\definecolor{darkgreen}{rgb}{0.,0.5,0.0}
\definecolor{darkblue}{rgb}{0.,0.0,0.5}
\newcommand{\ul}[1]{\underline{#1}}
\newcommand{\uld}[1]{\underline{d#1}}
\newcommand{\skipsimple}[1]{}
\newcommand{\kspace}[1]{{#1}}
\newcommand{\allximgDzero}[1]{}
\newcommand\xred{\check{x}}
\newcommand\Fred{\check{F}}
\renewcommand\Re{\mathrm{Re}}
\renewcommand\Im{\mathrm{Im}}
\def\Mpp{M_{\perp\!\!\!\!\!\bigcirc}}
\def\Mpptil{\tilde{M}_{\perp\!\!\!\!\!\bigcirc}}
\def\Mphp{\widehat{\Mpp}}
\def\pp{p_{\perp\!\!\!\!\!\bigcirc}}
\def\pptil{\tilde{p}_{\perp\!\!\!\!\!\bigcirc}}
\def\uldMpp{\uld{M}_{\perp\!\!\!\!\!\bigcirc}}
\def\ulddMpp{\uld{dM}_{\perp\!\!\!\!\!\bigcirc}}
\def\yp{y_{\perp\!\!\!\!\!\bigcirc}}
\def\fpp{f_{\perp\!\!\!\!\!\bigcirc}}
\def\bb{b}
\newcommand\Xpar{\mathbb{X}_{\text{par}}}
\newcommand\taup{\varepsilon}
\newcommand\tauone{\tau^{(II)}}
\newcommand\tautwo{\tau^{(III)}}
\newcommand\tII{t_{II}}
\newcommand\tIII{t_{III}}
\newcommand{\Dterm}[1]{\|D_0^{1/2}{#1}\|^2}
\definecolor{darkgreen}{rgb}{0.0,0.7,0.0}
\newcommand{\eexp}[1]{\text{exp}\left(#1\right)}
\newcommand{\revision}[1]{{ #1}}
\newcommand{\revisionown}[1]{{ #1}}
\newcommand{\Margin}[1]{}
\begin{document}

\markboth{Barbara Kaltenbacher}{Uniqueness of coefficient identification in the Bloch-Torrey equation}

%
\catchline{}{}{}{}{}
%
\title{On uniqueness of coefficient identification in the Bloch-Torrey equation for magnetic resonance imaging}
\author{Barbara Kaltenbacher}
\address{Department of Mathematics, University of Klagenfurt, Universit\"atsstra\ss e 65-67, 9020 Klagenfurt, Austria\\
barbara.kaltenbacher@aau.at}

\maketitle

\begin{history}
\received{(Day Month Year)}
\revised{(Day Month Year)}
\comby{(xxxxxxxxxx)}
\end{history}

\begin{abstract}
In this paper we provide some uniqueness results for the (multi-)coefficient identification problem of reconstructing the spatially varying spin density as well as the spin-lattice and spin-spin relaxation times and the local field inhomogeneity in the Bloch-Torrey equation, 
as relevant in magnetic resonance imaging MRI. To this end, we follow two approaches: (a) Relying on sampling of the k-space and (approximately) explicit reconstruction formulas in the simplified (Bloch) ODE setting, along with perturbation estimates; (b) Relying on infinite speed of propagation due to diffusion. The results on well-posedness and Lipschitz continuous differentiability of the coefficient-to-state map derived for this purpose, are expected to be useful also in the convergence analysis of reconstruction schemes as well in mathematical optimization of the experimental design in MRI.   
\end{abstract}

\keywords{Magnetic resonance imaging, Bloch-Torrey equation, multicoefficient identification, uniqueness.}

\ccode{AMS Subject Classification: 35R30, 35K20, 95C55}

\section{Introduction}

Whereas MRI is conventionally viewed as reconstruction from a sampled version of the Fourier transform, based on model simplifications, we here view it as an inverse problem of coefficient identification in a partial differential equation PDE -- the Bloch-Torrey equation, whose  ODE version for vanishing diffusion is the Bloch equation see, e.g., \cite{bloch46,torrey56,Nishimura2010}.
This approach is known as model based reconstruction in the physics and engineering literature, see \cite{Uecker_Review:2021} and the references therein.

The fundamental quantity to be identified in MRI is the spin density, considered as a spatially varying function on the space domain $\Omega$ where the Bloch-Torrey PDE holds. Quite often, also the spin-lattice and spin-spin relaxation times are reconstructed, in view of their additional diagnostic value.  
These (and additionally the field inhomogeneity, which can be viewed as the imaginary part of an effective  reciprocal spin-spin relaxation time) are the unknowns in the inverse problem studied also here, which, in the above mentioned PDE context, amounts to multi-coefficient identification.
On top of these imaging quantities, 
our long term goal is to also study
reconstruction of the diffusion tensor and the velocity in a convective term, that is additionally incorporated in the Bloch-Torrey equation to model motion or flow.

Even in the idealized setting of reconstruction from a sampled Fourier transform, due to the fact that sampling takes place along a smooth curve rather than all of the three dimensional frequency space, uniqueness of the spin density (and other coefficients) cannot be expected to hold in all of $L^2(\Omega)$ but in a (possibly still infinite dimensional) subspace $\Xpar$.
Consistently with intuition, the more exhaustive the sampling is done, the larger will be the subspace $\Xpar$ on which uniqueness holds.

With this in mind, we derive two types of uniqueness results here
\begin{itemize}
\item Relying on smallness of diffusion, we consider the Bloch-Torrey PDE model as a perturbation of the Bloch ODE, for which (approximate) solution formulas result in a well-posed inverse problem. Substantiating this perturbation argument in appropriate function spaces, we prove well-posedness of the nonlinear PDE inverse problem of reconstructing spin density 
\allximgDzero{and relaxation times} 
and spin-lattice relaxation time 
for sufficiently close to constant 
(with respect to the $L^\infty(\Omega)$ topology) diffusion.
\revision{The challenge here lies in the transition from a pointwise ODE to a parabolic PDE; we achieve this by considering the problem in Fourier domain.}
As a by-product we obtain local quadratic convergence of Newton's method for the multicoefficient identification problem.
\item In case of possibly larger diffusion, relying on infinite speed of propagation, rather than maximizing coverage of the sampling procedure, we extend \revision{ recently developed } methodology for uniqueness in backwards/sideways parabolic problems, cf.~\cite{JiangLiPauronYamamoto2023}
to the Bloch-Torrey equation setting, to prove linearized uniqueness of spin density and relaxation times. 
The uniqueness results obtained there come without stability estimates, as to be expected in view of the severe ill-posedness of backwards and sideways diffusion.   
\end{itemize}
\Margin{R1 0.}

In Remark~\ref{Kaczmarz90deg} we also indicate a possibility of combining both approaches, whose detailed investigation is subject of future research.

Clearly, the quality of reconstruction always depends on the choice of an appropriate excitation; in our present context, this is the radio frequency field which is subject to pulsed changes.
There is a small caveat on what the right function space setting is for describing such pulses: While considering delta distributions in the space $\mathcal{M}(0,T)$ of Radon measures appears to be both intuitive and advantageous due to the favorable properties of this space as the dual of the separable space $C(0,T)$, it turns out that the PDE estimates on which our perturbation results rely, can only handle $L^1(0,T)$ excitations. 
Thus we stay with rectangular pulses that are in fact quite common in the MR literature.
In fact, the uniqueness results we obtain here rely on elementary ``textbook'' pulse sequences, that is, they do not require a sophisticated choice of the excitation.
Thus we expect that -- starting from these fundamental uniqueness results -- there is ample room for improvement of reconstruction efficiency and quality by a combination of mathematical optimization with expert MRI knowledge.
\revision{This corresponds to an optimal control task to be addressed in future work.}
\Margin{R2 (2)}
\footnote{We plan to work on this in the collaborative research center MR-DYNAMO; 
see the funding acknowledgment.}

As an ingredient for the uniqueness proofs \revisionown{that form the core of this paper}, but also as a preparation for various  optimization tasks, as well as for convergence proofs of regularization methods for the inverse problem under consideration,
this paper also contains some well-posedness and regularity results for the forward problem of solving the convective Bloch-Torrey equation with initial and boundary conditions. 

\medskip

The remainder of this paper is organized as follows.
In Subsection~\ref{sec:setting} we describe the underlying model and introduce the (many) physical quantities involved.
Section~\ref{sec:forward} is devoted to results on the forward problem. 
The outcome of Section~\ref{sec:wellposed} allows us to establish well-definedness and continuous differentiability of the parameter-to-state map $\mathcal{S}$ in Section~\ref{subsec:S}.
In Section~\ref{sec:explsol} we derive explicit solution formulas for the Bloch ODE case with elementary rectangular radio frequency pulses and equip them with estimates of the error due to (a) finiteness of the pulse duration (b) neglection of diffusion.
This preparatory work allows us to prove the 
\revision{uniqueness}
\Margin{R2 1)} 
results under the two paradigms alluded to above in Subsections~\ref{sec:uniqueness_perturbation} and \ref{sec:uniqueness_diffusion} of Section~\ref{sec:uniqueness}. 
While these constitute the main outcome of this paper, as already mentioned, we also expect the other analytical results obtained in Section~\ref{sec:forward} to be useful for several purposes in the context of the underlying inverse problem. Vice versa, linearized uniqueness, as established in Subsection~\ref{sec:uniqueness_diffusion} will be a crucial ingredient for proving convergence of certain Newton type reconstruction methods.  

\medskip

\subsubsection*{Notation}
\begin{itemize}
\item (space-)constant versions of otherwise spatially variable quantities are often indicated by a subscript ${}_0$ here.
\item The usual Sobolev (Hilbert) spaces over a spatial domain $\Omega$ are denoted by $H^s(\Omega)$, $s\in(0,\infty)$; 
For describing time and space dependent functions, we will employ Bochner spaces such as 
$L^p(0,T;H^s(\Omega))$, 
$H^r(0,T;H^s(\Omega))$.
\item In most of this paper we will explicitly write out space and time dependence of functions. We are fully aware of the fact that this is not very common in PDE analysis papers; however, in the present context such an explicit distinction of dependencies appears to be essential for capturing the structure of the problem.
\item Even more unusual for a mathematical paper, we are using $\vec r$ for the space variable.
This is not only an adoption of engineering/physics notation, but also allows us to use the letter $x$ for unmistakably referring to the inverse problems solution.  
\end{itemize}

\subsection{Problem setting}\label{sec:setting}
The fundamental \textbf{PDE model} describing the physics underlying magnetic resonance imaging is the 
Bloch-Torrey equation for the magnetization $\vec M$,
see, e.g., \cite{bloch46,torrey56,Nishimura2010}.
We here consider a convective version 
\begin{equation}\label{eqn:bloch-torrey}
\begin{aligned}
    \frac{d}{dt} \vec M(t, \vec r) = \gamma \vec M(t, \vec r) \times  
\begin{pmatrix} B^1_{\mathsf{x}}(t, \vec r) \\ B^1_{\mathsf{y}}(t, \vec r) \\ B^1_{\mathsf{z}}(t, \vec r)  \end{pmatrix} 
    +  \begin{pmatrix} \frac{-M_{\mathsf{x}}(t, \vec r)}{T_2(\vec r)} \\ \frac{-M_{\mathsf{y}}(t, \vec r)}{T_2(\vec r)} \\ \frac{M^{eq}(\vec r) - M_{\mathsf{z}}(t, \vec r)}{T_1(\vec r)} \end{pmatrix}\\
    +\nabla\cdot\Bigl(D(\vec r) \, \nabla \vec M(t, \vec r)\Bigr) 
    -(\vec v(t,\vec r)\cdot\nabla)\vec M(t,\vec r)
\end{aligned}
\end{equation}
containing a velocity term $(\vec v(t,\vec r)\cdot\nabla)\vec M(t,\vec r)$ that allows to describe mechanical displacements (e.g., due to patient motion or blood flow) inside the imaged object.
In \eqref{eqn:bloch-torrey}, the first term on the right hand side describes precession driven by an imposed magnetic field with flux density $B^1(t, \vec r)$, and the second one relaxation towards the equilibrium magnetization, that is conventionally assumed to be aligned in $z$-direction. 
Diffusion -- which distinguishes the Bloch-Torrey PDE from the Bloch ODE model -- is possibly anisotropic but typically acts the same way on all components of the magnetization 
\[
\begin{aligned}
\nabla\cdot\Bigl(D \, \nabla \vec M)&:=\Bigl(\nabla\cdot\Bigl(D \, \nabla M_j)\Bigr)_{j\in\{1,2,3\}}
\end{aligned}
\]
with a symmetric positive definite diffusion tensor $D:\mathbb{R}^3\mapsto \mathbb{R}^{3\times 3}$.
Further quantities appearing in \eqref{eqn:bloch-torrey} are
\Margin{R2 2)}
\begin{equation}\label{Meq-rho}
\begin{aligned}
B^1_{\mathsf{z}}(t, \vec r) =&B^0 + \delta B^0(\vec r) +   \vec r \cdot \vec G(t)\hspace*{-4cm}&& \\
\text{\revision{where }}&B^0 \quad&& \text{\ldots static magnetic field}\\
&\delta B^0(\vec r)\quad&& \text{\ldots local field inhomogeneity}\\
&\vec G(t)\quad&& \text{\ldots field gradient}\\
M^{eq}(\vec r)=& \rho(\vec r)\frac{\gamma^2 \hbar^2}{4k_B\vartheta}B^0  && \text{\ldots equilibrium magnetization}\\
\text{\revision{where }}&\rho(\vec r) \quad&& \text{\ldots spin density}\\
&\gamma  \quad&& \text{\ldots gyromagnetic ratio}\\
&\hbar  \quad&& \text{\ldots reduced Planck constant}\\
&k_B  \quad&& \text{\ldots Boltzmann constant}\\
&\vartheta  \quad&& \text{\ldots temperature}\\
T_1(\vec r)\quad& \quad &&\text{\ldots spin-lattice relaxation time } \\
T_2(\vec r)\quad& \quad &&\text{\ldots spin-spin relaxation time } 
\end{aligned}
\end{equation}
The latter usually satisfy $T_2(\vec r)\leq T_1(\vec r)$, 
cf., e.g., \cite[pages 10-11]{Uecker2009NonlinearRM}. 

Partition of the magnetization vector $\vec M$ into a longitudinal $M_z$ and a transversal $M_{\perp}$ component is notationally conveniently captured by identification of the $x-y$ plane with the complex plane, considering $M_\perp$ and the transversal part of
$B^1$ to be complex valued, 
\[
M_{\perp}(t, \vec r) := M_{\mathsf{x}}(t, \vec r) + i M_{\mathsf{y}}(t, \vec r), 
\]
while all other quantities stay real valued. 
Moreover, the local field typically has the space-time separable structure
\[
\begin{aligned}
&\begin{pmatrix} B^1_{\mathsf{x}}(t, \vec r) \\ B^1_{\mathsf{y}}(t, \vec r)\end{pmatrix}
= \begin{pmatrix} (c^+(\vec r) p(t))_{\mathsf{x}}\\(c^+(\vec r) p(t))_{\mathsf{y}}\end{pmatrix}
= \begin{pmatrix} \Re(c^+(\vec r) p(t))\\\Im(c^+(\vec r) p(t))\end{pmatrix}
\end{aligned}
\]
where both $c^+$ and $p$ are complex valued
\[
\begin{aligned}
&p(t) := p_{\mathsf{x}}(t) + i p_{\mathsf{y}}(t) \quad&& \text{\ldots radio frequency RF field}\\
&c^+(\vec r) := c^+_{\mathsf{x}}(\vec r) + i c^+_{\mathsf{y}}(\vec r) \quad&& \text{\ldots spatial
dependency of the transmitted field. 
}
\end{aligned}
\]

Using the material derivative notation
\[
\frac{D}{Dt} f(t, \vec r) =\frac{d}{dt} f(t, \vec r) + \vec v(t,\vec r)\cdot\nabla f(t,\vec r)
\]
as well as the abbreviations 
\begin{equation}\label{R2star}
\begin{aligned}
&R_1(\vec r)=\frac{1}{T_1(\vec r)}, \quad R_2(\vec r)=\frac{1}{T_2(\vec r)}&&\text{\ldots reciprocals of relaxation times}\\
&\omega_0=\gamma B^0 &&\text{\ldots Larmor frequency, }\\
&R_2^*(\vec r):=R_2(\vec r)+i\gamma\,\delta B^0 (\vec r) 
&&\text{\ldots reciprocal of effective}\\ 
&&&\text{\phantom{\ldots} spin-spin relaxation time }
\end{aligned}
\end{equation}
we can rewrite \eqref{eqn:bloch-torrey} as\footnote{$\Re(a+ib)=a$ denotes the real part, $\overline{a+ib}=a-ib$ the complex conjugate of a complex number $a+ib$.}
\begin{equation}\label{eqn:bloch-torrey-complex}
\begin{aligned}
&\frac{D}{Dt}M_\perp(t, \vec r) -\nabla\cdot\bigl(D(\vec r) \nabla M_\perp(t, \vec r)\bigr) +\bigl(R_2^*(\vec r)+i\omega_0  + i\gamma\vec r \cdot \vec G(t)\bigr)M_\perp(t, \vec r)\\
&\qquad-i\gamma M_z(t, \vec r)c^+(\vec r)p(t)=0\\    
&\frac{D}{Dt}M_z(t, \vec r) -\nabla\cdot\bigl(D(\vec r) \nabla M_z(t, \vec r)\bigr) +R_1(\vec r) (M_z(t, \vec r)-M^{eq}(\vec r))
\\
&\qquad-\gamma\Re\bigl(iM_\perp(t, \vec r)\overline{c^+(\vec r)p(t)}\bigr)=0.
\end{aligned}
\end{equation}

To remove the high frequency factor, this system is usually considered in the rotating frame by defining 
\begin{equation}\label{Mperp_prime}
\Mpp(t, \vec r)=e^{i\omega_0t}M_\perp(t, \vec r)
\end{equation}
which upon multiplication of the transversal part of \eqref{eqn:bloch-torrey-complex} with $e^{i\omega_0t}$, 
\Margin{R2 3)}
\revision{under the commonly made assumption} that 
$p(t)$ takes the form $p(t)=e^{-i\omega_0t}\pp(t)$, 
\revisionown{and re-defining $\frac{D}{Dt}=\frac{d}{dt} + \widetilde{\vec v}\cdot\nabla$ where $\widetilde{\vec v}(t, \vec r)=(e^{i\omega_0t}v_\perp(t, \vec r),\,v_z(t, \vec r))$,} 
satisfies
\begin{equation}\label{eqn:bloch-torrey-complex-rotated}
\begin{aligned}
&\frac{D}{Dt}\Mpp(t, \vec r) -\nabla\cdot\bigl(D(\vec r) \nabla \Mpp(t, \vec r)\bigr) +\bigl(R_2^*(\vec r) + i\gamma\vec r \cdot \vec G(t)\bigr)\Mpp(t, \vec r)\\
&\qquad-i\gamma  M_z(t, \vec r) \, c^+(\vec r)\pp(t)
=0
\\    
&\frac{D}{Dt}M_z(t, \vec r) -\nabla\cdot\bigl(D(\vec r) \nabla M_z(t, \vec r)\bigr) 
+R_1(\vec r) (M_z(t, \vec r)-M^{eq}(\vec r))
\\
&\qquad-\gamma\Re\bigl(i \Mpp(t, \vec r)\overline{c^+(\vec r)\pp(t)}\bigr)
=0.
\end{aligned}
\end{equation}
Commonly used envelope functions are 
\begin{itemize}
\item continuous $\pp(t)\equiv {\pp}_0$
\item rectangular pulse $\pp(t)={\pp}_0\,\tau_p^{-1} 1\!\mathrm{I}_{[t_p,t_p+\tau_p]}(t)$
\item sinc pulse $\pp(t)={\pp}_0\,\frac{\sin(t/\tau_p)}{t}$
\end{itemize}
cf. e.g., \cite[page 10]{Uecker2009NonlinearRM}, 
and we will particularly rely on rectangular pulses here.
To emphasize smallness of the pulse duration, we will denote it by $\taup$ in the following.

The PDE models described above are parabolic -- potentially 
\revision{degenerate}
\Margin{R2 4)} 
in case of locally vanishing diffusion -- and are assumed to hold on a space-time domain $(0,T)\times\Omega\subseteq\mathbb{R}^3$ equipped with initial and boundary (in case of bounded $\Omega$) conditions.

In the (space-)constant coefficient case 
\begin{equation}\label{fullspace}
\vec v(t,\vec r)=\vec v_0(t)
, \quad 
R_1(\vec r)=R_{1,0}
, \quad 
R_2^*(\vec r)=R_{2,0}^*
, \quad
c^+(\vec r)=c^+_0,
\qquad\Omega=\mathbb{R}^3,
\end{equation}
extending by zero to all of $\mathbb{R}^3$, (cf. Remark~\ref{rem:transform_kspace} below,) 
we can take the Fourier transform with respect to space 
\begin{equation}\label{Fouriertransf}
(\mathcal{F}v)(\vec \xi)=\int_{\mathbb{R}^3} v(\vec r) \, e^{-2\pi i {\vec \xi}\cdot{\vec r}}  \,\textsf{d}\vec r
\end{equation}
and define $\Mphp(t,\vec\xi)=\mathcal{F}[\Mpp](t,\xi)$,
to obtain
\begin{equation}\label{eqn:bloch-torrey-complex_rot_FT}
\begin{aligned}
&\frac{d}{dt}\Mphp(t,\vec\xi) - \frac{\gamma}{2\pi}\,\vec G(t)\cdot \nabla_\xi \Mphp(t,\vec\xi)
+\bigl(R_{2,0}^*+\vec v_0(t)\cdot\xi+\xi\cdot D_0\xi \bigr)\Mphp(t,\vec\xi)\\
&\qquad-i\gamma \widehat{M_z}(t, \vec\xi)c^+_0 \pp(t)=0\\    
&\frac{d}{dt}\widehat{M_z}(t, \vec\xi) 
+\bigl(R_{1,0}+\vec v_0(t)\cdot\xi+\xi\cdot D_0\xi\bigr)\widehat{M_z}(t, \vec\xi)-R_{1,0}\widehat{M^{eq}}(\vec\xi))\\
&\qquad-\gamma\Re\bigl(i\Mphp(t, \vec\xi)\overline{c^+_0 \pp(t)}\bigr)=0.
\end{aligned}
\end{equation}
This is a variable coefficient linear transport equation (that is, a linear first order PDE) that can be solved by the method of characteristics, see Section~\ref{sec:explsol} below.
As common in the physics and engineering literature, we adopt the term ``k-space'' for the frequency domain with respect to space and use the $\vec r$ notation for the space variable (so that the letter $x$ will be available for the inverse problem solution).

The above PDE models contain the following \textbf{imaging quantities} as space-dependent coefficients
\begin{itemize}
\item spin density $\rho(\vec r)$; or equivalently, \footnote{since $\frac{\gamma^2 h^2}{4k\vartheta}B^0$ in \eqref{Meq-rho} is known,}
equilibrium magnetization $M^{eq}(\vec r)$; 
\item spin-lattice / spin-spin relaxation times $T_1(\vec r)$ / $T_2(\vec r)$ or equivalently, their reciprocals 
$R_1(\vec r)=\frac{1}{T_1(\vec r)}$, $R_2(\vec r)=\frac{1}{T_2(\vec r)}$;
\item field inhomogeneity $\delta B^0(\vec r)$ ;
\item diffusion 
tensor $D(\vec r)$.
\end{itemize}
Note that we have combined the real valued coefficients $R_2$ and $\delta B^0$ into one complex valued coefficient $R_2^*$, cf. \eqref{R2star}. 

The \textbf{observations} available for gaining information about these coefficients are the induced voltages according to Faraday's law, that are taken in parallel at $N$ coils   
\begin{equation}\label{eqn:mri_measurement}
    y_j(t) = \int_{\mathbb{R}^3} c^-_j(\vec r) M_{\perp}(t, \vec r) \,\textsf{d}\vec r 
= \mathcal{F}[c^-_j M_{\perp}(t)](0)   
 ,\qquad j=1,\ldots N
\end{equation}
with
\[
\begin{aligned}
&y_j(t)\quad&&\text{\ldots induced voltage at $j$-th coil}\\
&c^-_j(\vec r) \quad&&\text{\ldots coil sensitivity of $j$-th coil.}
\end{aligned}
\]
In order to make sense of the above integral in case of $\Omega$ being a strict subset of $\mathbb{R}^3$, we assume
\begin{equation}\label{supp_cj-}
c_j^-(\vec r)=0, \quad \vec r \in \mathbb{R}^3\setminus\Omega
\end{equation}
or equivalently, extend $M_\perp$ by zero (cf. Remark~\ref{rem:transform_kspace} below).
In terms of the solution to \eqref{eqn:bloch-torrey-complex_rot_FT}, we can write \eqref{eqn:mri_measurement} as
\begin{equation}\label{eqn:mri_measurement_Fourier}
y_j(t)= e^{-i\omega_0t}\,(\mathcal{F}[c^-_j]*\Mphp(t))(0)
=e^{-i\omega_0t}\,\langle \overline{\mathcal{F}[c^-_j]},\Mphp(t)\rangle_{L^2(\mathbb{R}^3)},
\end{equation}
where $*$ denotes convolution and the last identity holds due to Plancherel's Theorem; in case of constant coil sensitivities, this becomes
\begin{equation}\label{eqn:mri_measurement_Fourier_cj0}
y_j(t)=c^-_{j,0}\,e^{-i\omega_0t}\,\Mphp(t,0)
,\qquad j=1,\ldots N.
\end{equation}

In this paper we focus on identifiability of $M^{eq}(\vec r)$, $R_1(\vec r)$ and  $R_2^*(\vec r)$ in \eqref{eqn:bloch-torrey-complex-rotated} from the observations \eqref{eqn:mri_measurement}, that is, of the spin density $\rho(\vec r)$, the relaxation times $T_1(\vec r)$, $T_2(\vec r)$, and the field inhomogeneity $\delta B^0(\vec r)$. 

Further coefficients that can be considered as unknowns in an inverse problem for \eqref{eqn:bloch-torrey-complex-rotated} are the diffusion tensor $D(\vec r)$ as another imaging quantity, as well as the hardware parameters 
$c^+(\vec r)$, 
$c^-_j(\vec r)$, 
$j=1,\ldots N$
cf., e.g. \cite{UeckerHohageBlockFrahm2008}, and
the advection velocity $\vec v(t,\vec r)$.
\revision{While we consider these quantities to be known here (for example the hardware parameters have been determined by some a priori testing and calibration procedure) their reconstruction and in particular also uniqueness will be subject of future research.}
\Margin{R2 (2)}
\Margin{R2 5)}

The time dependent 
radio frequency RF field $p(t)$ and 
field gradient $\vec G(t)$
can be regarded as controls
cf., e.g., \cite{AignerClasonRundStollberger2016}
to optimize the data acquisition process in MRI.
\revision{This amounts to a task of optimal experimental design and will be subject of future research as well.}
\Margin{R2 (2)}

Conventional MRI imaging usually relies on simplification $\vec v=0$, $D=0$ of \eqref{eqn:bloch-torrey} to a pointwise-in-space ODE model, the Bloch equation, which in the rotated frame reads as
\begin{equation}\label{bloch-rotated}
\begin{aligned}
&\frac{d}{dt}\Mpp(t, \vec r) +\bigl(R_2^*(\vec r) + i\gamma\vec r \cdot \vec G(t)\bigr)\Mpp(t, \vec r)
-i\gamma  M_z(t, \vec r) \, c^+(\vec r)\pp(t)
=0
\\    
&\frac{d}{dt}M_z(t, \vec r) +R_1(\vec r) (M_z(t, \vec r)-M^{eq}(\vec r))
-\gamma\Re\bigl(i \Mpp(t, \vec r)\overline{c^+(\vec r)\pp(t)}\bigr)
=0.
\end{aligned}
\end{equation}
Explicitly solving this family of ODEs under appropriate initial conditions and assuming approximately constant $R_2^*\approx R_{2,0}^*$ yields the central formula underlying the interpretation of MRI as reconstruction from a sampled Fourier transform: 
\begin{equation}\label{MR-Fourier_rect_B_intro} 
-i\,e^{R_{2,0}^*(t-\taup)+ i\omega_0t} \, y_j(t) 
\approx (\mathcal{F}[c^-_j\,M^{eq}])({\vec k}(t))
,\qquad j=1,\ldots N
\end{equation}
with 
\begin{equation}\label{kt_intro}
{\vec k}(t)=\frac{\gamma}{2\pi}\int_0^t \vec G(\tau)\, d\tau 
\qquad\text{\ldots k-space trajectory}.
\end{equation}
Its derivation can be found in many references. For the convenience of the reader, and since it will play a role in the uniqueness analysis here, we provide it in \eqref{MR-Fourier_rect_B} below.

\medskip

\Margin{R2 (1)}
\revision{
The various levels of models described in this section will play their role in this paper as follows. Well-posedness of the forward problem (Proposition~\ref{prop:wellposed}) and Lipschitz continuous differentiability of the resulting parameter-to-state map $\mathcal{S}:(M^{eq},R_1,R_2,\delta B^0,D,\vec v,c^+,\vec G,\pp)\to \vec{M}$ (Propositions~\ref{prop:perturb_gen}, \ref{prop:perturb_gen_deriv}) will be shown in the most general setting of the convective Bloch-Torrey PDE \eqref{eqn:bloch-torrey}.
To derive approximate explicit reconstruction formulas, on the other hand we consider the most simple of these models, the Bloch ODE \eqref{bloch-rotated} in Subsection~\ref{sec:explsol}; the approximation is quantified in Propositions~\ref{prop:perturb_expl-sol}, \ref{prop:perturb_expl-sol_lin}.
For our uniqueness results, we return to the general convective Bloch Torrey PDE \eqref{eqn:bloch-torrey} and prove uniqueness of $(M^{eq},R_1)$ by a perturbation argument in Theorem~\ref{thm:uniqueness_perturb_red}, as well as linearized uniqueness of $(M^{eq},R_1,R_2,\delta B^0)$ by an infinite speed of propagation argument in Theorem~\ref{thm:uniqueness_diff}. While we assume $\vec v=0$ for the sake of transparency  in the proof of Theorem~\ref{thm:uniqueness_diff}, we expect the result to extend to the convective setting, at the expense of some additional technicalities.}

\section{The forward problem}\label{sec:forward}

In this section we provide some results on well-posedness of the forward model \eqref{eqn:bloch-torrey-complex-rotated} with initial and boundary conditions. In order to emphasize the space-time separable structure of certain terms, we notationally keep track of the independent variables $t$ and $\vec r$.

\subsection{Well-posedness of the PDE for given parameters}\label{sec:wellposed}
We consider the coupled linear system
\begin{equation}\label{eqn:bloch-torrey-complex_f}
\begin{aligned}
&\frac{d}{dt}M_\perp(t, \vec r)+{\vec v}(t, \vec r)\cdot\nabla M_\perp(t, \vec r) -\nabla\cdot\bigl(D(\vec r) \nabla M_\perp(t, \vec r)\bigr) \\
&+\bigl(R_2(\vec r)+i(\omega_0 + \gamma\delta B^0(\vec r) + \gamma\vec r \cdot \vec G(t))
\bigr)M_\perp(t, \vec r)-i\gamma M_z(t, \vec r)c^+(\vec r)p(t)=f_\perp(t, \vec r)\\    
&\frac{d}{dt}M_z(t, \vec r)+{\vec v}(t, \vec r)\cdot\nabla M_z(t, \vec r) -\nabla\cdot\bigl(D(\vec r) \nabla M_z(t, \vec r)\bigr) \\
&+R_1(\vec r) M_z(t, \vec r)
-\gamma\Re\bigl(iM_\perp(t, \vec r)\overline{c^+(\vec r)p(t)}\bigr)=f_z(t, \vec r)
\\
&t>0, \quad \vec r\in\Omega,
\end{aligned}
\end{equation}
cf. \eqref{eqn:bloch-torrey-complex}.
Introducing source terms \revision{$\vec{f}=(f_\perp,f_z)$} will allow us to study differentiability of the parameter-to-state map, as relevant for the inverse problem.

Throughout this \revision{subsection} 
\Margin{R1 1.}
we will assume the PDEs to hold on a bounded Lipschitz domain $\Omega$ and impose  homogeneous Dirichlet or impedance boundary conditions
\begin{equation}\label{bc}
\begin{aligned}
&M_\perp=0, \quad M_z=0 \quad \text{ on }\Gamma_D, \\
&D\partial_\nu M_\perp+\beta_\perp M_\perp =0, \ 
D \partial_\nu M_z +\beta_z M_z =0 \quad \text{ on } \Gamma_I\\
&\Gamma_D\cap\Gamma_I=\emptyset, \quad \Gamma_D\cup\Gamma_I=\partial\Omega
\end{aligned}
\end{equation}
with nonnegative functions $\beta_\perp$, $\beta_z$ $\in L^\infty(\Gamma_I,\mathbb{R}_0^+)$, which with $\beta_\perp=0$, $\beta_z=0$ also includes the Neumann boundary condition case.

Well-posedness in case of uniformly positive definite diffusion tensor follows by extending standard parabolic theory (see, e.g., \cite[Section 7.1]{EvansBook}) to the vectorial case. 
We here provide a well-posedness proof that works in a potentially degenerate diffusion setting, exploits the structure of the PDE, in particular the skew symmetric coupling, and imposes a condition that allows for large solenoidal components of the velocity 
\begin{equation}\label{divv}
{\vec v}={\vec v}_0+\delta {\vec v}\text{ such that }\nabla\cdot{\vec v}_0\leq0\text{ a.e. in }(0,T)\times\Omega, \quad \|\nabla\cdot\delta{\vec v}\|_{L^1(0,T;L^\infty(\Omega))}<1. 
\end{equation}
We use the abbreviation $R_2^*(\vec r):=R_2(\vec r)+i\gamma\,\delta B^0 (\vec r)$ cf. \eqref{R2star} and without loss of generality  consider the rotated system
\begin{equation}\label{eqn:bloch-torrey-complex_rot_f}
\begin{aligned}
&\frac{d}{dt}\Mpp(t, \vec r)+{\vec v}(t, \vec r)\cdot\nabla \Mpp(t, \vec r) -\nabla\cdot\bigl(D(\vec r) \nabla \Mpp(t, \vec r)\bigr) \\
&+\bigl(R_2^*(\vec r)+i\gamma\vec r \cdot \vec G(t)\bigr)\Mpp(t, \vec r)
-i\gamma M_z(t, \vec r)c^+(\vec r)\pp(t)= \fpp(t, \vec r)\\    
&\frac{d}{dt}M_z(t, \vec r)+{\vec v}(t, \vec r)\cdot\nabla M_z(t, \vec r) -\nabla\cdot\bigl(D(\vec r) \nabla M_z(t, \vec r)\bigr) \\
&+R_1(\vec r) M_z(t, \vec r)
-\gamma\Re\bigl(i\Mpp(t, \vec r)\overline{c^+(\vec r)\pp(t)}\bigr)=f_z(t, \vec r)
\end{aligned}
\end{equation}

The energy space $H_D^1(\Omega;\mathbb{C}\times\mathbb{R})$ is defined as the closure of $C_c^\infty(\Omega;\mathbb{C}\times\mathbb{R})$ with respect to the topology defined by the inner product
\begin{equation}\label{HD1}
\begin{aligned}
&\langle (v_1,w_1),(v_2,w_2)\rangle_{H_D^1} :=
\int_\Omega\Bigl\{ \Re\Bigl((D(\vec r) \nabla v_1(\vec r))\cdot \overline{\nabla v_2(\vec r)}
+R_2(\vec r)\,v_1(\vec r))\, \overline{v_2(\vec r)}\Bigr)\\    
&\phantom{\langle (v_1,w_1),(v_2,w_2)\rangle := \int_\Omega\Bigl(}
+(D(\vec r) \nabla w_1(\vec r))\cdot \nabla w_2(\vec r)
+R_1(\vec r) w_1(\vec r)w_2(\vec r)\Bigr\}\, d \vec r\\
&\phantom{\langle (v_1,w_1),(v_2,w_2)\rangle :=}
+\int_{\Gamma_I}\Bigl\{\Re\Bigl(\beta_\perp(\vec r)\,v_1(\vec r))\, \overline{v_2(\vec r)}\Bigr)
+\beta_z(\vec r)\,w_1(\vec r))\, w_2(\vec r)\Bigr\}\, dS(\vec r).
\end{aligned}
\end{equation}
For this purpose it suffices to assume that $D$ is pointwise nonnegative definite (allowing for degenerate diffusion) and that $R_1$, $R_2=\Re(R_2^*)$ are positive bounded away from zero on $\Omega$
\begin{equation}\label{definiteness}
\begin{aligned}
&D(\vec r)\text{ nonnegative definite }, \quad 
R_1(\vec r)\geq \underline{R}_1>0, \quad
R_2(\vec r)=\Re(R_2^*(\vec r))\geq \underline{R}_2>0\\
&\vec r\in\Omega,
\end{aligned}
\end{equation}
considering $\mathbb{C}\times\mathbb{R}$ as a real Hilbert space with inner product
$\langle (a_1,b_1),(a_2,b_2)\rangle =\Re(a_1\,\overline{a_2}+b_1\,b_2)$ that is just the real Euclidean inner product between $(\Re(a_1),\Im(a_1),b_1)$ and $(\Re(a_2),\Im(a_2),b_2)$.

\begin{proposition}\label{prop:wellposed}
Let $T\in(0,\infty]$, $\gamma\in\mathbb{R}$, 
$R_1\in L^\infty(\Omega;\mathbb{R})$, $R_2^*,\, c^+\in L^\infty(\Omega;\mathbb{C})$, 
$\vec v\in L^1(0,T;W^{1,\infty}(\Omega;\mathbb{R}^3))$, 
$D\in L^\infty(\Omega;\mathbb{R}^{3\times 3})$, 
satisfying \eqref{divv}, \eqref{definiteness},
$\vec G\in L^1(0,T;\mathbb{R}^3)$, 
$\pp\in L^1(0,T;\mathbb{C})$, 
and 
\Margin{R2 6)}
$\revision{\vec{f}=(f_\perp,f_z)}\in L^1(0,T;L^2(\Omega;\mathbb{C}\times\mathbb{R}))+L^2(0,T;H_D^1(\Omega;\mathbb{C}\times\mathbb{R})^*)$.\footnote{Here ${}^*$ denotes the topological dual, for the definition of the space $H_D^1(\Omega)$ we point to \eqref{HD1}, and $f\in W_1+W_2$ means $f=f_1+f_2$ for some $f_1\in W_1$, $f_2\in W_2$.} 

Then the system 
\eqref{eqn:bloch-torrey-complex_rot_f}
has a unique weak solution
\begin{equation}\label{V_wellposed}
(\Mpp,M_z)\in \mathbb{V}:=L^\infty(0,T;L^2(\Omega;\mathbb{C}\times\mathbb{R}))\cap L^2(0,T;H_D^1(\Omega;\mathbb{C}\times\mathbb{R}))
\end{equation}
and the estimate 
\begin{equation}\label{enest_thm}
\begin{aligned}
&\|(\Mpp,M_z)\|_{L^q(0,T;L^2(\Omega))}
+\|(\Mpp,M_z)\|_{L^2(0,T;H_D^1(\Omega))}\\
&\leq C\Bigl(\|(\Mpp,M_z)(0, \cdot)\|_{L^2(\Omega)} + \min\{\|\revision{\vec{f}}\|_{L^1(0,T;L^2(\Omega))}, \,  \|\revision{\vec{f}}\|_{L^2(0,T;H_D^1(\Omega;\mathbb{C}\times\mathbb{R})^*)}\}\Bigr)
\end{aligned}
\end{equation}
holds for any $q\in[2,\infty]$ with a constant 
$C$ depending only on $q$ and $1-\|\nabla\cdot\delta{\vec v}\|_{L^1(0,T;L^\infty(\Omega))}$,
but not on $T$ nor the data $(\Mpp,M_z)(0, \cdot)$, $\revision{\vec{f}}$.

If 
$\revision{\vec{f}}\in L^2(0,T;H_D^1(\Omega;\mathbb{C}\times\mathbb{R})^*)$, 
$D^{-1/2}{\vec v}\in L^\infty(0,T;L^\infty(\Omega;\mathbb{R}^{3\times 3})$, 
$\vec G\in L^2(0,T;\mathbb{R}^3)$,
$\pp\in L^2(0,T;\mathbb{C})$, 
then additionally $(\Mpp,M_z)\in H^1(0,T;H_D^1(\Omega;\mathbb{C}\times\mathbb{R})^*)$ holds.
\end{proposition}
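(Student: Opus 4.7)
The plan is a Faedo--Galerkin construction in a basis of $H_D^1(\Omega;\mathbb{C}\times\mathbb{R})$, with the passage to the limit powered by a single \emph{a priori} energy estimate. Testing \eqref{eqn:bloch-torrey-complex_rot_f} against $(\overline{\Mpp},M_z)$ and taking the real part, the pivotal observation is that the inter-equation coupling is skew-symmetric in the real inner product on $\mathbb{C}\times\mathbb{R}$: the contribution of $-i\gamma M_z c^+\pp$ paired with $\overline{\Mpp}$ cancels that of $-\gamma\Re(i\Mpp\overline{c^+\pp})$ paired with $M_z$, and the rotational term $i\gamma\vec r\cdot\vec G(t)\Mpp$ paired with $\overline{\Mpp}$ is purely imaginary and so vanishes upon taking $\Re$. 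Consequently, neither $\pp$ nor $\vec G$ --- both only $L^1$ in time --- enter the resulting energy identity, which is precisely why $L^1$-regularity of the control data suffices for the first half of the proposition.

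The convective term contributes, after integration by parts, the factor $-\tfrac12\int(\nabla\cdot\vec v)|\cdot|^2$; I split $\vec v=\vec v_0+\delta\vec v$ according to \eqref{divv} so that the $\vec v_0$-part carries the favorable sign and the $\delta\vec v$-part is absorbed by a Gronwall argument with constant controlled by $(1-\|\nabla\cdot\delta\vec v\|_{L^1(0,T;L^\infty(\Omega))})^{-1}$. The diffusion and impedance boundary terms are nonnegative and reassemble, together with the coercive zero-order terms $R_1 M_z^2$ and $\Re(R_2^*)|\Mpp|^2$, into $\|\cdot\|_{H_D^1}^2$ by the definition of that norm. The source $f$ is handled in the two ways the statement suggests: for $f\in L^1(0,T;L^2(\Omega))$ by Cauchy--Schwarz against $(\overline{\Mpp},M_z)\in L^\infty(0,T;L^2(\Omega))$, and for $f\in L^2(0,T;H_D^1(\Omega)^*)$ by duality against $(\overline{\Mpp},M_z)\in L^2(0,T;H_D^1(\Omega))$ with Young's inequality to absorb a fraction of $\|\cdot\|_{L^2(H_D^1)}^2$ on the left. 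Gronwall's lemma then gives \eqref{enest_thm} for $q=\infty$ and, by interpolation with the $L^2$-estimate, for every $q\in[2,\infty]$, with a constant independent of $T$. Uniqueness follows from the same estimate applied to the difference of two solutions with vanishing data.

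For the additional regularity claim I read $\partial_t(\Mpp,M_z)$ off \eqref{eqn:bloch-torrey-complex_rot_f} and bound each summand in $H_D^1(\Omega;\mathbb{C}\times\mathbb{R})^*$. The diffusion, impedance and coercive zero-order contributions are bounded pointwise in $t$ by $\|(\Mpp,M_z)(t)\|_{H_D^1}$, which is $L^2$ in $t$ by the preceding estimate; the rotational and RF coupling terms are bounded by $|\vec G(t)|$, respectively $|\pp(t)|$, times the $L^\infty(0,T;L^2(\Omega))$-norm of the solution, and so are $L^2$ in $t$ thanks to the new $L^2$-in-time hypotheses on $\vec G$ and $\pp$. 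The convective term is the delicate one: under possibly degenerate $D$, $\nabla M$ need not lie in $L^2$, so I exploit the factorization $\vec v\cdot\nabla M=(D^{-1/2}\vec v)\cdot(D^{1/2}\nabla M)$ together with the new hypothesis $D^{-1/2}\vec v\in L^\infty(0,T;L^\infty(\Omega))$, which after pairing with a test $\varphi\in H_D^1(\Omega)$ yields the bound $\|D^{-1/2}\vec v\|_{L^\infty}\|M(t)\|_{H_D^1}\|\varphi\|_{L^2}$, i.e.\ the required membership in $L^2(0,T;H_D^1(\Omega)^*)$. The principal obstacle throughout is this interplay between degenerate diffusion and first-order transport: lacking any parabolic-regularity crutch on $\nabla M$, every occurrence of the convective term must be absorbed through the skew-symmetric cancellation, the divergence hypothesis \eqref{divv}, or --- in the regularity step --- the $D^{-1/2}\vec v$-hypothesis, and checking that these three mechanisms together suffice is the technical heart of the proof.
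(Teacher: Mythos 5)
Your proposal is correct and follows essentially the same route as the paper: Galerkin approximation with the energy identity in which the $\gamma$-coupling and rotational terms cancel, the splitting \eqref{divv} with absorption of the $\delta\vec v$ contribution (the paper absorbs it directly via the smallness condition rather than invoking Gronwall, a cosmetic difference), interpolation in time for general $q$, and the dual-norm bound on $\partial_t(\Mpp,M_z)$ using the factorization $\vec v\cdot\nabla M=(D^{-1/2}\vec v)\cdot(D^{1/2}\nabla M)$ together with the $L^2$-in-time hypotheses on $\vec G$ and $\pp$ and boundedness of $\Omega$ for the $\vec r\cdot\vec G$ term.
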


\begin{proof}
The proof does not require any compactness argument 
but yet requires boundedness of the domain because of the term $\vec r\cdot \vec G(t)$, that cannot be controlled by energy estimates otherwise, due to the fact that $\frac{\vec r}{|\vec r|}\cdot \vec G(t)$ does not have a definite sign.
 
It relies on Galerkin approximation,  
energy estimates and weak convergence, using the bilinear form 
\[
\begin{aligned}
&\mathcal{B}((v_1,w_1),(v_2,w_2),t)
:= 
\langle (v_1,w_1),(v_2,w_2)\rangle_{H_D^1}\\
&+\int_\Omega\Bigl\{\Re\Bigl( {\vec v}(t, \vec r)\cdot\nabla v_1(\vec r)
+i\bigl((\Im(R_2^*(\vec r))+\gamma\vec r \cdot \vec G(t))v_1(\vec r)
-\gamma w_1(\vec r)c^+(\vec r)\pp(t)\Bigr)\, \overline{v_2(\vec r)}\Bigr)\\    
&\phantom{:= \int_\Omega\Bigl\{}
+\Bigl({\vec v}(t, \vec r)\cdot\nabla w_1(\vec r)
-\gamma\Re\bigl(iv_1(\vec r)\overline{c^+(\vec r)\pp(t)}\bigr)\Bigr)w_2(\vec r)\Bigr\}\, d \vec r,
\end{aligned}
\]
on $H_D^1(\Omega;\mathbb{C}\times\mathbb{R})$.
 
The crucial energy identity emerges from inserting $v_1=v_2=\Mpp(t)$, $w_1=w_2=M_z(t)$ (or actually their Galerkin approximants)  and integrating with respect to time, which yields
\begin{equation}\label{enid}
\begin{aligned}
&\tfrac12\|\Mpp(t, \cdot)\|_{L^2(\Omega)}^2
+\int_0^t\Bigl(\|\sqrt{D}\nabla \Mpp(s,\cdot)\|_{L^2(\Omega)}^2+\|\sqrt{R_2}\Mpp(s,\cdot)\|_{L^2(\Omega)}^2\bigr)\, ds\\
&+\tfrac12\|M_z(t, \cdot)\|_{L^2(\Omega)}^2+\int_0^t\Bigl(\|\sqrt{D}\nabla M_z(s,\cdot)\|_{L^2(\Omega)}^2+\|\sqrt{R_1}M_z(s,\cdot)\|_{L^2(\Omega)}^2\bigr)\, ds\\
&-\frac12 \int_0^t\int_\Omega (\nabla\cdot{\vec v})(s, \vec r)\,(|\Mpp(s, \vec r)|^2+|M_z(s, \vec r)|^2)\, d \vec r\, ds\\
=& \tfrac12\|M_\perp(0, \cdot)\|_{L^2(\Omega)}^2+\tfrac12\|M_z(0, \cdot)\|_{L^2(\Omega)}^2\\
&+\int_0^t\int_\Omega\Bigl(\Re ( \fpp(s, \vec r) \cdot\overline{\Mpp(s, \vec r)})+f_z (s, \vec r) M_z(s, \vec r)\Bigr)\, d \vec r\, ds,
\end{aligned}
\end{equation}
where the terms multiplied with $\gamma$ cancel
\[
M_z(t,\vec r)\Im\bigl(c^+(\vec r)\pp(t)\, \overline{\Mpp(t,\vec r)}\bigr)
+\Re\bigl(i\Mpp(t,\vec r)\overline{c^+(\vec r)\pp(t)}\bigr)\Bigr)M_z(t,\vec r)=0,
\]
and we have applied integration by parts to the velocity terms 
\[
\begin{aligned}
&\int_\Omega {\vec v}(s, \vec r)\cdot \nabla m(s, \vec r)\, \overline{m(s, \vec r)}\, d \vec r
=\frac12\int_\Omega {\vec v}(s, \vec r)\cdot \nabla |m(s, \vec r)|^2\, d \vec r,\\
&m\in\{\Mpp,M_z\}.
\end{aligned}
\]

Note that the terms with purely imaginary factors of $\Mpp$ are invisible to this energy estimate and so in the same way one would obtain the identity \eqref{enid} with $M_\perp$ in place of $\Mpp$ for a solution to \eqref{eqn:bloch-torrey-complex_f}.

Using Cauchy-Schwarz', and Young's inequalities taking the supremum with respect to time yields an energy estimate in $L^\infty(0,T;L^2(\Omega))\cap L^2(0,T;H_D^1(\Omega))$ as follows
\Margin{R1 2.}
\begin{equation}\label{enest}
\begin{aligned}
&\max\{(1-\mu-\|\nabla\cdot\delta{\vec v}\|_{L^1(0,T;L^\infty(\Omega))}) 
\|(\Mpp,M_z)\|_{L^\infty(0,T;L^2(\Omega))}^2, \,\\
&\phantom{\max\{(1-\mu-\|\nabla\cdot\delta{\vec v}\|_{L^1(0,T;L^\infty(\Omega))})}
\|(\Mpp,M_z)\|_{L^2(0,T;H_D^1(\Omega))}^2\}\\
&\leq \|(\Mpp,M_z)(0, \cdot)\|_{L^2(\Omega)}^2 + \min\{\tfrac{1}{\mu}\|\revision{\vec{f}}\|_{L^1(0,T;L^2(\Omega))}^2, \, 2 \|\revision{\vec{f}}\|_{L^2(0,T;H_D^1(\Omega)^*)}^2\},
\end{aligned}
\end{equation}
provided \eqref{divv} holds. We use this with $\mu=\frac12(1-\|\nabla\cdot\delta{\vec v}\|_{L^1(0,T;L^\infty(\Omega))})$ and use interpolation 
$\|\phi\|_{L^q(0,T)}\leq \|\phi\|_{L^\infty(0,T)}^{1-\frac{2}{q}} \|\phi\|_{L^2(0,T)}^{\frac{2}{q}}$ to obtain \eqref{enest_thm} with a constant independent of $T$ (which is in particular essential in case $T=\infty$).

To obtain bounds on the time derivatives in case $\revision{\vec{f}}\in L^2(0,T;H_D^1(\Omega;\mathbb{C}\times\mathbb{R})^*)$, we test the PDE with $\frac{d}{dt}(\Mpp,M_z)$, which yields
\[
\begin{aligned}
&\|\frac{d}{dt}\Mpp\|_{L^2(0,T;H_D^1(\Omega)^*)}\\
&=\|-{\vec v}\cdot\nabla \Mpp +\nabla\cdot\bigl(D \nabla \Mpp\bigr) 
-R_2^{**}\Mpp-i\gamma M_zc^+p-f_\perp\|_{L^2(0,T;H_D^1(\Omega)^*)}^2\\    
&\|\frac{d}{dt}M_z\|_{L^2(0,T;H_D^1(\Omega)^*)}\\
&=\|-{\vec v}\cdot\nabla M_z +\nabla\cdot\bigl(D \nabla M_z\bigr) 
-R_1 M_z
+\gamma\Re\bigl(i\Mpp\overline{c^+p}\bigr)-f_z\|_{L^2(0,T;H_D^1(\Omega)^*)}
\end{aligned}
\]
with 
\begin{equation}\label{Rstartstar}
R_2^{**}(t,\vec r)
=R_2^*(\vec r)+i\gamma\vec r \cdot \vec G(t),
\end{equation}
which under the given assumptions can be further bounded as follows.
We have
\[
\begin{aligned}
&\|{\vec v}\cdot\nabla m\|_{L^2(0,T;H_D^1(\Omega)^*)}
\leq\|{\vec v}\cdot\nabla m\|_{L^2(0,T;L^2(\Omega))}
=\|\bigl(D^{-1/2}{\vec v}\bigr)\cdot  D^{1/2}\nabla m\|_{L^2(0,T;L^2(\Omega))}\\
&\leq \|D^{-1/2}{\vec v}\|_{L^\infty(0,T;L^\infty(\Omega))} \|m\|_{L^2(0,T;H_D^1(\Omega))}
\\[1ex] 
&\|\nabla\cdot(D\nabla m)\|_{L^2(0,T;H_D^1(\Omega)^*)}\leq \|m\|_{L^2(0,T;H_D^1(\Omega))}\\ 
&m\in\{\Mpp,\,M_z\}
\end{aligned}
\]
\[
\begin{aligned}
&\|R\,m\|_{L^2(0,T;H_D^1(\Omega)^*)}
\lesssim \|R\,m\|_{L^2(0,T;L^2(\Omega))}\\
&\leq \|R\|_{L^2(0,T;L^\infty(\Omega))} \|m\|_{L^\infty(0,T;L^2(\Omega))}
\quad (R,m)\in\{(R_2^{**},\Mpp),\,(R_1,M_z)\}
\end{aligned}
\]
Boundedness of $\Omega$ is required for bounding the $\gamma\vec r \cdot \vec G(t)$ term.

\Margin{R1 A.}
\end{proof}

Note that assuming uniform positive definiteness of $D$ we could reduce the regularity assumptions on the remaining coefficients by making use of $\|(\Mpp,M_z)\|_{H^1(\Omega)}$ more extensively.

Proposition~\ref{prop:wellposed} will allow us to establish well-definedness, continuity and smoothness of the parameter-to-state map $\mathcal{S}$, cf. Section~\ref{subsec:S}.

\begin{remark}\label{rem:pdelta}
In the low regularity regime with 
\revision{solution}
\Margin{R1 3.} 
space $\mathbb{V}$ defined by \eqref{V_wellposed} we can stay with $\pp\in L^1(0,T)$, which due to its non-reflexivity is an awkward space to work with for optimization, though. Replacing it by the space of Radon measures $\mathcal{M}(0,T)$ that can be identified with the dual of the separable space $C(0,T)$, as is often done in such situations, does not seem to be an option here though. To see this, consider the simple example 
\begin{equation}\label{counterexdelta}
\frac{d}{dt} u - \delta_{t_0}\cdot u =0, \ t\in(0,T)\quad u(0)=0
\end{equation}
with $t_0\in(0,T)$. One might be tempted to accept the Heaviside function $u=1\!\mathrm{I}_{[t_0,T)}$
as a solution, by understanding $\frac{d}{dt}$ in a distributional sense. However, multiplication with $\delta_{t_0}$ in a 
\revision{distributional} 
\Margin{R1 4.}
sense is defined by 
\[
\forall \phi\in\mathcal{D}\,: \ \langle \delta_{t_0}\cdot u, \, \phi\rangle_{\mathcal{D}',\mathcal{D}}
:=\langle \delta_{t_0}, \, \phi\,u\rangle_{\mathcal{D}',\mathcal{D}}
\]
with $\mathcal{D}=C_c^\infty(0,T)$ being the space of test functions, and would thus require $u$ to be contained in $C^\infty(0,T)$, if the outcome is supposed to be a distribution, or at least $u$ to be continuous, so that $\phi\,u\in C(0,T)$ and thus the right hand side in the above identity can be made sense of as $\langle \delta_{t_0}, \, \phi\,u\rangle_{\mathcal{M},C}$. This excludes $u=1\!\mathrm{I}_{[t_0,T)}$ as a solution to \eqref{counterexdelta}, though.

As a consequence, we will use rectangular pulses rather than delta pulses when working with PDE estimates for obtaining uniqueness, that is, in Section~\ref{sec:uniqueness_perturbation}.
\end{remark}
\kspace{
\begin{remark}\label{rem:transform_kspace} 
In case of homogeneous Dirichlet boundary conditions on the whole boundary $\Gamma_D=\partial\Omega$, $\Gamma_I=\emptyset$, with a $C^1$ boundary $\partial\Omega$, the extension-by-zero operator $H^1_D(\Omega)\to H^1_D(\mathbb{R}^3)$ is continuous (cf., e.g., \cite{BrezziFortin}) and in case of (space-) constant $D$, $\vec v$, $R_1$, $R_2^*$, $c^+$ allows us to equivalently characterize solutions to \eqref{eqn:bloch-torrey-complex_rot_f} by PDEs in k-space
\begin{equation}\label{eqn:bloch-torrey-complex_rot_FT_f}
\begin{aligned}
&\frac{d}{dt}\Mphp(t,\vec\xi) - \frac{\gamma}{2\pi}\,\vec G(t)\cdot \nabla_\xi \Mphp(t,\vec\xi)
+\bigl(R_{2,0}^*+\vec v_0(t)\cdot\xi+\xi\cdot D_0\xi \bigr)\Mphp(t,\vec\xi)\\
&\qquad-i\gamma \widehat{M_z}(t, \vec\xi)c^+_0 \pp(t)=\mathcal{F}\fpp(t, \vec\xi)\\    
&\frac{d}{dt}\widehat{M_z}(t, \vec\xi) 
+\bigl(R_{1,0}+\vec v_0(t)\cdot\xi+\xi\cdot D_0\xi\bigr)\widehat{M_z}(t, \vec\xi)-R_{1,0}\widehat{M^{eq}}(\vec\xi))\\
&\qquad-\gamma\Re\bigl(i\Mphp(t, \vec\xi)\overline{c^+_0 \pp(t)}\bigr)=\mathcal{F}f_z(t, \vec\xi).
\end{aligned}
\end{equation}
\end{remark}
}

\subsection{Explicit solution formulas with rectangular pulse RF fields}\label{sec:explsol}
Throughout this subsection we impose ${\vec v}=0$ and thus $\frac{D}{Dt}=\frac{d}{dt}$ in 
\eqref{eqn:bloch-torrey-complex-rotated}, \eqref{eqn:bloch-torrey-complex_rot_FT}, cf. \eqref{Mperp_prime}.
Moreover, $\Omega$ may be any bounded or unbounded subset of $\mathbb{R}^3$.
Only when comparing solutions to the Bloch-Torrey equation by means of Proposition~\ref{prop:wellposed} in Subsection~\ref{subsec:S}, we will have to assume $\Omega$ to be a bounded Lipschitz domain in order to be able to apply Proposition~\ref{prop:wellposed}.  
In case of $\Omega$ being a strict subset of $\mathbb{R}^3$, in order to make sense of the observations \eqref{eqn:mri_measurement}, we assume the support of the coil sensitivities to be contained in $\Omega$, cf. \eqref{supp_cj-}.

With a rectangular pulse 
\begin{equation}\label{pt_rect}
\begin{aligned}
&p(t)=e^{-i\omega_0t}\pp(t), \quad \pp(t)= {\pp}_0\,\taup^{-1} 1\!\mathrm{I}_{[t_0,t_0+\taup]}\\
&\text{ for some }\taup>0,\text{ typically }\taup<<1,
\quad {\pp}_0>0
\end{aligned}
\end{equation}
and assuming that ${\vec G}\equiv0$ on $[t_0,t_0+\taup]$
we can find explicit solutions 
\begin{itemize}
\item
in the Bloch ODE case \eqref{bloch-rotated} with $D=0$ and spatially variable relaxation times as relevant for uniqueness of $R_1(\vec r)$, $R_2(\vec r)$ recovery 
\begin{equation}\label{B_rect}
\begin{aligned}
&\frac{d}{dt}\Mpp(t, \vec r) +
R_2^*(\vec r) \Mpp(t, \vec r)-i\gamma\revision{{\pp}_0} \taup^{-1}\,  M_z(t, \vec r)c^+(\vec r) =0\\    
&\frac{d}{dt}M_z(t, \vec r)  
+R_1(\vec r) (M_z(t, \vec r)-M^{eq}(\vec r))\\
&\hspace*{3.6cm}+\gamma\revision{{\pp}_0} \taup^{-1}\, \Re\bigl(i\Mpp(t, \vec r)c^+(\vec r)\bigr)=0\\
&t\in[t_0,t_0+\taup], \quad \vec r\in\Omega, 
\end{aligned}
\end{equation}
\Margin{R1 B.}
\item
\kspace{
in the Bloch-Torrey PDE case with 
constant diffusion $D\equiv D_0$, relaxation times $R_1\equiv R_{1,0}$, $R_2^*\equiv R_{2,0}$, and $c^+(\vec r)\equiv c^+_0$ after taking the Fourier transform with respect to space \eqref{eqn:bloch-torrey-complex_rot_FT}
\begin{equation}\label{BT0_rect}
\begin{aligned}
&\frac{d}{dt}\Mphp(t, \vec\xi) 
+\bigl(\xi\cdot D_0\xi +R_{2,0}\bigr)\Mphp(t, \vec\xi)
-i\gamma\revision{{\pp}_0} \taup^{-1}\,  \widehat{M_z}(t, \vec\xi)c^+_0 =0\\    
&\frac{d}{dt}\widehat{M_z}(t, \vec\xi) 
+\Bigl(\bigl(\xi\cdot D_0\xi +R_{1,0}\bigr)\widehat{M_z}(t, \vec\xi)-R_{1,0}\widehat{M^{eq}}(\vec\xi))\Bigr)\\
&\hspace*{5.1cm}+\gamma\revision{{\pp}_0} \taup^{-1}\, \Re\bigl(i\Mphp(t, \vec\xi)c^+_0\bigr)=0\\
&t\in[t_0,t_0+\taup], \quad \xi\in\mathbb{R}^3.
\end{aligned}
\end{equation}
}
\end{itemize}  
Both lead to a system of ODEs, pointwise in $\Omega$ or in k-space, of the form
\begin{equation}\label{3by3ODEsys}
\frac{d}{dt}\left(\begin{array}{c}m_x(t)\\m_y(t)\\m_z(t)\end{array}\right)
+
\left(\begin{array}{ccc}
\alpha_2&0&-\bb_y\\ 
0&\alpha_2&\bb_x\\
\bb_y&-\bb_x&\alpha_1\end{array}\right)
\left(\begin{array}{c}m_x(t)\\m_y(t)\\m_z(t)\end{array}\right)
=\left(\begin{array}{c}0\\0\\f\end{array}\right)\quad t\in[t_0,t_0+\taup],
\end{equation}
where for the moment we return to the $\left(\begin{array}{c}m_x\\m_y\end{array}\right)$ instead of $m_x+im_y$ notation and  
\begin{equation}\label{alpha1alpha2b1}
\begin{aligned}
&(\alpha_1,\alpha_2,\bb)=(R_1(\vec r),R_2^*(\vec r),\gamma{\pp}_0\,\taup^{-1} c^+(\vec r))\text{ or }\\
&(\alpha_1,\alpha_2,\bb)=(\xi\cdot D_0\xi +R_{1,0},\xi\cdot D_0\xi +R_{2,0},\gamma{\pp}_0\,\taup^{-1} c^+_0).
\end{aligned}
\end{equation}

\begin{lemma}\label{lem_rect}
For the solutions $(\Mpp,M_z)$ of 
\eqref{bloch-rotated} 
and $(\Mphp,\widehat{M_z})$ of 
\eqref{eqn:bloch-torrey-complex_rot_FT}
with ${\pp}_0 c^+(\vec r)$ $\in \mathbb{R}^+$, $\vec G\equiv 0$, we have, with 
\revisionown{$\varphi(t,\vec r)=(t-t_0)\,\gamma\,{\pp}_0\,c^+(\vec r)\,\taup^{-1}$,} 
\[
\begin{aligned}
&|\Mpp(t,\vec r)\,-\bigl(e^{-R_2^*(r)(t-t_0)}\Re(\Mpp(t_0,\vec r))\\
&\qquad\qquad\qquad+i\bigl(\cos(\varphi(t,\vec r))\Im(\Mpp(t_0,\vec r))-\sin(\varphi(t,\vec r))M_z(t_0,\vec r)\bigr)\bigr)|
\\&\leq C\,\taup(|M(t_0,\vec r)|+ |M^{eq}(\vec r)|)
\\[1ex]
&|M_z(t,\vec r)\,-\bigl(\sin(\varphi(t,\vec r))\Im(\Mpp(t_0,\vec r))+\cos(\varphi(t,\vec r))M_z(t_0,\vec r)\bigr)|
\\&\leq C\,\taup(|M(t_0,\vec r)|+ |M^{eq}(\vec r)|)
\end{aligned}
\]
for a constant $C$ depending only on $|R_1(\vec r)|$, $|R_2^*(\vec r)|$
\kspace{ and
\[
\begin{aligned}
&|\Mphp(t,\vec\xi)\, -\bigl(e^{-(\xi\cdot D_0\xi+R_{2,0}^*)(t-t_0)}\Re(\Mphp(t_0,\vec\xi))\\
&\qquad\qquad\qquad+i\bigl(\cos(\varphi(t,\vec r))\Im(\Mphp(t_0,\vec\xi))-\sin(\varphi(t,\vec r))\widehat{M_z}(t_0,\xi)\bigr)\bigr)|
\\&\leq C\,\taup|\Mphp(t_0,\vec\xi)|+ |\widehat{M^{eq}}(\vec\xi)|)
\\[1ex]
&|\widehat{M_z}(t,\vec\xi)\,-\bigl(\sin(\varphi(t,\vec r))\Im(\Mphp(t_0,\xi))+\cos(\varphi(t,\vec r))\widehat{M_z}(t_0,\xi)\bigr)|
\\&\leq C\,\taup|\Mphp(t_0,\vec\xi)|+ |\widehat{M^{eq}}(\vec\xi)|)
\end{aligned}
\]
for a constant $C$ depending only on $|R_{1,0}|$, $|R_{2,0}^*|$.
}
\end{lemma}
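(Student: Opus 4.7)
The plan is to treat the Bloch ODE on the short pulse interval $[t_0,t_0+\taup]$ as a regularly perturbed rotation. On that interval the pulse coupling $\beta:=\gamma c^+(\vec r)\pp(t)=\gamma{\pp}_0\taup^{-1}c^+(\vec r)$ is of order $\taup^{-1}$, so its time-integrated effect is the $O(1)$ rotation angle $\varphi$, whereas the coefficients $R_1$, $R_2^*$ and the source $R_1 M^{eq}$ remain $O(1)$ and therefore contribute only $O(\taup)$ after integration over an interval of length $\taup$.

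First I would define the ansatz $(\tilde M_\perp,\tilde M_z)$ by the two expressions appearing inside the absolute values in the claimed estimates. At $t=t_0$ one has $\varphi(t_0)=0$ and $e^{-R_2^*(\vec r)\cdot 0}=1$, so $(\tilde M_\perp,\tilde M_z)(t_0,\vec r)=(\Mpp,M_z)(t_0,\vec r)$, i.e.\ the ansatz matches the initial data. Substituting $(\tilde M_\perp,\tilde M_z)$ into \eqref{B_rect} I would then compute the residual $r=(r_\perp,r_z)$. By the very design of the ansatz, its $(\Im\tilde M_\perp,\tilde M_z)$-block exactly solves the pure rotation $\frac{d}{dt}\Im\tilde M_\perp=-\beta\tilde M_z$, $\frac{d}{dt}\tilde M_z=\beta\Im\tilde M_\perp$, while $\Re\tilde M_\perp$ decays exactly through the factor $e^{-R_2^*(t-t_0)}$ applied to $\Re(\Mpp(t_0))$; consequently all $O(\taup^{-1})$ contributions to $r$ cancel, and a direct computation yields the pointwise bound
\[
|r(t,\vec r)|\leq C\bigl(|M(t_0,\vec r)|+|M^{eq}(\vec r)|\bigr)
\]
with $C$ depending only on $|R_1(\vec r)|$ and $|R_2^*(\vec r)|$ (together with the fixed flip angle $\gamma c^+(\vec r){\pp}_0$).

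The error $\delta M:=(\Mpp-\tilde M_\perp,\,M_z-\tilde M_z)$ then vanishes at $t=t_0$ and satisfies \eqref{3by3ODEsys} with right-hand side $-r$. Writing the $3\times 3$ coefficient matrix as the sum of a skew-symmetric rotation and a diagonal part with nonnegative real entries shows $\frac{d}{dt}|M|^2\leq 0$ along the homogeneous flow, so the associated fundamental matrix $\Phi(t,s)$ has operator norm $\leq 1$ uniformly in $\taup$. Variation of parameters on the short interval then yields
\[
|\delta M(t,\vec r)|\leq \int_{t_0}^t|r(s,\vec r)|\,ds\leq C\,\taup\bigl(|M(t_0,\vec r)|+|M^{eq}(\vec r)|\bigr),
\]
which is the claimed bound in the Bloch case. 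For the Fourier-transformed Bloch--Torrey system \eqref{BT0_rect} the same argument applies pointwise in $\vec\xi$: the matrix structure is identical, with $R_{1,0}$, $R_{2,0}^*$ merely augmented by the nonnegative scalar $\vec\xi\cdot D_0\vec\xi$ (already absorbed into the exponential $e^{-(\vec\xi\cdot D_0\vec\xi+R_{2,0}^*)(t-t_0)}$ in the ansatz), so both the residual estimate and the $\Phi$-norm bound persist uniformly in $\vec\xi$ because $\vec\xi\cdot D_0\vec\xi\geq 0$ only strengthens the decay.

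The main obstacle will be bookkeeping in the residual computation: one has to verify that the $\gamma\delta B^0=\Im(R_2^*)$-induced cross-coupling between $\Re\Mpp$ and $\Im\Mpp$ is precisely cancelled by the terms produced when differentiating $e^{-R_2^*(t-t_0)}\Re(\Mpp(t_0,\vec r))$, so that no $O(\taup^{-1})$ debris survives in $r$ and only genuine $O(1)$ terms involving $R_1$, $R_2^*$ applied to bounded quantities (plus the source $R_1 M^{eq}$) remain. Once that cancellation is checked, the remainder is a one-line Gronwall/variation-of-parameters estimate.
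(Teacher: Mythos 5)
Your argument is correct, but it takes a genuinely different route from the paper. The paper treats the pulse interval through the exact solution of the constant-coefficient system \eqref{3by3ODEsys}: it diagonalizes the coefficient matrix, writes $m(t)=\tilde f+\mathbb{T}e^{-(t-t_0)\mathrm{diag}_\lambda}\mathbb{T}^{-1}\bigl(m(t_0)-\tilde f\bigr)$, and expands $\mathbb{T}$, $\mathbb{T}^{-1}$ and the eigenvalues in powers of $d=(\alpha_2-\alpha_1)/(2|\bb|)=O(\taup)$, the inhomogeneity entering through $\tilde f=O(|\bb|^{-1})f$; nonconstant envelopes are then handled by approximation with simple functions and superposition. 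You instead insert the limiting rotation/decay ansatz, estimate its residual, and close with variation of parameters using non-expansiveness of the homogeneous flow (skew coupling plus nonnegative relaxation -- the same structural cancellation that underlies \eqref{enid}). Your route is more elementary, needs no eigenvector bookkeeping, and absorbs a time-dependent envelope directly through $\varphi(t)$, so the paper's simple-function/superposition step becomes unnecessary; the paper's diagonalization, on the other hand, buys the explicit representations \eqref{mtexplicit0}, \eqref{mtexplicit_gen}, which are reused later in the proof of Proposition~\ref{prop:perturb_expl-sol_lin}.

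Two points in your sketch deserve repair. First, the $\Im(R_2^*)$-induced coupling is not ``precisely cancelled'': the $z$-residual retains the term $-\beta\,\Im\bigl(e^{-R_2^*(t-t_0)}\bigr)\,\Re\Mpp(t_0,\vec r)$ with $\beta=O(\taup^{-1})$, and this is admissible only because $|\Im\bigl(e^{-R_2^*(t-t_0)}\bigr)|\leq|R_2^*|\,(t-t_0)\leq|R_2^*|\,\taup$ on the pulse interval while $\beta\taup$ equals the fixed flip angle; so it is a product $O(\taup^{-1})\cdot O(\taup)$ rather than a cancellation, and the constant inherits the flip-angle normalization (as it implicitly does in the paper's proof, where $O(|\bb|^{-1})$ is converted into $O(\taup)$). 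Second, in the k-space case the residual of the rotating block carries the factor $\alpha_2=\vec\xi\cdot D_0\vec\xi+R_{2,0}^*$, so your claim that the residual estimate is uniform in $\vec\xi$ is too quick: your argument honestly yields a bound of the form $C\,(1+\vec\xi\cdot D_0\vec\xi)\,\taup\,(\ldots)$. This is the same implicit restriction present in the paper's own expansion (its statements $\lambda_\pm=|\bb|(\pm i+O(|\bb|^{-1}))$ and $\tilde f=O(|\bb|^{-1})f$ likewise presuppose $\vec\xi\cdot D_0\vec\xi\lesssim|\bb|$), so it is not a defect relative to the paper's proof, but the uniformity should not be asserted.
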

\begin{proof}
See the appendix. 
\end{proof}
\Margin{R1 C.}
\Margin{R2 (3)}

That is, application of such a pulse up to an $O(|\bb|^{-1})=O(\taup)$ remainder results in a damping of the $x$ component of $m(t_0)$, while the 2-d vector $(m(t_0)_y,m(t_0)_z)$ is rotated around the $x$ axis by an angle of $\varphi(t,\vec r)$.
This conforms to the known fact that the flip angle is proportional to the integral over the envelope $\pp(t)$ of the pulse cf. \cite[page 10]{Uecker2009NonlinearRM}. 
In particular, for a rectangular pulse with 90${}^{\circ}$ or 180${}^{\circ}$ flip angle, recalling \eqref{alpha1alpha2b1}, we have the correspondences
\begin{equation}\label{90-180_rect}
\begin{aligned}
&\gamma{\pp}_0\,c^+=
\frac{\pi}{2}\text{\ldots 90 ${}^{\circ}$ pulse}, \quad &&
M_y(t_0+\taup)\dot{=}-M_z(t_0), \ M_z(t_0+\taup)\dot{=}M_y(t_0)\\
&\gamma{\pp}_0\,c^+=
\pi\text{\ldots 180 ${}^{\circ}$ pulse}, &&
M_y(t_0+\taup)\dot{=}-M_y(t_0), \ M_z(t_0+\taup)\dot{=}-M_z(t_0) 
\end{aligned}
\end{equation}
Here $\dot{=}$ means ``up to an $O(|\bb|^{-1})=O(\taup)$ remainder''.

\medskip

When using several RF pulses on intervals disjoint to the support of ${\vec G}$, the solution can be pieced together with the solution formula for the uncoupled problem in the 
Bloch ODE case \eqref{bloch-rotated}
\begin{equation}\label{B_uncoupled}
\begin{aligned}
&\frac{d}{dt}\Mpp(t, \vec r) +\bigl(R_2^*(\vec r)+ i\gamma\vec r \cdot \vec G(t)\bigr)\Mpp(t, \vec r) =0\\    
&\frac{d}{dt}M_z(t, \vec r) +R_1(\vec r) (M_z(t, \vec r)-M^{eq}(\vec r))=0\\
&t\in[t_0+\taup,t_0+\taup+\tau], \quad \vec r\in \Omega,
\end{aligned}
\end{equation}
\kspace{
or the Fourier transformed constant coefficient case \eqref{eqn:bloch-torrey-complex_rot_FT}
\begin{equation}\label{BT0_uncoupled}
\begin{aligned}
&\frac{d}{dt}\Mphp(t, \vec\xi) 
+\bigl(\xi\cdot D_0\xi +R_{2,0}\bigr)\Mphp(t, \vec\xi)
-\frac{\gamma}{2\pi}\vec G(t)\cdot\nabla_\xi\Mphp(t, \vec\xi)=0\\    
&\frac{d}{dt}\widehat{M_z}(t, \vec\xi) 
+\bigl(\xi\cdot D_0\xi +R_{1,0}\bigr) (\widehat{M_z}(t, \vec\xi)-\widehat{M^{eq}}(\vec\xi))=0\\
&t\in[t_0+\taup,t_0+\taup+\tau],\quad \vec\xi\in \mathbb{R}^3,
\end{aligned}
\end{equation}
}
\Margin{R1 5.} 
given by
\begin{equation}\label{MperpMz_B}
\begin{aligned}
&\Mpp(t,\vec r)=
\revision{\eexp{-(R_2^*(\vec r)(t-t_0+\taup)+2\pi i ({\vec k}(t)-{\vec k}(t_0+\taup))\cdot{\vec r})}}\,\Mpp(t_0+\taup,\vec r)\\
&M_z(t,\vec r)= M^{eq}(\vec r)+ \revision{\eexp{-R_1(\vec r)(t-t_0+\taup)}}\,(M_z(t_0+\taup, \vec r)-M^{eq}(\vec r)).
\end{aligned}
\end{equation}
\kspace{
and 
\begin{equation}\label{hatMperpMz_BT0}
\begin{aligned}
\Mphp(t,\vec\xi)=&\revision{\eexp{-\int_{t_0+\taup}^t((\vec\xi+\vec k(t)-\vec k(\sigma))\cdot D_0(\vec\xi+\vec k(t)-\vec k(\sigma))+R_{2,0}^*)\, d\sigma}}\\
&\qquad\times\Mphp(t_0+\taup,\vec\xi+\vec k(t)-\vec k(t_0+\taup))\\
\widehat{M_z}(t,\vec\xi)=& \widehat{M^{eq}}(\vec\xi)+ \revision{\eexp{-(\xi\cdot D_0\xi +R_{1,0})(t-t_0+\taup)}} (\widehat{M_z}(t_0+\taup, \vec\xi)-\widehat{M^{eq}}(\vec\xi)).
\end{aligned}
\end{equation}
respectively, 
}
with
\begin{equation}\label{kt}
{\vec k}(t)=\frac{\gamma}{2\pi}\int_0^t \vec G(\tau)\, d\tau.
\end{equation}

\medskip

We now study two situations that are tailored to the recovery of $M^{eq}$ (and, via time differentiation, also of $R_2^*$) as well as $R_1$ as functions of space, and thus for imaging these quantities by means of the available observations, which will also be demonstrated by (approximate) reconstruction formulas. 
\\
\revision{Motivated by} Lemma~\ref{lem_rect} in the following $\dot{=}$ means ``up to an $O(\taup)$ remainder''.
\Margin{R1 6.}
  
\medskip

\noindent
\textit{Sequence for $M^{eq}$ and $R_2$ recovery: $90{}^{\circ}$ pulse:}\\[1ex]
With $c^+(\vec r)=c^+_0\in\mathbb{R}^+$ and
\begin{equation}\label{90_rect}
\begin{aligned}
&p(t)= e^{-i\omega_0t}\, {\pp}_{90^\circ}(t), \quad  {\pp}_{90^\circ}=(2c^+_0\gamma\taup)^{-1}1\!\mathrm{I}_{[0,\taup]}, \\
&\vec{G}(t)=0\text{ on } [0,\taup],\quad
\Mpp(0)=0, \quad M_z(0)=M^{eq}
\end{aligned}
\end{equation}
in the Bloch ODE case \eqref{B_rect}, formula \eqref{90-180_rect} yields
\begin{equation}\label{init90eps}
\begin{aligned}
&\Mpp(\taup,\cdot)\dot{=}-iM_z(0,\cdot)=-iM^{eq}\\
&M_z(\taup,\cdot)\dot{=}\Im(\Mpp(0,\cdot))=0
\end{aligned}
\end{equation}
hence by \eqref{MperpMz_B}
\begin{equation}\label{Mperp_ex_90_rect_B}
\begin{aligned}
\Mpp(t,\vec r)=& 
\eexp{-(R_2^*(\vec r)(t-\taup)+2\pi i {\vec k}(t)\cdot{\vec r})}\,\Mpp(\tau,\vec r)\\
\dot{=}& 
-i\eexp{-(R_2^*(\vec r)(t-\taup)+2\pi i {\vec k}(t)\cdot{\vec r})}M^{eq}(\vec r)\,\\
=&:{\Mpp}_{90^\circ}(t,\vec r)
\qquad \text{ for }t>\taup,\quad \vec r\in\Omega, 
\end{aligned}
\end{equation}
where we have used the fact that under our assumption $\vec{G}(t)=0\text{ on } [0,\taup]$ we have ${\vec k}(\taup)=0$.

Decomposing the effective relaxation time into a mean value and spatial fluctuations
\begin{equation}\label{R2effdecomp}
R_2^*(\vec r) = R_{2,0}^*+\delta R_2^*(\vec r),
\end{equation}
\footnote{Note that as opposed to the decomposition \eqref{R2star}, both components may be complex valued in \eqref{R2effdecomp}.}
in case of approximately constant $R_2^*$, that is, $\delta R_2^*\approx0$, 
due to \eqref{Mperp_ex_90_rect_B},
one can thus recover the Fourier transform of $c^-_j\,M^{eq}$ along the trajectories ${\vec k}(t)$ from the observed data \eqref{eqn:mri_measurement} as
\begin{equation}\label{MR-Fourier_rect_B} 
\begin{aligned}
&i\,e^{R_{2,0}^*(t-\taup)+ i\omega_0t} \, y_j(t) 
\dot{=} \int_{\mathbb{R}^3} e^{-\delta R_2^*(\vec r)(t-\taup)} c^-_j(\vec r)\,M^{eq}(\vec r) \, e^{-2\pi i {\vec k}(t)\cdot{\vec r}}  \,\textsf{d}\vec r\\
&\approx \int_{\mathbb{R}^3} c^-_j(\vec r)\,M^{eq}(\vec r) \, e^{-2\pi i {\vec k}(t)\cdot{\vec r}}  \,\textsf{d}\vec r
=(\mathcal{F}[c^-_j\,M^{eq}])({\vec k}(t))
,\qquad j=1,\ldots N\\
&\text{ for }t>\taup.
\end{aligned}
\end{equation}
This is in fact the key formula underlying the interpretation of MRI as reconstruction from a sampled Fourier transform, \cite{Nishimura2010,Uecker2009NonlinearRM} and the references therein, see also Section~\ref{subsec:MRIiso} below.

Note that in \eqref{MR-Fourier_rect_B}, two types of approximation take place: $\dot{=}$ stands for ``up to an $O(\taup)$ remainder'', whereas $\approx$ means ``up to an $O(\delta R_2^*)$ remainder''.

\smallskip

\kspace{
There is also a k-space version of this formula, that allows for nonvanishing diffusion but requires constant coefficients. For \eqref{eqn:bloch-torrey-complex_rot_FT}, by using \eqref{hatMperpMz_BT0} in place of \eqref{MperpMz_B} we obtain
\begin{equation}\label{Mperp_ex_90_rect_BT0}
\begin{aligned}
&\Mphp(t,\vec\xi)\\
&\dot{=}
-i\,\eexp{-\int_{\taup}^t((\vec\xi+\vec k(t)-\vec k(\sigma))\cdot D_0(\vec\xi+\vec k(t)-\vec k(\sigma))+R_{2,0}^*)\, d\sigma}\,\widehat{M^{eq}}(\vec\xi+\vec k(t))
\\
&=:\widehat{{\Mpp}_{90^\circ}}(t,\vec\xi)\qquad
\text{ for }\vec\xi\in\mathbb{R}^3, \quad t>\taup,
\end{aligned}
\end{equation}
which 
entails cf. \eqref{eqn:mri_measurement_Fourier}
\begin{equation}\label{MR-Fourier_rect_BT0} 
\begin{aligned}
&i\,\eexp{R_{2,0}^*(t-\taup)+ i\omega_0t} \, y_j(t) \\
&\dot{=} 
\int_{\mathbb{R}^3}
\mathcal{F}[c_j^-](\vec\xi-\vec k(t))\,
\eexp{-\int_{\taup}^t(\vec\xi-\vec k(\sigma))\cdot D_0(\vec\xi-\vec k(\sigma))\, d\sigma}\,\widehat{M^{eq}}(\vec\xi)\, d\xi\\
&\text{ for }t>\taup,\qquad j=1,\ldots N.
\end{aligned}
\end{equation}
}

\smallskip

Also $R_2^*(\vec r)$ can be obtained by differentiation of \eqref{MR-Fourier_rect_B} with respect to time, see the appendix. 
While reconstruction of a relaxation time from the rate of the signal appears a very natural thing to do, differentiation renders the resulting formula (mildly) ill-posed and we thus in Section~\ref{sec:uniqueness} here follow a different reconstruction approach, using the same simple experiment.   

\noindent
\textit{Sequence for $R_1$ recovery: $180{}^{\circ}$ pulse -- $\tau$ intermission -- $90{}^{\circ}$ pulse:}\\[1ex]
With $c^+(\vec r)=c^+_0\in\mathbb{R}^+$ and
\begin{equation}\label{180-tau-90_rect}
\begin{aligned}
&p(t)= e^{-i\omega_0t}\, {\pp}_{180^\circ-\tau-90^\circ}(t), \quad 
{\pp}_{180^\circ-\tau-90^\circ}=(c^+_0\gamma\taup)^{-1}\bigl(1\!\mathrm{I}_{[0,\taup]} + \frac12 1\!\mathrm{I}_{[\tau+\taup,\tau+2\taup]}\bigr),\\  
&\vec{G}(t)=0\text{ on } [0,\tau+2\taup],\quad
\Mpp(0)=0, \quad M_z(0)=M^{eq}
\end{aligned}
\end{equation}
in the Bloch ODE case \eqref{B_rect}, formulas \eqref{90-180_rect} and \eqref{MperpMz_B} yield
(noting that $\Re({\Mpp}(t,\vec r))\dot{\equiv}0$ results for all $t>0$)
\begin{equation}\label{init180tau90eps}
\begin{aligned}
&\Mpp(\taup,\cdot)\dot{=}-\Mpp(0,\cdot)=0\qquad
&&\Mpp(\taup+\tau,\cdot)\dot{=}0\qquad\\
&M_z(\taup,\cdot)\dot{=}-M_z(0,\cdot)=-M^{eq}
&&M_z(\taup+\tau,\cdot)\dot{=}(1-2e^{-R_1\tau})M^{eq}
\end{aligned}
\end{equation}
hence, 
\begin{equation}\label{init180tau90tauplus2eps}
\begin{aligned}
&\Mpp(\tau+2\taup,\cdot)\dot{=}-iM_z(\taup+\tau,\cdot)
\\
&M_z(\tau+2\taup,\cdot)\dot{=}\Im(\Mpp(\taup+\tau,\cdot))\dot{=}0,
\end{aligned}
\end{equation}
and, again using \eqref{MperpMz_B},
\begin{equation}\label{Mperp_ex_180-tau-90_rect_B}
\begin{aligned}
&\Mpp(t,\vec r)\\
&= 
\eexp{-(R_2^*(\vec r)(t-(\tau+2\taup))+2\pi i ({\vec k}(t)-{\vec k}(\tau+2\taup))\cdot{\vec r})}
\,\Mpp(\tau+2\taup,\vec r)
\\
&\dot{=} 
-i\eexp{-(R_2^*(\vec r)(t-(\tau+2\taup))+2\pi i ({\vec k}(t)-{\vec k}(\tau+2\taup))\cdot{\vec r})}\,
\\&\hspace*{7cm}
(1-2e^{-R_1(\vec r)\tau})M^{eq}(\vec r)\\
&=:{\Mpp}_{180^\circ-\tau-90^\circ}(t,\vec r)\qquad
\text{ for }\vec r\in\Omega, \quad t>\tau+2\taup.
\end{aligned}
\end{equation}
Analogously to 
\eqref{MR-Fourier_rect_B}, 
we thus obtain  
\begin{equation}\label{obsPhi1_rect_B} 
\begin{aligned}
&-i\,\eexp{R_{2,0}^*(t-(\tau+2\taup))+ i\omega_0t} \, y_j(t)\\ 
&= \int_{\mathbb{R}^3} \eexp{-\delta R_2^*(\vec r)(t-(\tau+2\taup))} c^-_j(\vec r)\,\Phi_\tau(\vec r) \, \eexp{-2\pi i ({\vec k}(t)-{\vec k}(\tau+2\taup))\cdot{\vec r}}  \,\textsf{d}\vec r\\
&\approx \int_{\mathbb{R}^3} c^-_j(\vec r)\,\Phi_\tau(\vec r) \, \eexp{-2\pi i ({\vec k}(t)-{\vec k}(\tau+2\taup))\cdot{\vec r}}  \,\textsf{d}\vec r\\
&=(\mathcal{F}[c^-_j\,\Phi_\tau])(({\vec k}(t)-{\vec k}(\tau+2\taup)))
,\qquad j=1,\ldots N,
\\
&\text{for }t>\tau+2\taup\\
&\text{with }\Phi_\tau(\vec r):=(1-2e^{-R_1(\vec r)\tau})M^{eq}(\vec r)
\end{aligned}
\end{equation}
From this, $R_1(\vec r)\in\mathbb{R}$ can be recovered by using the already reconstructed $M^{eq}(\vec r)$. \revisionown{Alternatively, a formula for obtaining $R_1(\vec r)$ from two different intermission times $\tau\in\{\tau_1,\tau_2\}$ can be found in the appendix. 
}

\kspace{
The k-space version with diffusion $D_0$ of \eqref{Mperp_ex_180-tau-90_rect_B} is 
\begin{equation}\label{Mperp_ex_180-tau-90_rect_BT0}
\begin{aligned}
&\Mphp(t,\vec\xi)\\
&\dot{=} 
-i\,\eexp{-\int_{\tau+2\taup}^t((\vec\xi+\vec k(t)-\vec k(\sigma))\cdot D_0(\vec\xi+\vec k(t)-\vec k(\sigma))+R_{2,0}^*)\, d\sigma}\, \widehat{\Phi}_\tau(\vec\xi+\vec k(t))
\\
&=:\widehat{\Mpp}_{180^\circ-\tau-90^\circ}(t,\vec\xi)\qquad
\text{ for }\vec\xi\in\mathbb{R}^3, \quad t>\tau+2\taup.
\end{aligned}
\end{equation}
cf. \eqref{Mperp_ex_90_rect_BT0} with 
\[
\widehat{\Phi}_\tau(\vec\xi)=
\frac{1}{\vec\xi\cdot D_0\vec\xi+R_{1,0}}
\Bigl(R_{1,0}-(\vec\xi\cdot D_0\vec\xi+2R_{1,0})\, \eexp{-(\vec\xi\cdot D_0\vec\xi+R_{1,0})\tau}\Bigr)\,\widehat{M^{eq}}(\vec\xi).
\]
}

\subsection{Perturbation results}

Part of the uniqueness results in Section~\ref{sec:uniqueness} will rely on closeness of the actual parameter-to-state maps to the simplified ones above that allowed to derive reconstruction formulas for $M^{eq}$, $R_1$ and $R_2^*$. 

These uniqueness proofs will not make use of the approximations $\approx$ for small $\delta R_2^*$ in 
\eqref{MR-Fourier_rect_B}, 
\eqref{obsPhi1_rect_B}, but only rely on smallness of $\taup$. 
The error resulting from the latter is quantified in the following subsection.

\subsubsection{Error estimates in the explicit 
Bloch ODE 
and k-space Bloch-Torrey 
solution formulas}\label{subsec:err-ex}
We first of all provide a quantification of the $\dot{=}$ statements above, based on Lemma~\ref{lem_rect}.
To this end, we define the actual and the approximately explicit  parameter-to-state maps 
\begin{equation}\label{S_90_180}
\begin{aligned}
&\mathcal{S}_{90^\circ}:(M^{eq},R_1,R_2^*) \mapsto \Mpp 
\text{ such that $(\Mpp,M_z)$ solves \eqref{bloch-rotated} with 
\eqref{90_rect}}\\
&\mathcal{S}_{90^\circ}^{ex}:(M^{eq},R_1,R_2^*) \mapsto {\Mpp}_{90^\circ}\quad
\text{ cf. \eqref{Mperp_ex_90_rect_B}}
\\[1ex]
&\mathcal{S}_{180^\circ-\tau-90^\circ}:(M^{eq},R_1,R_2^*) \mapsto \Mpp 
\text{ such that $(\Mpp,M_z)$ solves \eqref{bloch-rotated} with 
\eqref{180-tau-90_rect}}\\
&\mathcal{S}_{180^\circ-\tau-90^\circ}^{ex}:(M^{eq},R_1,R_2^*) \mapsto {\Mpp}_{180^\circ-\tau-90^\circ}\quad
\text{ cf. \eqref{Mperp_ex_180-tau-90_rect_B}}
\end{aligned}
\end{equation}

\revision{Lemma~\ref{lem_rect} directly yields the following perturbation result.}
\Margin{R1 D.} 

\begin{proposition}\label{prop:perturb_expl-sol}
Assume that $\text{supp}(\vec G)\subseteq[\taup,T]$ or $[\tau+2\taup,T]$, respectively. 
For the mappings $\mathcal{S}_{90^\circ}$, $\mathcal{S}_{90^\circ}^{ex}$, $\mathcal{S}_{180^\circ-\tau-90^\circ}$, $\mathcal{S}_{180^\circ-\tau-90^\circ}^{ex}$, defined above, there exists a constant $C_{\mathcal{S}_{ex}}>0$ depending only on $\|R_1\|_{L^\infty(\Omega)}$, $\|R_2^*\|_{L^\infty(\Omega)}$ such that
\[
\begin{aligned}
&\|(\mathcal{S}_{90^\circ}-\mathcal{S}_{90^\circ}^{ex})(M^{eq},R_1,R_2^*)\|_{L^\infty((\taup,T)\times\Omega)}\leq C_{\mathcal{S}_{ex}}\,\taup\|M^{eq}\|_{L^2(\Omega)}
\\
&\|(\mathcal{S}_{180^\circ-\tau-90^\circ}-\mathcal{S}_{180^\circ-\tau-90^\circ}^{ex})(M^{eq},R_1,R_2^*)\|_{L^\infty((\tau+2\taup,T)\times\Omega)}\leq C_{\mathcal{S}_{ex}}\,\taup\|M^{eq}\|_{L^2(\Omega)}.
\end{aligned}
\]
holds. 
\end{proposition}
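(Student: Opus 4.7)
My plan is to combine the single-pulse error bound of Lemma~\ref{lem_rect} with the exact free-evolution formula~\eqref{MperpMz_B}, treating each experiment as a finite concatenation of pulse intervals and pulse-free intervals.

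For $\mathcal{S}_{90^\circ}$ I start from $(\Mpp(0,\vec r),M_z(0,\vec r))=(0,M^{eq}(\vec r))$, so Lemma~\ref{lem_rect} applied on $[0,\taup]$ with $\varphi(\taup)=\gamma c^+_0{\pp}_0=\pi/2$ gives
\[
|\Mpp(\taup,\vec r)+iM^{eq}(\vec r)|\leq C\,\taup\,|M^{eq}(\vec r)|
\]
pointwise in $\vec r$, with $C$ depending only on $\|R_1\|_{L^\infty(\Omega)}$ and $\|R_2^*\|_{L^\infty(\Omega)}$. For $t>\taup$ both the true $\Mpp$ and the approximation ${\Mpp}_{90^\circ}$ are obtained from their respective values at $\taup$ by the same transversal propagator in~\eqref{MperpMz_B}, so the difference equals
\[
(\Mpp-{\Mpp}_{90^\circ})(t,\vec r)=e^{-R_2^*(\vec r)(t-\taup)-2\pi i\vec k(t)\cdot\vec r}\bigl(\Mpp(\taup,\vec r)+iM^{eq}(\vec r)\bigr).
\]
Because $\Re(R_2^*)=R_2\geq0$, this exponential has modulus at most $1$ uniformly in $t\in(\taup,T)$ and $\vec r\in\Omega$, and the desired bound follows by taking the norm over $\vec r$ of the pointwise estimate.

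For $\mathcal{S}_{180^\circ-\tau-90^\circ}$ the same scheme is iterated. A first application of Lemma~\ref{lem_rect} on $[0,\taup]$ with $\varphi(\taup)=\pi$ and initial data $(0,M^{eq})$ yields
\[
|\Mpp(\taup,\vec r)|+|M_z(\taup,\vec r)+M^{eq}(\vec r)|\leq C\,\taup\,|M^{eq}(\vec r)|.
\]
On the free-evolution interval $[\taup,\taup+\tau]$ both states are propagated by the explicit formulas~\eqref{MperpMz_B} and their longitudinal analogue: the transversal propagator has modulus $\leq1$ and the longitudinal one is affine with Lipschitz constant $e^{-R_1\tau}\leq1$, so the error at $\taup+\tau$ remains $O(\taup)|M^{eq}(\vec r)|$ and the state itself is dominated by $|M^{eq}(\vec r)|$ up to a constant. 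A second application of Lemma~\ref{lem_rect} on $[\taup+\tau,2\taup+\tau]$ with $\varphi=\pi/2$ therefore incurs only a further $O(\taup)|M^{eq}(\vec r)|$ error, after which post-pulse propagation via~\eqref{MperpMz_B} preserves the bound uniformly in $t\in(2\taup+\tau,T)$ by the same contractivity argument.

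The main technical point is the bookkeeping at the second pulse: Lemma~\ref{lem_rect} bounds the local error in terms of $|M(t_0,\vec r)|+|M^{eq}(\vec r)|$, so I need an a priori bound $|(\Mpp,M_z)(\taup+\tau,\vec r)|\leq C|M^{eq}(\vec r)|$ to close the estimate. This follows from the approximate values $(0,(1-2e^{-R_1\tau})M^{eq})$ produced by~\eqref{MperpMz_B}, together with the $O(\taup)$ closeness to the true state at $\taup+\tau$ established in the preceding step. Collecting the constants, all of which depend only on $\|R_1\|_{L^\infty(\Omega)}$ and $\|R_2^*\|_{L^\infty(\Omega)}$, into $C_{\mathcal{S}_{ex}}$ then yields both claimed inequalities.
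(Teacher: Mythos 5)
Your argument is correct and is essentially the paper's own (implicit) proof of this proposition: the $\dot=$ statements are quantified by Lemma~\ref{lem_rect} on each pulse interval, and the resulting $O(\taup)|M^{eq}(\vec r)|$ error is propagated through the explicit free-evolution formulas \eqref{MperpMz_B}, whose transversal and longitudinal propagators are contractions since $\Re(R_2^*)\geq0$ and $R_1>0$, with the same bookkeeping at the second pulse via the a priori bound $|(\Mpp,M_z)(\taup+\tau,\vec r)|\leq C|M^{eq}(\vec r)|$. One small point of norm bookkeeping: your pointwise estimate $C\,\taup\,|M^{eq}(\vec r)|$ delivers the stated right-hand side $\|M^{eq}\|_{L^2(\Omega)}$ when the spatial norm on the left is taken in $L^2(\Omega)$ (i.e.\ reading the left-hand norm as $L^\infty$ in time with values in $L^2(\Omega)$, consistent with $\mathbb{V}_\perp$), whereas a literal sup in space would give $\|M^{eq}\|_{L^\infty(\Omega)}$ instead — so state explicitly which spatial norm you take when you "take the norm over $\vec r$".
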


A similar result holds for the derivatives of these maps, which are given by the solution 
$(\uldMpp,\uld{M}_z)=\mathcal{S}_{90^\circ}'(M^{eq},R_1,R_2^*)(\uld{M}^{eq},\uld{R}_1,\uld{R}_2^*)$ to 
\begin{equation}\label{Sprime90deg}
\begin{aligned}
&\frac{d}{dt}\uldMpp(t, \vec r) +\bigl(R_2^*(\vec r) + \vec r \cdot \vec G(t))\bigr)\uldMpp(t, \vec r)-i\gamma  \uld{M}_z(t, \vec r) \, c_0^+{\pp}_{90^\circ}(t)\\
&\qquad\qquad=-\uld{R}_2^*(\vec r)\Mpp(t, \vec r)
\\    
&\frac{d}{dt}\uld{M}_z(t, \vec r) +R_1(\vec r) \uld{M}_z(t, \vec r)-\gamma\Re\bigl(i \uldMpp(t, \vec r)\overline{c_0^+{\pp}_{90^\circ}(t)}\bigr)\\
&\qquad\qquad=-\uld{R}_1(\vec r) (M_z(t, \vec r)-M^{eq}(\vec r))+R_1(\vec r)\uld{M}^{eq}(\vec r))
\end{aligned}
\end{equation}
and its limit as $\taup\to0$, 
$(\uldMpp^{ex},\uld{M}_z^{ex})={\mathcal{S}_{90^\circ}^{ex\,\prime}}(M^{eq},R_1,R_2^*)(\uld{M}^{eq},\uld{R}_1,\uld{R}_2^*)$, respectively; 
likewise for $\mathcal{S}_{180^\circ-\tau-90^\circ}'$, ${\mathcal{S}_{180^\circ-\tau-90^\circ}^{ex\,\prime}}$. 
\begin{proposition}\label{prop:perturb_expl-sol_lin}
There exists a constant $C_{\mathcal{S}_{ex}'}>0$ depending only on $\|R_1\|_{L^\infty(\Omega)}$, $\|R_2^*\|_{L^\infty(\Omega)}$ such that
\[
\begin{aligned}
&\|(\mathcal{S}_{90^\circ}'-{\mathcal{S}_{90^\circ}^{ex\,\prime}})(M^{eq},R_1,R_2^*)\|_{
L^2(\Omega)\times L^\infty(\Omega)^2\,\to\,L^\infty((\taup,T)\times\Omega)}\leq C_{\mathcal{S}_{ex}'}\,\taup\|M^{eq}\|_{L^2(\Omega)}
\\
&\|(\mathcal{S}_{180^\circ-\tau-90^\circ}'-{\mathcal{S}_{180^\circ-\tau-90^\circ}^{ex\,\prime}})(M^{eq},R_1,R_2^*)\|_{L^2(\Omega)\times L^\infty(\Omega)^2\,\to\,L^\infty((\tau+2\taup,T)\times\Omega)}
\\&\hspace*{8.7cm}
\leq C_{\mathcal{S}_{ex}'}\,\taup\|M^{eq}\|_{L^2(\Omega)}.
\end{aligned}
\]
\end{proposition}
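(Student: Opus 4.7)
The plan is to mirror the proof of Proposition~\ref{prop:perturb_expl-sol} at the level of the Fréchet derivatives. A key observation is that \eqref{Sprime90deg} (and its $180^\circ\text{-}\tau\text{-}90^\circ$ analogue) has exactly the same coupled skew-symmetric structure as the Bloch system \eqref{bloch-rotated}: pointwise in $\vec r$, it is an instance of the $3\times3$ ODE \eqref{3by3ODEsys} with the same triple $(\alpha_1,\alpha_2,\bb)$ as in the first line of \eqref{alpha1alpha2b1}, but now with a non-trivial right-hand side
\[
\bigl(\fpp,\,f_z\bigr) = \bigl(-\uld{R}_2^*\,\Mpp,\ R_1\,\uld{M}^{eq}-\uld{R}_1\,(M_z-M^{eq})\bigr),
\]
depending on the nominal state $(\Mpp,M_z)$ and on the perturbation direction $(\uld{M}^{eq},\uld{R}_1,\uld{R}_2^*)$. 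The ``explicit'' derivatives $\mathcal{S}_{90^\circ}^{ex\,\prime}$ and $\mathcal{S}_{180^\circ-\tau-90^\circ}^{ex\,\prime}$ are obtained by straightforward differentiation of the formulas \eqref{Mperp_ex_90_rect_B} and \eqref{Mperp_ex_180-tau-90_rect_B} with respect to $(M^{eq},R_1,R_2^*)$.

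On each pulse subinterval $[t_0,t_0+\taup]$ (where $\vec G\equiv 0$), I apply Duhamel's formula to \eqref{Sprime90deg}. The homogeneous contribution is handled exactly as in Lemma~\ref{lem_rect}: it produces the rotation by $\varphi(\taup)\in\{\pi/2,\pi\}$ about the $\mathsf{x}$-axis up to an $O(\taup)$ remainder with a constant depending only on $\|R_1\|_{L^\infty(\Omega)}$ and $\|R_2^*\|_{L^\infty(\Omega)}$. The inhomogeneous (Duhamel) contribution is bounded by $\taup$ times the supremum of the source on $[t_0,t_0+\taup]$, which through Proposition~\ref{prop:wellposed} applied to the nominal state is controlled by $\|M^{eq}\|_{L^2(\Omega)}$ together with the perturbation-direction norms. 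Matching the resulting values $(\uldMpp(t_0+\taup),\uld{M}_z(t_0+\taup))$ against the linearizations of the jump relations \eqref{init90eps}, \eqref{init180tau90eps}, \eqref{init180tau90tauplus2eps} yields an $O(\taup\,\|M^{eq}\|_{L^2(\Omega)})$ discrepancy between the actual and the explicit linearized states at the end of each pulse.

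On each post-pulse subinterval the linearized system uncouples (since $\pp\equiv 0$) into two scalar linear equations carrying the same integrating factors as in \eqref{MperpMz_B}; the source terms $-\uld{R}_2^*\Mpp$ and $R_1\uld{M}^{eq}-\uld{R}_1(M_z-M^{eq})$ then produce precisely the Duhamel contributions that one obtains by differentiating \eqref{Mperp_ex_90_rect_B}, \eqref{Mperp_ex_180-tau-90_rect_B} with respect to $(M^{eq},R_1,R_2^*)$. Consequently the difference $(\uldMpp-\uldMpp^{ex},\,\uld{M}_z-\uld{M}_z^{ex})$ solves a source-free uncoupled linear system whose initial data at $t=\taup$ (respectively $t=\tau+2\taup$) are already of size $O(\taup\,\|M^{eq}\|_{L^2(\Omega)})$ by the previous step; the asserted estimate in $L^\infty((\taup,T)\times\Omega)$ (resp.\ $L^\infty((\tau+2\taup,T)\times\Omega)$) then follows directly from the explicit integrating-factor representation and the uniform bounds on $R_1$ and $R_2^*$.

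The main obstacle is the bookkeeping on the pulse interval, where the homogeneous part of the ODE has coefficient $\bb=O(\taup^{-1})$ while the inhomogeneous forcing is $O(1)$: one has to verify that the $\taup^{-1}$-amplitude rotation cleanly separates from the slow drift induced by the linearized source, so that the Duhamel remainder contributes only $O(\taup)$ and the rotation contributes the leading linearized jump formula. The cancellation between the $\taup^{-1}$ in $\bb$ and the length $\taup$ of the interval is exactly the mechanism already exploited in the proof of Lemma~\ref{lem_rect}, and the estimates carry over verbatim once the linearized source is rewritten as an inhomogeneity in \eqref{3by3ODEsys}, yielding the constant $C_{\mathcal{S}_{ex}'}$ depending only on $\|R_1\|_{L^\infty(\Omega)}$ and $\|R_2^*\|_{L^\infty(\Omega)}$.
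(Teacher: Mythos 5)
Your proposal takes essentially the same route as the paper's proof: rewrite the linearized system on the pulse interval as the inhomogeneous version of the $3\times3$ ODE \eqref{3by3ODEsys}, use the eigendecomposition/Duhamel representation underlying Lemma~\ref{lem_rect} to show the actual and explicit linearized states differ by $O(\taup)$ at the end of the pulse, and propagate this through the uncoupled post-pulse evolution \eqref{B_uncoupled}. One small imprecision: post-pulse the difference $(\uldMpp-\uldMpp^{ex},\uld{M}_z-\uld{M}_z^{ex})$ is not literally source-free, since the actual linearized equation carries the source $-\uld{R}_2^*\Mpp$ with the actual nominal state while ${\mathcal{S}^{ex\,\prime}}$ corresponds to the explicit nominal state; the residual source $-\uld{R}_2^*(\Mpp-{\Mpp}_{90^\circ})$ (and its longitudinal analogue) is itself of size $O(\taup\,\|M^{eq}\|_{L^2(\Omega)})$ by Proposition~\ref{prop:perturb_expl-sol}, so its Duhamel contribution stays within the asserted bound and your argument goes through.
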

\begin{proof}
See the appendix. 
\Margin{R1 C.}
\Margin{R2 (3)}
\end{proof}

\begin{remark}\label{rem:perturb_kspace} 	
Due to Lemma~\ref{lem_rect}, analogous perturbation results hold for the parameter-to-state operators $\mathcal{S}^{D_0}_{90^\circ}$, $\mathcal{S}^{D_0}_{180^\circ-\tau-90^\circ}$ relying on the explicit formulas in k-space \eqref{Mperp_ex_90_rect_BT0} and \eqref{Mperp_ex_180-tau-90_rect_BT0}.
\end{remark} 

\subsubsection{Smoothness of the Bloch-Torrey parameter-to-state map}\label{subsec:S}

Next, we broaden dependency of the parameter-to-state map and aim to establish (Lipschitz) continuity with respect to all the relevant quantities, that is, we define
\begin{equation}\label{S_gen}
\begin{aligned}
&\mathcal{S}:(x^{img};x^{mod})\mapsto \Mpp \text{ such that $(\Mpp,M_z)$ solves \eqref{eqn:bloch-torrey-complex-rotated}} \\  
&\text{where }x^{img}=(M^{eq},R_1,R_2^*), \quad x^{mod}=(D,\vec v,c^+,\vec G,\pp)
\end{aligned}
\end{equation}
Results of this kind will not only be an essential ingredient to our uniqueness proof in Section~\ref{sec:uniqueness_perturbation}, but also be useful for analyzing reconstruction methods as well as optimal experimental design of the system via the controls $\vec G$ and $\pp$. 

Since we will make use of Proposition~\ref{prop:wellposed} in this subsection, we assume $\Omega$ to be a bounded Lipschitz domain.

To first of all derive Lipschitz continuity of  $\mathcal{S}$ in appropriate function spaces, we apply the energy estimate \eqref{enest} to the system satisfied by the difference 
$(\delta \Mpp,\delta M_z)$ such that $\delta \Mpp=\mathcal{S}(x^{img};x^{mod})-\mathcal{S}(\tilde{x}^{img};\tilde{x}^{mod})$, which is \eqref{eqn:bloch-torrey-complex_rot_f} with $(\Mpp,M_z)$ replaced by $(\delta \Mpp,\delta M_z)$ and
\begin{equation}\label{fdiff}
\begin{aligned}
-\fpp(t, \vec r) =& 
({\vec v}(t, \vec r)-\vec{\tilde{v}}(t, \vec r))\cdot\nabla \Mpptil(t, \vec r) -\nabla\cdot\bigl((D(\vec r)-\tilde{D}(\vec r)) \nabla \Mpptil(t, \vec r)\bigr)\\
&+\bigl((R_2^*-\tilde{R}_2^*)+i\gamma \vec r \cdot (\vec G(t)-\vec{\tilde{G}}(t))\bigr)\Mpptil(t, \vec r)\\
&-i\gamma \tilde{M}_z(t, \vec r)\bigl((c^+-\tilde{c}^+)\pp+\tilde{c}^+(\pp-\pptil)\bigr)\\
-f_z(t, \vec r)=&
({\vec v}(t, \vec r)-\vec{\tilde{v}}(t, \vec r))\cdot\nabla \tilde{M}_z(t, \vec r) -\nabla\cdot\bigl((D(\vec r)-\tilde{D}(\vec r)) \nabla \tilde{M}_z(t, \vec r)\bigr)\\ 
&+(R_1(\vec r)-\tilde{R}_1(\vec r))\tilde{M}_z(t, \vec r)\\
&-\gamma\Re\bigl(i\tilde{M}_\perp(t, \vec r)\overline{\bigl((c^+(\vec r)-\tilde{c}^+(\vec r))\pp(t)+\tilde{c}^+(\vec r)(\pp(t)-\pptil(t))\bigr)}\bigr)\\
&-R_1(\vec r)(M^{eq}(\vec r)-\tilde{M}^{eq}(\vec r))-(R_1(\vec r)-\tilde{R}_1(\vec r))\tilde{M}^{eq}(\vec r).
\end{aligned}
\end{equation}
Indeed we can estimate the individual $f$ terms in $L^2(0,T;H_D^1(\Omega)^*)$ or in $L^1(0,T;L^2(\Omega))$, noting that due to linearity of the PDE this yields an overall estimate by superposition.
With $m\in\{\Mpptil,\tilde{M}_z\}$ we have
\begin{equation}\label{estv}
\begin{aligned}
&\|({\vec v}-\vec{\tilde{v}})\cdot\nabla m\|_{L^q(0,T;L^2(\Omega))}
=\|\bigl(D^{-1/2}({\vec v}-\vec{\tilde{v}})\bigr)\cdot  D^{1/2}\nabla m\|_{L^q(0,T;L^2(\Omega))}\\
&\leq \|D^{-1/2}({\vec v}-\vec{\tilde{v}})\|_{L^{\hat{q}}(0,T;L^\infty(\Omega))} \|m\|_{L^2(0,T;H_D^1(\Omega))}, \quad q\in\{1,2\}, \quad\hat{q}= \frac{2q}{2-q}
\end{aligned}
\end{equation}
\begin{equation*}
\begin{aligned}
&\|-\nabla\cdot\bigl((D-\tilde{D})\nabla m\bigr)\|_{L^2(0,T;H_D^1(\Omega)^*)}\\
&\skipsimple{
=\Bigl(\int_0^T\Bigl(\sup_{w\in C_c^\infty(\Omega)\setminus\{0\}} \frac{1}{\|w\|_{H_D^1(\Omega)}}
\int_\Omega (D^{-1/2}(D-\tilde{D})D^{-1/2}\, D^{1/2}\nabla m) \cdot D^{-1/2}\nabla w\, d{\vec r}\Bigr)^2 \, dt\Bigr)^{1/2}
}
\leq \|D^{-1/2}(D-\tilde{D})D^{-1/2}\|_{L^\infty(\Omega;\mathbb{R}^{3\times 3})}\, \|m\|_{L^2(0,T;H_D^1(\Omega))}
\end{aligned}
\end{equation*}
\begin{equation}\label{estR}
\begin{aligned}
&\|R\,m\|_{L^q(0,T;L^2(\Omega)}\leq \|R\|_{L^q(0,T;L^\infty(\Omega)}\|m\|_{L^\infty(0,T;L^2(\Omega)}, \quad q\in\{1,2\}\\ 
&R\in\{R_1-\tilde{R}_1,\,R_2^*-\tilde{R}_2^*,\,\vec r \cdot (\vec G(t)-\vec{\tilde{G}}(t)),\,(c^+-\tilde{c}^+)\pp+\tilde{c}^+(\pp-\pptil)\}. 
\end{aligned}
\end{equation}
Finiteness of the ($D$ dependent) norms defined by
\[ 
\begin{aligned}
&\|\delta\vec v\|_{L^\infty_{D^{-1/2}}(\Omega;\mathbb{R}^3)}:=
\|D^{-1/2}{\delta\vec v}\|_{L^\infty(\Omega)}, 
\\
&\|\delta D\|_{L^\infty_{D^{-1/2}}(\Omega;\mathbb{R}^{3\times 3})}:=
\|D^{-1/2}\delta D\,D^{-1/2}\|_{L^\infty(\Omega;\mathbb{R}^{3\times 3})}
\end{aligned}
\]
in the estimates above is enabled, e.g., by a scenario where in regions of degenerate $D$
no motion occurs and the diffusion tensor does not change.  

Corresponding to these estimates and the regularity assumptions from Proposition~\ref{prop:wellposed}, we use the parameter spaces 
\begin{equation}\label{X}
\begin{aligned}
&\mathbb{X}^{img}=L^2(\Omega;\mathbb{R})\times L^\infty(\Omega;\mathbb{R})\times L^\infty(\Omega;\mathbb{C})\\
&\mathbb{X}^{mod}=L^\infty_{D^{-1/2}}(\Omega;\mathbb{R}^{3\times 3})\times L^2(0,T;L^\infty_{D^{-1/2}}(\Omega;\mathbb{R}^3))\times L^\infty(\Omega;\mathbb{C})\\
&\hspace*{7cm}\times L^1(0,T;\mathbb{R}^3)\times L^1(0,T)
\end{aligned}
\end{equation}
the domain
\begin{equation}\label{DS}
\begin{aligned}
\mathcal{D}(\mathcal{S})=\{&(M^{eq},R_1,R_2^*,D,\vec v,c^+,\vec G,\pp)\in \mathbb{X}^{img}\times \mathbb{X}^{mod}\, : \,\\ 
&\vec v\in L^1(0,T;W^{1,\infty}(\Omega;\mathbb{R}^3)) \text{ and \eqref{definiteness}, \eqref{divv}  hold}\}
\end{aligned}
\end{equation}
and the solution space (cf. \eqref{V_wellposed})
\begin{equation}\label{V}
\mathbb{V}_\perp=L^\infty(0,T;L^2(\Omega;\mathbb{C}))\cap L^2(0,T;H_D^1(\Omega;\mathbb{C})).
\end{equation}

\revision{With this, we can conclude from \eqref{fdiff}, \eqref{estv}, \eqref{estR} the following.}
\Margin{R1 E.}

\begin{proposition}\label{prop:perturb_gen}
Let $\Omega$ to be a bounded Lipschitz domain.
For all ${x}^{img}$, $\tilde{x}^{img}$ $\in\mathbb{X}^{img}$, ${x}^{mod}$, $\tilde{x}^{mod}$ $\in\mathbb{X}^{mod}$, there exists $C_{\mathcal{S}}>0$ depending only on $\|\tilde{x}^{img}\|_{\mathbb{X}^{img}}$, $\|{x}^{mod}\|_{\mathbb{X}^{mod}}$, $\|\tilde{x}^{mod}\|_{\mathbb{X}^{mod}}$ such that 
\[
\|\mathcal{S}(x^{img};x^{mod})-\mathcal{S}(\tilde{x}^{img};\tilde{x}^{mod})\|_{\mathbb{V}_\perp}
\leq C_{\mathcal{S}}(\|x^{img}-\tilde{x}^{img}\|_{\mathbb{X}^{img}}+\|x^{mod}-\tilde{x}^{mod}\|_{\mathbb{X}^{mod}})
\]
holds.
The same holds true in a higher regularity setting where $\mathbb{X}^{mod}$, $\mathbb{V}_\perp$ are replaced by 
\begin{equation}\label{XVhi}
\begin{aligned}
&\mathbb{X}^{mod}_{hi}=L^\infty_{D^{-1/2}}(\Omega;\mathbb{R}^{3\times 3})\times L^\infty(0,T;L^\infty_{D^{-1/2}}(\Omega;\mathbb{R}^3))\times L^\infty(\Omega;\mathbb{C})\\
&\hspace*{7cm}\times L^2(0,T;\mathbb{R}^3)\times L^2(0,T)\\
&\mathbb{V}_{\perp\,hi}=H^1(0,T;H_D^1(\Omega)^*)\cap \mathbb{V}_\perp
\end{aligned}
\end{equation}
\end{proposition}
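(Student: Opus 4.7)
The plan is to apply the energy estimate of Proposition~\ref{prop:wellposed} to the difference $(\delta\Mpp,\delta M_z):=\mathcal{S}(x^{img};x^{mod})-\mathcal{S}(\tilde{x}^{img};\tilde{x}^{mod})$. Writing $(\Mpp,M_z)=\mathcal{S}(x^{img};x^{mod})$ and $(\Mpptil,\tilde M_z)=\mathcal{S}(\tilde{x}^{img};\tilde{x}^{mod})$, subtraction shows that $(\delta\Mpp,\delta M_z)$ solves the linear system \eqref{eqn:bloch-torrey-complex_rot_f} with the non-tilde coefficients $(R_1,R_2^*,D,\vec v,c^+,\vec G,\pp)$, initial datum $(0,M^{eq}-\tilde M^{eq})$, and source $f=(\fpp,f_z)$ exactly as in \eqref{fdiff}. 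Since $(x^{img},x^{mod})\in\mathcal{D}(\mathcal{S})$, Proposition~\ref{prop:wellposed} is applicable as soon as $f$ is shown to lie in $L^1(0,T;L^2(\Omega))+L^2(0,T;H_D^1(\Omega)^*)$ with a controlled norm.

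I would then bound each term in \eqref{fdiff} by a product of the corresponding parameter difference and a suitable norm of $(\Mpptil,\tilde M_z)$. The two principal-part contributions $(\vec v-\vec{\tilde v})\cdot\nabla\tilde m$ and $-\nabla\cdot((D-\tilde D)\nabla\tilde m)$ are placed in $L^2(0,T;H_D^1(\Omega)^*)$ using \eqref{estv} (with $q=2$, $\hat q=\infty$) and the $H_D^1$-dual estimate for the diffusion term stated between \eqref{estv} and \eqref{estR}, controlled respectively by $\|D^{-1/2}(\vec v-\vec{\tilde v})\|_{L^\infty(0,T;L^\infty(\Omega))}$ and $\|D^{-1/2}(D-\tilde D)D^{-1/2}\|_{L^\infty(\Omega)}$, both already built into the $\mathbb{X}^{mod}$-norm of the parameter difference. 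The remaining zero-order contributions coming from $R_1-\tilde R_1$, $R_2^*-\tilde R_2^*$, $\vec r\cdot(\vec G-\vec{\tilde G})$, $(c^+-\tilde c^+)\pp$, $\tilde c^+(\pp-\pptil)$, $R_1(M^{eq}-\tilde M^{eq})$ and $(R_1-\tilde R_1)\tilde M^{eq}$ are placed in $L^1(0,T;L^2(\Omega))$ via \eqref{estR} with $q=1$, where the boundedness of $\Omega$ supplies $\|\vec r\cdot(\vec G-\vec{\tilde G})\|_{L^\infty(\Omega)}\leq\mathrm{diam}(\Omega)\,|\vec G-\vec{\tilde G}|$. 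The norms of $(\Mpptil,\tilde M_z)$ that appear on the right are then bounded by a separate application of Proposition~\ref{prop:wellposed} to the tilde-system (whose inhomogeneity is $\tilde R_1\tilde M^{eq}$ and whose initial datum is $(0,\tilde M^{eq})$), yielding an a priori estimate depending only on $\|\tilde x^{img}\|_{\mathbb{X}^{img}}$ and $\|\tilde x^{mod}\|_{\mathbb{X}^{mod}}$. Inserting this into \eqref{enest_thm} produces the asserted Lipschitz bound, with $C_{\mathcal S}$ having the prescribed dependencies.

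For the higher-regularity statement the plan is to observe that the stronger $\mathbb{X}^{mod}_{hi}$-assumptions—$D^{-1/2}\vec v\in L^\infty(0,T;L^\infty(\Omega))$, $\vec G\in L^2(0,T)$, $\pp\in L^2(0,T)$—are precisely what the second part of Proposition~\ref{prop:wellposed} requires to place $(\Mpptil,\tilde M_z)$ in $\mathbb{V}_{\perp\,hi}$, and that the same term-by-term argument, now using \eqref{estv} and \eqref{estR} with $q=2$, places every piece of $f$ in $L^2(0,T;H_D^1(\Omega)^*)$; Proposition~\ref{prop:wellposed} then furnishes the additional $H^1(0,T;H_D^1(\Omega)^*)$-regularity of the difference. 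The main obstacle I anticipate is not analytic but bookkeeping: organising the roughly ten terms of \eqref{fdiff} so that each factor decomposes as ``parameter-difference norm $\times$ a priori norm of $(\Mpptil,\tilde M_z)$'', and verifying that the resulting $C_{\mathcal S}$ truly depends only on $\|\tilde x^{img}\|_{\mathbb{X}^{img}}$, $\|x^{mod}\|_{\mathbb{X}^{mod}}$, $\|\tilde x^{mod}\|_{\mathbb{X}^{mod}}$ and on $1-\|\nabla\cdot\delta\vec v\|_{L^1(0,T;L^\infty(\Omega))}$ inherited from \eqref{enest_thm}.
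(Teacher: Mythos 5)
Your proposal is correct and follows essentially the same route as the paper: the difference of states solves \eqref{eqn:bloch-torrey-complex_rot_f} with source \eqref{fdiff}, each term is estimated via \eqref{estv}--\eqref{estR} as (parameter-difference norm)$\times$(a priori norm of the tilde-state), the tilde-state is bounded by Proposition~\ref{prop:wellposed}, and the energy estimate \eqref{enest_thm} (resp.\ its second, higher-regularity part) then gives the Lipschitz bound in $\mathbb{V}_\perp$ (resp.\ $\mathbb{V}_{\perp\,hi}$). One small exponent correction: in the low-regularity setting the velocity difference is only controlled in $L^2(0,T;L^\infty_{D^{-1/2}}(\Omega;\mathbb{R}^3))$ by the $\mathbb{X}^{mod}$-norm, so the term $(\vec v-\vec{\tilde v})\cdot\nabla\Mpptil$ must be handled with \eqref{estv} for $q=1$, $\hat q=2$ (i.e.\ placed in $L^1(0,T;L^2(\Omega))$); your choice $q=2$, $\hat q=\infty$ with the $L^\infty(0,T;L^\infty(\Omega))$ factor is what the higher-regularity space $\mathbb{X}^{mod}_{hi}$ provides.
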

As can be read off from \eqref{XVhi}, somewhat higher temporal regularity in $\mathbb{X}^{mod}_{hi}$ is required to enable solutions in $\mathbb{V}_{\perp\,hi}$.

Note that 
\revisionown{the norms defined in \eqref{X}, \eqref{V}, \eqref{XVhi}}
depend on $x^{mod}$ via $D$. Under a uniform positive definiteness condition on $D$ included in the definition of the domain $\mathcal{D}(\mathcal{S})$ \revisionown{\eqref{DS}} they could be made independent of $x^{mod}$.\\
Dependence of $C_{\mathcal{S}}$ on $\|{x}^{mod}\|_{\mathbb{X}^{mod}}$ \textit{and} $\|\tilde{x}^{mod}\|_{\mathbb{X}^{mod}}$ is due to the trilinear terms $M_z(t, \vec r)c^+(\vec r)\pp$, $\Re\bigl(i\Mpp(t, \vec r)\overline{c^+(\vec r)\pp(t)}\bigr)$. All other terms are bilinear.

A similar result holds for the derivative $(\uldMpp,\uld{M}_z)=$ $\mathcal{S}'(x^{img};x^{mod})$ $(\uld{x}^{img};\uld{x}^{mod})$, which is given as the solution to \eqref{eqn:bloch-torrey-complex_rot_f} with $(\Mpp,M_z)$ replaced by $(\uldMpp,\uld{M}_z)$ and
\begin{equation}\label{fderiv}
\begin{aligned}
&-\fpp(t, \vec r)= 
\uld{\vec v}(t, \vec r)\cdot\nabla \Mpp(t, \vec r) 
-\nabla\cdot\bigl(\uld{D}(\vec r) \nabla \Mpp(t, \vec r)\bigr) \\
&+\bigl(\uld{R}_2^*(\vec r)+i\gamma\vec r \cdot \uld{\vec G}(t)\bigr)\Mpp(t, \vec r)
-i\gamma M_z(t, \vec r)(\uld{c}^+(\vec r)\pp(t)+c^+(\vec r)\uld{\pp}(t))
\\[1ex]    
&-f_z(t, \vec r)=\uld{\vec v}(t, \vec r)\cdot\nabla M_z(t, \vec r) -\nabla\cdot\bigl(\uld{D}(\vec r) \nabla M_z(t, \vec r)\bigr) \\
&+\uld{R}_1(\vec r) (M_z(t, \vec r)-M^{eq}(\vec r))-R_1(\vec r) \uld{M}^{eq}(\vec r)\\
&-\gamma\Re\bigl(i\Mpp(t, \vec r)\overline{(\uld{c}^+(\vec r)\pp(t)+c^+(\vec r)\uld{\pp}(t))}\bigr)
\end{aligned}
\end{equation}
cf. \eqref{fdiff}, where $\Mpp=\mathcal{S}(x^{img};x^{mod})$ and $M_z$ is the corresponding longitudinal part.

\begin{proposition}\label{prop:perturb_gen_deriv}
Let $\Omega$ to be a bounded Lipschitz domain.
For all ${x}^{img}$, $\tilde{x}^{img}$ $\in\mathbb{X}^{img}$, ${x}^{mod}$, $\tilde{x}^{mod}$ $\in\mathbb{X}^{mod}$, $\mathcal{S}$ is Fr\'{e}chet differentiable at $(x^{img};x^{mod})$ and $(\tilde{x}^{img};\tilde{x}^{mod})$, and there exists $C_{\mathcal{S}'}>0$ depending only on $\|\tilde{x}^{img}\|_{\mathbb{X}^{img}}$, $\|{x}^{mod}\|_{\mathbb{X}^{mod}}$, $\|\tilde{x}^{mod}\|_{\mathbb{X}^{mod}}$ such that 
\[
\begin{aligned}
&\|\mathcal{S}'(x^{img};x^{mod})-\mathcal{S}'(\tilde{x}^{img};\tilde{x}^{mod})\|_{L(\mathbb{X}^{img}\times\mathbb{X}^{mod},\mathbb{V}_\perp)}\\
&\leq C_{\mathcal{S}'}(\|x^{img}-\tilde{x}^{img}\|_{\mathbb{X}^{img}}+\|x^{mod}-\tilde{x}^{mod}\|_{\mathbb{X}^{mod}})
\end{aligned}
\]
holds.
\end{proposition}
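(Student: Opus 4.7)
My plan is to exploit the fact that the PDE system \eqref{eqn:bloch-torrey-complex-rotated} is semilinear of a very specific form: the only nonlinearity in $(x^{img};x^{mod})\mapsto(\Mpp,M_z)$ comes from \emph{bilinear} coupling terms of the type $R_2^{*}\Mpp$, $\nabla\!\cdot\!(D\nabla\Mpp)$, $\vec v\cdot\nabla\Mpp$, $R_1(M_z-M^{eq})$, together with the \emph{trilinear} terms $M_z c^+\pp$ and $\Mpp\,\overline{c^+\pp}$. All remainders that arise in Taylor-expanding $\mathcal{S}$ are therefore themselves governed by linear PDEs of the form \eqref{eqn:bloch-torrey-complex_rot_f}, to which Proposition~\ref{prop:wellposed} applies.

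First I would establish existence of the derivative: given $(x^{img};x^{mod})\in\mathcal{D}(\mathcal{S})$, the linear system \eqref{eqn:bloch-torrey-complex_rot_f} with $(\Mpp,M_z)$ replaced by $(\uldMpp,\uld{M}_z)$ and right-hand side \eqref{fderiv} has a unique solution in $\mathbb{V}_\perp$ by Proposition~\ref{prop:wellposed}, because each term on the right of \eqref{fderiv} lies in $L^1(0,T;L^2(\Omega))+L^2(0,T;H_D^1(\Omega)^*)$ with norm linear in $(\uld{x}^{img};\uld{x}^{mod})$; this is checked exactly as in the estimates \eqref{estv}--\eqref{estR} used for Proposition~\ref{prop:perturb_gen}, replacing coefficient differences by perturbations. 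This gives a well-defined bounded linear candidate $\mathcal{S}'(x^{img};x^{mod})\in L(\mathbb{X}^{img}\times\mathbb{X}^{mod},\mathbb{V}_\perp)$ whose operator norm is bounded by a constant depending only on $\|x^{img}\|_{\mathbb{X}^{img}}$ and $\|x^{mod}\|_{\mathbb{X}^{mod}}$. To show this is truly the Fr\'echet derivative, set $h=(\uld{x}^{img};\uld{x}^{mod})$ and $w_h:=\mathcal{S}(x+h;x^{mod}+\uld{x}^{mod})-\mathcal{S}(x;x^{mod})-\mathcal{S}'(x;x^{mod})h$; then $w_h$ (together with its longitudinal counterpart) solves \eqref{eqn:bloch-torrey-complex_rot_f} with coefficients $(x^{img}+\uld{x}^{img};x^{mod}+\uld{x}^{mod})$ and a right-hand side consisting purely of the bilinear/trilinear cross terms, e.g.\ $\uld R_2^{*}(\Mpp(x+h)-\Mpp(x))$, $\nabla\!\cdot\!(\uld D\nabla(\Mpp(x+h)-\Mpp(x)))$, and $\uld c^{+}\uld\pp\,M_z(x+h)$. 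Estimating each of these in $L^1(0,T;L^2(\Omega))+L^2(0,T;H_D^1(\Omega)^*)$ by $\|h\|_{\mathbb{X}^{img}\times\mathbb{X}^{mod}}\cdot\|\mathcal{S}(x+h)-\mathcal{S}(x)\|_{\mathbb{V}_\perp}$ and invoking Proposition~\ref{prop:perturb_gen} gives $\|w_h\|_{\mathbb{V}_\perp}=O(\|h\|^2)$, as required.

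For the Lipschitz estimate on $\mathcal{S}'$, fix a direction $h\in\mathbb{X}^{img}\times\mathbb{X}^{mod}$ with $\|h\|\le 1$ and set $z:=\mathcal{S}'(x;x^{mod})h-\mathcal{S}'(\tilde x;\tilde x^{mod})h$. Writing out the two defining systems and subtracting, I see that $z$ (with its longitudinal companion) again solves a system of the form \eqref{eqn:bloch-torrey-complex_rot_f} at, say, $(x^{img};x^{mod})$, whose right-hand side splits into two groups: (i) coefficient-difference terms of the form $(R_2^{*}-\tilde R_2^{*})\mathcal{S}'(\tilde x)h$, $\nabla\!\cdot\!((D-\tilde D)\nabla \mathcal{S}'(\tilde x)h)$ etc., obtained by rewriting the linearized PDE at $\tilde x$ with the coefficients from $x$; and (ii) state-difference terms from \eqref{fderiv} such as $\uld R_1\bigl((M_z(x)-M^{eq})-(\tilde M_z-\tilde M^{eq})\bigr)$ and $\uld c^+\pp(M_z(x)-\tilde M_z)$. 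Group (i) is controlled by $\|x-\tilde x\|_{\mathbb{X}^{mod}}\,\|\mathcal{S}'(\tilde x)h\|_{\mathbb{V}_\perp}\le C\|x-\tilde x\|\,\|h\|$ by the operator-norm bound from Step~1, while group (ii) is controlled by $\|h\|\,\|\mathcal{S}(x)-\mathcal{S}(\tilde x)\|_{\mathbb{V}_\perp}\le C\|h\|\,\|x-\tilde x\|$ via Proposition~\ref{prop:perturb_gen}. A final application of the energy estimate \eqref{enest_thm} then yields the claimed Lipschitz bound.

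The main technical nuisance, rather than a genuine obstacle, is careful bookkeeping for the trilinear couplings $M_z c^+\pp$ and $\Mpp\,\overline{c^+\pp}$: their second derivatives produce three cross-difference terms each, and for the estimate on $z$ one must use the identity $abc-\tilde a\tilde b\tilde c=(a-\tilde a)bc+\tilde a(b-\tilde b)c+\tilde a\tilde b(c-\tilde c)$, then apply the corresponding $L^\infty$/$L^1$/$L^2$ bounds from $\mathbb{X}^{mod}$. A second point requiring minor care is the use of the $D$-dependent norms $L^\infty_{D^{-1/2}}$ in \eqref{X}, which is precisely why the constant $C_{\mathcal{S}'}$ is allowed to depend on $\|x^{mod}\|_{\mathbb{X}^{mod}}$ (through $D$) rather than being uniform.
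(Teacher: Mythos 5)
Your proposal is correct and follows essentially the same route as the paper: the difference of derivatives applied to a direction is written as the solution of the linearized system \eqref{eqn:bloch-torrey-complex_rot_f} with a right-hand side split into coefficient-difference terms acting on the linearized state and direction terms acting on the state differences, then estimated via Proposition~\ref{prop:wellposed} and the bounds \eqref{estv}--\eqref{estR} together with Proposition~\ref{prop:perturb_gen}. Your more explicit verification of Fr\'echet differentiability via the quadratic remainder $w_h$ simply fleshes out what the paper dismisses as straightforward (bilinearity plus the Banach-algebra structure of $L^\infty(\Omega)$ for the trilinear $c^+\pp$ term), and anchoring the coefficient differences at $\tilde x$ rather than $x$ is an immaterial symmetric variant.
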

\begin{proof}
Fr\'{e}chet differentiability is straightforward to see, relying on the bilinearity of most of the terms and the fact that one of the spaces underlying the only trilinear term (the one involving ${c}^+\pp$) is a Banach algebra (namely $L^\infty(\Omega)$ for ${c}^+$). 

To show the Lipschitz estimate, we use Proposition~\ref{prop:wellposed} together with the fact that
$(\ulddMpp,\uld{dM}_z)=[\mathcal{S}'(x^{img};x^{mod})-\mathcal{S}'(\tilde{x}^{img};\tilde{x}^{mod})](\uld{x}^{img};\uld{x}^{mod})$ solves \eqref{eqn:bloch-torrey-complex_rot_f} with $(\Mpp,M_z)$ replaced by $(\ulddMpp,\uld{dM}_z)$ and 
\[
\begin{aligned}
-\fpp(t, \vec r)=& 
[{\vec v}-\vec{\tilde{v}}](t, \vec r)\cdot\nabla \uldMpp(t, \vec r) 
-\nabla\cdot\bigl([D-\tilde{D}](\vec r) \nabla \uldMpp(t, \vec r)\bigr) \\
&+\bigl([R_2^*-\tilde{R}_2^*](\vec r)+i\gamma\vec r \cdot [{\vec G}-\vec{\tilde{G}}](t)\bigr)\uldMpp(t, \vec r)\\
&-i\gamma \uld{M}_z(t, \vec r)([c^+-\tilde{c}^+](\vec r)\pp(t)+c^+(\vec r)[\pp-\pptil](t))\\
&+\uld{\vec v}(t, \vec r)\cdot\nabla [\Mpp-\Mpptil](t, \vec r) 
-\nabla\cdot\bigl(\uld{D}(\vec r) \nabla [\Mpp-\Mpptil](t, \vec r)\bigr) \\
&+\bigl(\uld{R}_2^*(\vec r)+i\gamma\vec r \cdot \uld{\vec G}(t)\bigr)[\Mpp-\Mpptil](t, \vec r)\\
&-i\gamma [M_z-\tilde{M}_z](t, \vec r)(\uld{c}^+(\vec r)\pp(t)+c^+(\vec r)\uld{\pp}(t))
\\[1ex]    
-f_z(t, \vec r)&
=[{\vec v}-\vec{\tilde{v}}](t, \vec r)\cdot\nabla \uld{M}_z(t, \vec r) 
-\nabla\cdot\bigl([D-\tilde{D}](\vec r) \nabla \uld{M}_z(t, \vec r)\bigr) \\
&+[R_1-\tilde{R}_1](\vec r) (\uld{M}_z(t, \vec r)-M^{eq}(\vec r))\\
&-\gamma\Re\bigl(i\uldMpp(t, \vec r)\overline{([c^+-\tilde{c}^+](\vec r)\pp(t)+c^+(\vec r)[\pp-\pptil](t))}\bigr)\\
&+\uld{\vec v}(t, \vec r)\cdot\nabla M_z(t, \vec r) -\nabla\cdot\bigl(\uld{D}(\vec r) \nabla M_z(t, \vec r)\bigr) \\
&+\uld{R}_1(\vec r) [M_z-\tilde{M}_z](t, \vec r)\\
&-\gamma\Re\bigl(i\Mpp(t, \vec r)\overline{(\uld{c}^+(\vec r)\pp(t)+c^+(\vec r)\uld{\pp}(t))}\bigr),
\end{aligned}
\]
where $(\uldMpp,\uld{M}_z):=\mathcal{S}'(x^{img};x^{mod})(\uld{x}^{img};\uld{x}^{mod})$,
$(\Mpp,M_z):=\mathcal{S}(x^{img};x^{mod})$,
$(\Mpptil,\tilde{M}_z):=\mathcal{S}(\tilde{x}^{img};\tilde{x}^{mod})$.
Each of these terms can be estimated analogously to \eqref{estv}
-- \eqref{estR}.
\end{proof}

\section{Uniqueness}\label{sec:uniqueness}
In this section we will prove two types of uniqueness results for $x^{img}=(M^{eq},R_1,R_2^*)$.
In Section~\ref{sec:uniqueness_perturbation}, uniqueness and stability of reconstruction  \revisionown{will be established by means of a perturbation argument and the (approximate) reconstruction formulas from Section~\ref{sec:explsol} for the Bloch ODE model.} This will be carried out in a reduced formulation of the inverse problem, involving the parameter-to-state map.
Differently from that, Section~\ref{sec:uniqueness_diffusion} employs methodology typically used for proving uniqueness of space dependent source terms and initial conditions in (sub-)diffusion equations. Here we will consider an all-at-once formulation of the inverse problems, that views states and parameters as unknowns in a larger system of equations containing the PDE models and the observation equations. 
The linearization of the so defined forward operator at properly chosen reference states and parameters will be shown to be injective.
 

\subsection{Uniqueness and stability via perturbation of Bloch equation}\label{sec:uniqueness_perturbation}
The inverse problem of reconstructing $x^{img}=(M^{eq},R_1,R_2^*)$ in the Bloch-Torrey equation as space-dependent functions can be written as an operator equation
\begin{equation}\label{Fxy}
F(x^{img})=\underline{y}
\end{equation}
where 
\begin{equation}\label{F_gen}
F(x^{img})=\revisionown{F^{BT}(x^{img})=}\left(\begin{array}{c}
\mathcal{C}\mathcal{S}(x^{img};\check{x}^{mod},{\pp}_{(I)})\\
\mathcal{C}\mathcal{S}(x^{img};\check{x}^{mod},{\pp}_{(II)})\\
\mathcal{C}\mathcal{S}(x^{img};\check{x}^{mod},{\pp}_{(III)})
\end{array}\right),
\end{equation}
\revisionown{defined via solutions of the \underline{B}loch-\underline{T}orrey PDE}
cf. \eqref{S_gen}, with fixed model parameters $\check{x}^{mod}=(D,\vec v,c^+,\vec G)$, whereas variability is allowed in the last component of ${x}^{mod}=(\check{x}^{mod},\pp)$.
In case  
${\pp}_{(I)}\approx {\pp}_{90^\circ}$ cf. \eqref{90_rect},
${\pp}_{(II)}\approx {\pp}_{180^\circ-\tauone-90^\circ}$, 
${\pp}_{(III)}\approx {\pp}_{180^\circ-\tautwo-90^\circ}$, cf. \eqref{180-tau-90_rect}, this can be viewed as a perturbation of 
\begin{equation}\label{F}
\revisionown{F^{B}}(x^{img})=\left(\begin{array}{c}
\mathcal{C}\mathcal{S}_{90^\circ}(x^{img})\\
\mathcal{C}\mathcal{S}_{180^\circ-\tauone-90^\circ}(x^{img})\\
\mathcal{C}\mathcal{S}_{180^\circ-\tautwo-90^\circ}(x^{img})
\end{array}\right),
\end{equation}
\revisionown{defined via solutions of the \underline{B}loch ODE}
cf. \eqref{S_90_180}, which, in its turn is close (in the sense of Proposition~\ref{prop:perturb_gen}) to the explicit formula version for the Bloch ODE model
\begin{equation}\label{F_ex}
F^{ex}(x^{img})=\left(\begin{array}{c}
\mathcal{C}\mathcal{S}_{90^\circ}^{ex}(x^{img})\\
\mathcal{C}\mathcal{S}_{180^\circ-\tauone-90^\circ}^{ex}(x^{img})\\
\mathcal{C}\mathcal{S}_{180^\circ-\tautwo-90^\circ}^{ex}(x^{img})
\end{array}\right).
\end{equation}

Here, the observation operator $\mathcal{C}$ is defined according to \eqref{eqn:mri_measurement}, but in the rotated frame
\begin{equation}\label{obsop}    
\begin{aligned}
\mathcal{C}:\,&\mathbb{V}_\perp\to\mathbb{Y}^N\\ 
&\Mpp\mapsto \Bigl(\int_{\mathbb{R}^3} c^-_j(\vec r) \Mpp(\cdot, \vec r) \,\textsf{d}\vec r\Bigr)_{j=1,\ldots N}
\end{aligned}
\end{equation}
and the parameter-to-state maps according to \eqref{S_90_180} or \eqref{S_gen}.

\subsubsection{The MRI isomorphism}\label{subsec:MRIiso}
Since it is a fundamental building block of the explicit (approximate) reconstruction formulas \eqref{MR-Fourier_rect_B}, 
\eqref{obsPhi1_rect_B}, see also Section~\ref{subsec:MRIex} below, we consider the interpolation problem 
\[
\begin{aligned}
&\text{Given $y\in\mathbb{Y}\subseteq L^2(0,T)$, determine } x\in \Xpar \subseteq L^2(\Omega)\cap L^\infty(\Omega)\\
&\text{such that }\forall t\in[0,T]: \  (\mathcal{F}x)(\vec{k}(t))=y(t).
\end{aligned} 
\]
The Fourier transform is one of the most prominent isomorphisms between $L^2(\mathbb{R}^3)$ and itself.
However, here we only have its sampled version $\mathcal{F}\vert_{\mathcal{K}}$ along the curve 
\begin{equation}\label{K}
\mathcal{K}=\{\vec k(t):t\in(0,T)\}
\end{equation}
defined by a fixed given field gradient $\vec G(t)$ via \eqref{kt}, and we have to take into account the fact that this curve is traversed chronologically, with a regularity induced by the parameter-to-state map.\footnote{In our setting the maximal temporal regularity being $H^1(0,T)$ unless we impose higher regularity on the coefficients; we will actually use the $L^\infty(0,T)$ topology that requires less parameter regularity; see the definition of $\mathbb{V}_\perp$ as compared to $\mathbb{V}_{\perp\,hi}$ in \eqref{V}, \eqref{XVhi}.}    
We thus restrict attention to a linear subspace $\Xpar$ of $L^2(\Omega)\cap L^\infty(\Omega)$ that allows unique and stable inversion of the sampled Fourier transform.
An infinite dimensional choice yielding uniqueness would be the space of analytic functions; however, reconstruction of an analytic function from its values along a curve is unstable in general 
\revision{cf., e.g., \cite{LavrentievRomanovShishatskij1986}.}
\Margin{R2 7)}

One can derive a maximal choice of $\Xpar$ yielding both uniqueness and stability by a metric projection argument as follows.
For given $\mathcal{K}$ \eqref{K} with ${\vec k}$ according to \eqref{kt} for ${\vec G}\in C^{s-1}$, thus satisfying the regularity assumptions of the Trace and Inverse Trace Theorems, 
we have that 
\[
\text{tr}_{\mathcal{K}}:H^s(\mathbb{R}^3)\to H^{s-1}(\mathcal{K}), \quad
\hat{x}\mapsto\hat{x}\circ{\vec k}
\]
(note the descent by two space dimensions) is bounded and has bounded right inverse $\text{tr}_{\mathcal{K}}^{-1}:H^{s-1}(\mathcal{K})\to H^s(\mathbb{R}^3)$, 
so that $\text{tr}_{\mathcal{K}}\text{tr}_{\mathcal{K}}^{-1}=\text{id}_{H^{s-1}(\mathcal{K})}$.
A stable choice of $\Xpar$ is therefore induced by the minimization problem
\[
x=\mathcal{F}^{-1}\hat{x} \text{ with }\hat{x}\in\text{argmin}\{\|\hat{x}\|_{H^s(\mathbb{R}^3)}\, : \,  \text{ such that }\text{tr}_{\mathcal{K}}\hat{x}=y\},
\]
as 
\[
\Xpar=\mathcal{F}^{-1}\mathcal{N}(\text{tr}_{\mathcal{K}})^{\bot_{H^s}}
=\mathcal{F}^{-1}\mathcal{R}(\text{tr}_{\mathcal{K}}^*)
\]
due to the Closed Range Theorem 
and the fact that $\mathcal{R}(\text{tr}_{\mathcal{K}})=H^{s-1}(\mathcal{K})$ is closed.
Here $\mathcal{N}(A)$ and $\mathcal{R}(A)$ denote the nullspace and range of a linear operator $A$, ${}^{\bot_Z}$ the orthogonal complement in the topology of the Hilbert space $Z$, and ${}^*$ the Hilbert space adjoint. 
In the function space $\check{H}^s(\mathbb{R}^3)=\mathcal{F}^{-1}H^s(\mathbb{R}^3)$, equipped with the norm
\[
\|x\|_{\check{H}^s(\mathbb{R}^3)}
=\left(\int_{\mathbb{R}^3}(1+|\vec r|^2)^s\, |x(\vec r)|^2\, d\vec r\right)^{1/2},
\]
this choice therefore satisfies the stability estimate
\[
\sup_{x\in\Xpar\setminus\{0\}} \|x\|_{\check{H}^s(\mathbb{R}^3)}\,\|(\mathcal{F}x)\circ{\vec k}\|_{H^{s-1}(0,T)}^{-1}\,<\infty.
\]
With a bounded domain $\Omega\subseteq\mathcal{B}_\varrho(0)$ we can make use of equivalence of Sobolev norms for band limited functions 
$\|x\|_{\check{H}^s(\mathbb{R}^3)}\leq (1+\varrho^2)^{s/2}\|x\|_{L^2(\mathbb{R}^3)}$, 
to obtain, for 
\[
\Xpar=\mathcal{F}^{-1}\mathcal{N}(\text{tr}_{\mathcal{K}})^{\bot_{H^s}}\cap \{x\in L^2(\Omega)\, : \, x=0\text{ in }\mathbb{R}^3\setminus\Omega\}
\]
the estimate 
\begin{equation}\label{CI_idealcoils}
\sup_{x\in\Xpar\setminus\{0\}} \|x\|_{L^2(\Omega)}\,\|(\mathcal{F}x)\circ{\vec k}\|_{H^{s-1}(0,T)}^{-1}\,<\infty.
\end{equation}

Since we have to take into account the parallel acquisition scheme as well as the coil sensitivities, the interpolation operator $\mathcal I_{\mathcal{K}}$ we will actually need is defined by 
\[
\mathcal I_{\mathcal{K}}(y)=x \text{ with }x\text{ such that }\forall\,j\in\{1,\ldots,N\}\,\forall t\in[0,T]: \  (\mathcal{F}[c^-_j x]) ({\vec k}(t))=y_j(t)
\]
and, extending \eqref{CI_idealcoils}, we assume it to be a bounded operator
\begin{equation}\label{CI}
\sup_{x\in\Xpar\setminus\{0\}} \|x\|_{L^2(\Omega)}\,\left(\sum_{j=1}^N\|(\mathcal{F}[c^-_j x])\circ{\vec k}\|_{\mathbb{Y}}\right)^{-1}\,=:C_{\mathcal{I}}<\infty.
\end{equation}
In case of nonvanishing diffusion, we generalize this to 
\begin{equation}\label{CI_D0}
\sup_{x\in\Xpar\setminus\{0\}} \|x\|_{L^2(\Omega)}\,\left(\sum_{j=1}^N\|\int_{\mathbb{R}^3}\widetilde{c^-_j}(\cdot,\vec\xi) \mathcal{F}[x](\vec\xi)\, d\vec\xi\|_{\mathbb{Y}}\right)^{-1}\,=:C_{\mathcal{I}}<\infty,
\end{equation}
where 
\[
\widetilde{c^-_j}(t,\vec\xi)
=\mathcal{F}[c_j^-](\vec\xi-\vec k(t))\,
e^{-\int_{\taup}^t(\vec\xi-\vec k(\sigma))\cdot D_0(\vec\xi-\vec k(\sigma))\, d\sigma}
\]
cf. \eqref{MR-Fourier_rect_BT0}, and $D_0$ is a constant reference diffusion tensor; in case of $D_0=0$, obviously \eqref{CI} is recovered.

In \eqref{CI}, \eqref{CI_D0}, as mentioned above, the definition of the solution space $\mathbb{V}_\perp$ according to \eqref{V} restricts the strength of the topology $\mathbb{Y}$ so that the observation operator $\mathcal{C}:\mathbb{V}_\perp\to\mathbb{Y}^{N}$ is bounded
\begin{equation}\label{CC}
\sup_{v\in\mathbb{V}_\perp\setminus\{0\}} \|\mathcal{C}v\|_{\mathbb{Y}^{N}}\,{\|v\|_{\mathbb{V}_\perp}}^{-1}\,=:C_{\mathcal{C}}<\infty, 
\end{equation}
that is, 
\begin{equation}\label{Y}
\mathbb{Y}= L^q(0,T)\text{ for some }q\in[2,\infty].
\end{equation} 
Since only values, but not derivative values of $y_j$ can be measured, this is anyway a reasonable data space topology from a practical point of view.

\subsubsection{Recovery of $M^{eq}$, $R_1$ and $R_2^*$ from Bloch ODE explicit solution formulas}\label{subsec:MRIex}

The aim of this subsection is to prove a local Lipschitz stability estimate
\begin{equation}\label{stab_ex}
\forall x^{img},\tilde{x}^{img}\, \in \Xpar^3\cap U:
\|x^{img}-\tilde{x}^{img}\|_{\mathbb{X}^{img}}\leq C_{ex}
\|F^{ex}(x^{img})-F^{ex}(\tilde{x}^{img})\|_{\mathbb{Y}^{3N}}
\end{equation}
for $\mathbb{X}^{img}$, $F^{ex}$ defined in \eqref{X}, \eqref{F_ex}.
For this purpose, we show regularity of the linearization 
\begin{equation}\label{stab_ex_lin}
\forall  \uld{x}\in \Xpar^3:
\|\uld{x}^{img}\|_{\mathbb{X}^{img}}\leq C_{ex}'
\|{F^{ex}}'(x^{img}_{\text{ref}})\uld{x}^{img}\|_{\mathbb{Y}^{3N}} 
\end{equation}
at a reference point 
$x^{img}_{\text{ref}}=(M^{eq}_{\text{ref}},R_{1,\text{ref}},R_{2,\text{ref}}^*)$ which we choose such that 
\begin{equation}\label{MrefR1refR2ref}
|M^{eq}_{\text{ref}}(\vec r)|\geq \underline{M}^{eq}_{\text{ref}}>0, \quad {\vec r}\in\Omega, 
\quad \nabla R_{1,\text{ref}}=0, \quad \nabla R_{2,\text{ref}}^*=0,
\end{equation}
e.g., $R_{1,\text{ref}}\equiv R_{1,0}$, $R_{2,\text{ref}}^*\equiv R_{2,0}^*$, with $R_{2,0}^*$ as in \eqref{R2effdecomp}.

From
\begin{equation}\label{F_ex_formulas}
\begin{aligned}
&(F_{(I)}^{ex}(x^{img}))(t)=
-i \int_{\mathbb{R}^3} e^{-R_2^*(\vec r)(t-\taup)} c^-_j(\vec r)\,M^{eq}(\vec r) \, e^{-2\pi i {\vec k}(t)\cdot{\vec r}}  \,\textsf{d}\vec r\\
&(F_{(J)}^{ex}(x^{img}))(t)=
-i \int_{\mathbb{R}^3} e^{-R_2^*(\vec r)(t-t_J)} c^-_j(\vec r)\,
(1-2e^{-R_1(\vec r)\tauone})M^{eq}(\vec r) \, e^{-2\pi i {\vec k}(t)\cdot{\vec r}}  \,\textsf{d}\vec r\\
&J\in\{II,III\}
\end{aligned}
\end{equation}
with 
$t_J=\tau^{(J)}+2\taup$,
cf. \eqref{Mperp_ex_90_rect_B}, \eqref{Mperp_ex_180-tau-90_rect_B}, we obtain 
\begin{equation}\label{F_ex_derivativeI}
\begin{aligned}
&({F_{(I)}^{ex}}'(x^{img})\uld{x}^{img})(t)\\
&=-i \int_{\mathbb{R}^3} e^{-R_{2,\text{ref}}^*(t-\taup)} c^-_j(\vec r)
\Bigl(-\uld{R}_2^*(\vec r)(t-\taup)\,M^{eq}_{\text{ref}}(\vec r)+\uld{M}^{eq}(\vec r)\Bigr) \, e^{-2\pi i {\vec k}(t)\cdot{\vec r}}  \,\textsf{d}\vec r\\
&=-i e^{-R_{2,\text{ref}}^*(t-\taup)} \left(\mathcal{F}\left[c^-_j
\Bigl(-\uld{R}_2^*(t-\taup)\,M^{eq}_{\text{ref}}+\uld{M}^{eq}\Bigr)\right]\right) ({\vec k}(t))
\end{aligned}
\end{equation}
\[
\begin{aligned}
&({F_{(J)}^{ex}}'(x^{img})\uld{x}^{img})(t)\\
&=-i \int_{\mathbb{R}^3} e^{-R_{2,\text{ref}}^*(t-t_J)} c^-_j(\vec r)
\Bigl((1-2e^{-R_{1,\text{ref}}\tau^{(J)}})\bigl(-\uld{R}_2^*(\vec r)(t-t_J)\,M^{eq}_{\text{ref}}(\vec r)+\uld{M}^{eq}(\vec r)\bigr)\\
&\hspace*{5cm}+2e^{-R_{1,\text{ref}}\tau^{(J)}}\uld{R}_1(\vec r)\tau^{(J)}\,M^{eq}_{\text{ref}}(\vec r) \Bigr) \, e^{-2\pi i {\vec k}(t)\cdot{\vec r}}  \,\textsf{d}\vec r\\
&=-i e^{-R_{2,\text{ref}}^*(t-t_J)} \Bigl(\mathcal{F}\Bigl[c^-_j
\Bigl((1-2e^{-R_{1,\text{ref}}\tau^{(J)}})\bigl(-\uld{R}_2^*(t-t_J)\,M^{eq}_{\text{ref}}+\uld{M}^{eq}\bigr)\\
&\hspace*{5cm}
+2e^{-R_{1,\text{ref}}\tau^{(J)}}\uld{R}_1\tau^{(J)}\,M^{eq}_{\text{ref}} \Bigr)\Bigr]\Bigr) ({\vec k}(t))
\\
&J\in\{II,III\}.
\end{aligned}
\]
This is indeed the Fr\'{e}chet derivative of $F^{ex}$ in the topology of the spaces $\mathbb{X}^{img}$, $\mathbb{Y}^{3N}$ defined in \eqref{X}, \eqref{Y}. 

Dependency on $t$ under the Fourier transform operator can be eliminated by differentiation with respect to time 
(see, e.g., the appendix), 
or by taking combinations of these values, as we do here in order to avoid ill-posedness due to differentiation. (In fact, differentiation would be impossible in our function space setting $\mathbb{Y}=L^q(0,T)$.)
\begin{equation}\label{R1fromF}
\begin{aligned}
&y_{(J)(I)}(t)\\
:=&ie^{R_{2,\text{ref}}^*(t-t_J)}({F_{(J)}^{ex}}'(x^{img})\uld{x}^{img})(t)\\
&-(1-2e^{-R_{1,\text{ref}}\tau^{(J)}})ie^{R_{2,\text{ref}}^*(t-\taup)}({F_{(I)}^{ex}}'(x^{img})\uld{x}^{img})(t)\\
=&
\left(\mathcal{F}\left[c^-_j
\Bigl((1-2e^{-R_{1,\text{ref}}\tau^{(J)}})\,\uld{R}_2^*(t_J-\taup)\,M^{eq}_{\text{ref}}
+2 e^{-R_{1,\text{ref}}\tau^{(J)}}\tau^{(J)}\,\uld{R}_1\,M^{eq}_{\text{ref}} \Bigr)\right]\right) ({\vec k}(t))
\\
&J\in\{II,III\}
\end{aligned}
\end{equation}
This in its turn with $\tII-\taup=\tauone+\taup$, $\tIII-\taup=\tautwo+\taup$ yields
\begin{equation}\label{R2fromF}
\begin{aligned}
&\frac{e^{R_{1,\text{ref}}\tautwo}}{\tautwo} y_{(III)(I)}(t)
-\frac{e^{R_{1,\text{ref}}\tauone}}{\tauone} y_{(II)(I)}(t)\\
&=
(\psi(\tautwo)-\psi(\tauone)
\left(\mathcal{F}\left[c^-_j \,\uld{R}_2^*M^{eq}_{\text{ref}} \right]\right) ({\vec k}(t)).
\end{aligned}
\end{equation}
with $\psi(\tau)=e^{R_{1,\text{ref}}\tau}(1-2e^{-R_{1,\text{ref}}\tau})(1+\tfrac{\taup}{\tau})$.
Therefore, we successively obtain 
$\mathcal{F}\left[c^-_j \,\uld{R}_2^*M^{eq}_{\text{ref}} \right]\circ{\vec k}$, 
$\mathcal{F}\left[c^-_j \,\uld{R}_1M^{eq}_{\text{ref}} \right]\circ{\vec k}$, 
and $\mathcal{F}\left[c^-_j \,\uld{M}^{eq} \right]\circ{\vec k}$ from \eqref{R2fromF}, \eqref{R1fromF}, \eqref{F_ex_derivativeI}. 
This together with \eqref{CI} and the fact that we have defined the topology on $\mathbb{X}^{img}$ by $L^q(\Omega)$ spaces and $M^{eq}_{\text{ref}}$ is bounded away from zero, implies existence of a constant $C_{ex}'$ such that \eqref{stab_ex_lin} holds.
The Implicit Function Theorem thus allows us to conclude that \eqref{stab_ex} holds in a neighborhood of $x^{img}_{\text{ref}}$.
\begin{proposition}
Let $c_j^-$, $j\in\{1,\ldots,N\}$, ${\vec G}\in L^\infty(0,T;\mathbb{R}^3)$ piecewise continuous, $\vec k$ according to \eqref{kt} and $\Xpar\subseteq L^2(\Omega)\cap L^\infty(\Omega)$ be chosen such that \eqref{CI} holds; moreover, let $0<\tauone<\tautwo$.
\\
For any $x^{img}_{\text{ref}}$ satisfying \eqref{MrefR1refR2ref}, 
there exist $\rho>0$, $C_{ex}>0$, $C_{ex}'>0$, such that \eqref{stab_ex}, \eqref{stab_ex_lin} hold with $U=\mathcal{B}_\rho^{\mathbb{X}^{img}}(x^{img}_{\text{ref}})$.
\end{proposition}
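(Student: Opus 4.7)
The proof has two parts: establishing the linearized bound \eqref{stab_ex_lin} at $x^{img}_{\text{ref}}$, and then lifting it to the nonlinear bound \eqref{stab_ex} in a neighborhood via a standard inverse-mapping argument. The algebraic backbone for the linearized step has essentially been laid out already in \eqref{F_ex_derivativeI}--\eqref{R2fromF}; what the proof must supply is invertibility at each stage, a uniform lower bound for the decoupling factor produced by using two distinct intermission times, and the collection of continuity constants.

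For the linearized estimate I would proceed through the three-stage elimination exactly as in the text. Stage 1: thanks to \eqref{MrefR1refR2ref} the quantities $R_{1,\text{ref}},R_{2,\text{ref}}^*$ are constants, so the factors $ie^{R_{2,\text{ref}}^*(t-\cdot)}$ are bounded and bounded away from zero on $(0,T)$ and multiplication by them is a bi-Lipschitz operation on $\mathbb{Y}=L^q(0,T)$; forming $y_{(J)(I)}$ as in \eqref{R1fromF} is likewise a bounded operation on $\mathbb{Y}$. Stage 2: the combination in \eqref{R2fromF} isolates $\mathcal{F}[c_j^-\,\uld{R}_2^*\,M^{eq}_{\text{ref}}]\circ\vec k$ up to the multiplicative constant $\psi(\tautwo)-\psi(\tauone)$, which one checks is nonzero whenever $\tauone\neq\tautwo$ (for $\taup$ small compared to $\tauone$, $\psi(\tau)\approx e^{R_{1,\text{ref}}\tau}-2$ is strictly increasing in $\tau$, and the full expression can be treated as a smooth perturbation thereof). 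Stage 3: back-substitution into \eqref{R1fromF} and then into \eqref{F_ex_derivativeI} produces $\mathcal{F}[c_j^-\,\uld{R}_1\,M^{eq}_{\text{ref}}]\circ\vec k$ and $\mathcal{F}[c_j^-\,\uld{M}^{eq}]\circ\vec k$ as bounded linear combinations of the data. Applying the interpolation bound \eqref{CI} to each of $\uld{R}_2^*\,M^{eq}_{\text{ref}}$, $\uld{R}_1\,M^{eq}_{\text{ref}}$, $\uld{M}^{eq}$, and using the pointwise bound $|M^{eq}_{\text{ref}}|\geq \underline{M}^{eq}_{\text{ref}}>0$ to divide out $M^{eq}_{\text{ref}}$ in the first two, then delivers \eqref{stab_ex_lin} with an explicit constant $C_{ex}'$ depending only on $\underline{M}^{eq}_{\text{ref}}$, $C_{\mathcal{I}}$, $\|c_j^-\|$, $T$, $\taup$, $\tauone$, $\tautwo$, $R_{1,\text{ref}}$, $R_{2,\text{ref}}^*$.

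For the nonlinear step I would invoke the standard perturbation-of-invertibility argument. The formulas \eqref{F_ex_formulas} are smooth in $(M^{eq},R_1,R_2^*)$, so $F^{ex}$ is Fr\'{e}chet differentiable with derivative depending continuously on $x^{img}$ in the $\mathbb{X}^{img}$ topology; hence for $x^{img}\in \mathcal{B}_\rho^{\mathbb{X}^{img}}(x^{img}_{\text{ref}})$ with $\rho$ small enough one has $\|{F^{ex}}'(x^{img})-{F^{ex}}'(x^{img}_{\text{ref}})\|<1/(2C_{ex}')$, and the Banach lemma yields the uniform lower bound $\|{F^{ex}}'(x^{img})\uld{x}^{img}\|_{\mathbb{Y}^{3N}}\geq (2C_{ex}')^{-1}\|\uld{x}^{img}\|_{\mathbb{X}^{img}}$ throughout that ball. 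Integrating this bound along the line segment joining $x^{img}$ and $\tilde{x}^{img}$ gives \eqref{stab_ex} with $C_{ex}=2C_{ex}'$. The main obstacle is the nondegeneracy $\psi(\tauone)\neq\psi(\tautwo)$: it is precisely what forces the use of two distinct intermission times and so is the structural core of the uniqueness argument, whereas a secondary technical point is the tacit assumption that $M^{eq}_{\text{ref}}$ acts as a pointwise multiplier of $\Xpar$, so that the products $\uld{R}_j\,M^{eq}_{\text{ref}}$ remain in $\Xpar$ and \eqref{CI} applies; this is a mild condition satisfied for any reasonable choice of $M^{eq}_{\text{ref}}$ (e.g.\ smooth and bounded above and below) compatible with \eqref{MrefR1refR2ref}.
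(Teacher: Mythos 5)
Your proposal follows essentially the same route as the paper: the three-stage elimination \eqref{F_ex_derivativeI}--\eqref{R2fromF}, then the interpolation bound \eqref{CI} together with $|M^{eq}_{\text{ref}}|\geq\underline{M}^{eq}_{\text{ref}}>0$ to get \eqref{stab_ex_lin}, and finally a local inversion argument (the paper cites the Implicit Function Theorem, you use the equivalent Banach-lemma/segment-integration argument) to obtain \eqref{stab_ex}. Your explicit attention to the nondegeneracy $\psi(\tauone)\neq\psi(\tautwo)$ (valid for $\taup$ small relative to $\tauone$) and to $M^{eq}_{\text{ref}}$ acting as a multiplier of $\Xpar$ only makes explicit points the paper leaves implicit, so the proof is correct and matches the paper's.
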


\subsubsection{Recovery of 
\allximgDzero{$M^{eq}$, $R_1$ and $R_2^*$ from Bloch-Torrey}
$(M^{eq},R_1)$ from Bloch-Torrey and $(M^{eq},R_1,R_2^*)$ from Bloch 
parameter-to state map}\label{subsec:MRI_BT}
We are now in the position to conclude uniqueness and stability for the general inverse problem \eqref{Fxy} with $F$ defined as in \eqref{F_gen} by a perturbation argument.
To this end we choose the data space according to \eqref{Y}, so that with $\mathbb{V}_\perp$ as in \eqref{V}, boundedness \eqref{CC} of the observation operator holds. 
\begin{theorem}[\revisionown{uniqueness of $(M^{eq},R_1,R_2,\delta B^0)$ in Bloch ODE}]\label{thm:uniqueness_perturb_D0}
Let $\Omega$ be a bounded Lipschitz domain, 
assume that $c_j^-\in L^\infty(\mathbb{R}^3)$ satisfies \eqref{supp_cj-}, $j\in\{1,\ldots,N\}$, 
let ${\vec G}\in L^\infty(0,T;\mathbb{R}^3)$ be piecewise continuous with $\vec k$ according to \eqref{kt}, 
and let $\Xpar\subseteq L^2(\Omega)\cap L^\infty(\Omega)$ be chosen such that \eqref{CI} holds; 
moreover, let $0<\tauone<\tautwo$.
\\
For any $x^{img}_{\text{ref}}$ satisfying \eqref{MrefR1refR2ref} for some $R_{1,\text{ref}},\,R_{2,\text{ref}}>0$, $\underline{M}^{eq}_{\text{ref}}>0$, there exist $\rho_{\taup}>0$, $\rho_{\pp}>0$, 
$\rho_{c^+}>0$, $\rho>0$, $C>0$, such that for 
\[
\begin{aligned}
&0<\taup\,<\rho_{\taup}\\
&\max\{\|{\pp}_{(I)}-{\pp}_{90^\circ}\|_{L^2(0,T)}, \, 
\max_{J\in\{II,\,III\}}\|{\pp}_{(J)}-{\pp}_{180^\circ-\tau^{(J)}-90^\circ}\|_{L^2(0,T)}\} \, < \rho_{\pp},\quad \\
&\vec v=0, \qquad D=0\\		
&c^+\in L^\infty(\Omega;\mathbb{C}), \ 
\|c^+-c^+_0\|_{L^\infty(\Omega;\mathbb{C}\revision{)}}<\rho_{c^+} \text{ with $c^+_0$ as in \eqref{90_rect}, \eqref{180-tau-90_rect},}
\end{aligned}
\]
\Margin{R2 8)}
the local stability and uniqueness estimate
\[
\forall x^{img},\tilde{x}^{img}\, \in \Xpar^3\cap \mathcal{B}_\rho^{\mathbb{X}^{img}}(x^{img}_{\text{ref}}):
\|x^{img}-\tilde{x}^{img}\|_{\mathbb{X}^{img}}\leq C
\|F(x^{img})-F(\tilde{x}^{img})\|_{\mathbb{Y}^{3N}}
\] 
holds.
\end{theorem}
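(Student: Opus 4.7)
The plan is to transfer the local Lipschitz stability \eqref{stab_ex} for the explicit-formula operator $F^{ex}$ (established in the preceding proposition) to the full operator $F$ via a perturbation argument. The mechanism is classical: if the linearization $F'(x^{img}_{\text{ref}})$ is close in operator norm to ${F^{ex}}'(x^{img}_{\text{ref}})$, which is bounded below by $(C_{ex}')^{-1}$ on $\Xpar^3$, then $F'(x^{img}_{\text{ref}})$ is also bounded below; Lipschitz continuity of $F'$ around $x^{img}_{\text{ref}}$ then extends this lower bound to a ball, and integration along line segments delivers the nonlinear Lipschitz estimate for $F$.

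To make the operator-norm closeness of the linearizations quantitative, I would introduce an intermediate operator $\check F$ that agrees with $F$ but uses the idealized rectangular pulses ${\pp}_{90^\circ}$, ${\pp}_{180^\circ-\tauone-90^\circ}$, ${\pp}_{180^\circ-\tautwo-90^\circ}$ and the constant $c^+_0$ in place of ${\pp}_{(I)},{\pp}_{(II)},{\pp}_{(III)}$ and $c^+$, and decompose $F-F^{ex} = (F-\check F) + (\check F - F^{ex})$. The second difference is bounded by Proposition~\ref{prop:perturb_expl-sol} composed with the bounded observation operator from \eqref{CC}, hence is $O(\taup)$; the first is bounded by Proposition~\ref{prop:perturb_gen} applied componentwise, hence is $O(\rho_\pp + \rho_{c^+})$, uniformly for $x^{img}$ in a bounded set. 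An analogous decomposition at the level of derivatives uses Propositions~\ref{prop:perturb_expl-sol_lin} and \ref{prop:perturb_gen_deriv}, yielding $\|F'(x^{img}_{\text{ref}}) - {F^{ex}}'(x^{img}_{\text{ref}})\|_{\mathbb{X}^{img}\to\mathbb{Y}^{3N}} \le \eta(\taup,\rho_\pp,\rho_{c^+})$ with $\eta\to 0$ as its arguments tend to zero. Choosing $\rho_{\taup},\rho_\pp,\rho_{c^+}$ so small that $\eta \le \tfrac12 (C_{ex}')^{-1}$, the standard perturbation lemma gives $F'(x^{img}_{\text{ref}})$ bounded below by $\tfrac12 (C_{ex}')^{-1}$ on $\Xpar^3$. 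Proposition~\ref{prop:perturb_gen_deriv} then provides Lipschitz continuity of $F'$ with constant $C_{\mathcal{C}}\,C_{\mathcal{S}'}$, so a further shrinking of $\rho$ preserves the lower bound $\tfrac14 (C_{ex}')^{-1}$ throughout $\mathcal{B}_\rho^{\mathbb{X}^{img}}(x^{img}_{\text{ref}})$. Writing $F(x^{img})-F(\tilde x^{img}) = \int_0^1 F'(\tilde x^{img}+s(x^{img}-\tilde x^{img}))(x^{img}-\tilde x^{img})\,ds$ and using convexity of the linear subspace $\Xpar^3$, the claimed estimate follows with $C = 4 C_{ex}'$.

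The main technical point I expect to face is keeping the constants $C_{\mathcal{S}}, C_{\mathcal{S}'}$ from Propositions~\ref{prop:perturb_gen}, \ref{prop:perturb_gen_deriv} uniform over the admissible ranges so that $\eta$ really can be forced small. Since those constants depend only on $L^\infty$- and algebra-type norms of the arguments, the bounds $\|x^{img}\|_{\mathbb{X}^{img}} \le \|x^{img}_{\text{ref}}\|_{\mathbb{X}^{img}} + \rho$ and $\|{\pp}_{(\cdot)}\|_{L^1} \le \|{\pp}_\cdot\|_{L^1} + \rho_\pp$ can be absorbed into fixed constants, provided the parameters are chosen in the right order: first fix overall radii for $c^+$ and ${\pp}_{(\cdot)}$ to freeze $C_{\mathcal{S}},C_{\mathcal{S}'},C_{\mathcal{C}}$; then shrink $\rho_{\taup},\rho_\pp,\rho_{c^+}$ to achieve $\eta \le \tfrac12 (C_{ex}')^{-1}$; then finally shrink $\rho$ to maintain the lower bound throughout $\mathcal{B}_\rho^{\mathbb{X}^{img}}(x^{img}_{\text{ref}})$. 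A minor additional check is that the whole perturbation/mean-value chain stays within $\Xpar^3$, which is immediate since $\Xpar$ is a linear subspace.
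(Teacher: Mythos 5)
Your proposal is correct and follows essentially the same route as the paper: both transfer the linearized stability estimate \eqref{stab_ex_lin} for $F^{ex}$ to $F$ by bounding $\|F'(x^{img}_{\text{ref}})-{F^{ex}}'(x^{img}_{\text{ref}})\|$ via Propositions~\ref{prop:perturb_expl-sol}/\ref{prop:perturb_expl-sol_lin} (the $O(\taup)$ part) and Propositions~\ref{prop:perturb_gen}/\ref{prop:perturb_gen_deriv} (the $\rho_{\pp},\rho_{c^+}$ part) together with \eqref{CC}. The only difference is presentational: where the paper concludes by invoking the Implicit Function Theorem with $C>\frac{C_{ex}'}{1-cC_{ex}'}$, you spell out the same step by hand (Neumann-type perturbation of the lower bound, Lipschitz continuity of $F'$ on the ball, and the mean-value integral representation), which is a valid, slightly more explicit version of the identical argument.
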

\begin{proof}
The perturbation estimates from Propositions~\ref{prop:perturb_expl-sol}, \ref{prop:perturb_gen_deriv} together with boundedness of the observation operator \eqref{CC} yield
\[
\|F'(x^{img}_{\text{ref}})-F_{ex}'(x^{img})\|_{L(\mathbb{X}^{img},\mathbb{Y}^{3N})}
\leq C_{\mathcal{C}}\bigl(C_{\mathcal{S}_{ex}}\,\rho_{\taup} +C_{\mathcal{S}}(\rho_{\pp}
+\rho_{c^+})\bigr)=:c<\frac{1}{C_{ex}'}
\]
for sufficiently small $\rho_{\taup}>0$, $\rho_{\pp}>0$, 
$\rho_{c^+}>0$.
Combining this with \eqref{stab_ex_lin} and applying the Implicit Function Theorem implies the assertion with any $C>\frac{C_{ex}'}{1-cC_{ex}'}$ and $\rho>0$ small enough. 
\end{proof}

Since we had to assume that $\vec v=0$, $D=0$, Theorem~\ref{thm:uniqueness_perturb_D0} is is still a result for the Bloch equation, but with excitations that may deviate from the elementary pulse sequences used for the explicit reconstruction formulas in 
Section~\ref{sec:explsol}.
The reason for this is that a perturbation argument would require smallness of $\vec{v}$ and $D$ in the $D$-dependent norm $L^\infty_{D^{-1/2}}(\Omega)$, whereas it is obvious that 
$\|D\|_{L^\infty_{D^{-1/2}}(\Omega;\mathbb{R}^{3\times 3})}=1$.

This can be amended by making use of the k-space explicit solution formulas, that allow for nonzero (but constant) $D_0$, along with Remark~\ref{rem:transform_kspace} as well as linearization around a reference point $x^{img}_{\text{ref}}$ satisfying \eqref{MrefR1refR2ref}.
Indeed, for $x^{mod}=(D_0,0,c^+_0,G,\pp)$, the derivative $\uldMpp=\frac{\partial\mathcal{S}}{\partial x^{img}}(x^{img}_{\text{ref}})\uld{x}^{img}$ is defined by the solution to \eqref{eqn:bloch-torrey-complex_rot_FT_f} with   
\[
\begin{aligned}
&\fpp(t,\vec r)=-\uld{R_2^*}(\vec r){\Mpp}_{\text{ref}}(t,\vec r), \\
&f_z(t,\vec r)=-\uld{R_1}(\vec r)({M_z}_{\text{ref}}(t,\vec r)-M^{eq}_{\text{ref}}(\vec r))+
R_{1,0}(\vec r)\uld{M^{eq}}(\vec r),
\end{aligned}
\]
where ${\Mpp}_{\text{ref}}=\mathcal{S}(x^{img}_{\text{ref}};x^{mod}_{\text{ref}})$ and ${M_z}_{\text{ref}}$ is the corresponding longitudinal part.

This in the constant coil sensitivity case \eqref{eqn:mri_measurement_Fourier_cj0} yields (see the appendix 
for details)
\Margin{R1 C.}
\Margin{R2 (3)}
\begin{equation}\label{FIprimeD0}
\begin{aligned}
&\frac{1}{c_{j,0}^-}\Bigl(F_{(I)j}'(x^{img}_{\text{ref}})\uld{x}^{img}\Bigr)(t)
\\
&\dot{=}
-i\,e^{-\int_{\taup}^t(\Dterm{({\vec k}(t)-\vec k(\sigma))}+R_{2,0}^*)\, d\sigma}\,
\widehat{\uld{M}^{eq}}({\vec k}(t))
\\&+i\,\int_{\taup}^t e^{-\int_a^t(\Dterm{({\vec k}(t)-\vec k(\sigma))}+R_{2,0}^*)\, d\sigma}
\int_{\mathbb{R}^3}\widehat{\uld{R_2^*}}({\vec k}(t)-\zeta')\\
&\qquad\qquad e^{-\int_{\taup}^a(\Dterm{(\vec\zeta'-\vec k(\sigma))} +R_{2,0}^*)\, d\sigma}\,\widehat{M^{eq}}(\vec\zeta')
\,d\zeta'\, da
\end{aligned}
\end{equation}
\begin{equation}\label{FIIprimeD0}
\begin{aligned}
&\frac{1}{c_{j,0}^-}\Bigl(F_{(II)j}'(x^{img}_{\text{ref}})\uld{x}^{img}\Bigr)(t)
\\
&\dot{=}
-i\,e^{-\int_{\tau+2\taup}^t(\Dterm{({\vec k}(t)-\vec k(\sigma))}+R_{2,0}^*)\, d\sigma}\\
&\qquad\qquad\Bigl(\widehat{\mathbb{L}_\tau \uld{M}^{eq}}
-\mathcal{F}\left[\int_\taup^{\tau+\taup}e^{-\mathcal{A}_1(\tau+\taup-s)}\uld{R}_1(\mathbb{L}_{s-\taup}-1)M^{eq}\right]\Bigr)({\vec k}(t))
\\&+i\,\int_{\tau+2\taup}^t e^{-\int_a^t(\Dterm{({\vec k}(t)-\vec k(\sigma))}+R_{2,0}^*)\, d\sigma}
\int_{\mathbb{R}^3}\widehat{\uld{R_2^*}}({\vec k}(t)-\zeta')\\
&\qquad\qquad e^{-\int_{\tau+2\taup}^a(\Dterm{(\vec\zeta'-\vec k(\sigma))} +R_{2,0}^*)\, d\sigma}\,\widehat{\mathbb{L}_\tau M^{eq}}(\vec\zeta')
\,d\zeta\, da\\
\end{aligned}
\end{equation}
where 
\[
\begin{aligned}
&\mathbb{L}_\tau=R_{1,0}\mathcal{A}_1^{-1}(1-e^{-\mathcal{A}_1\tau})-e^{-\mathcal{A}_1\tau}\\
&\mathcal{A}_1=-\nabla\cdot(D_0\nabla\, \cdot)+R_{1,0},
\end{aligned}
\]
and analogously for $F_{(III)}$.

Here the $\dot{=}$ approximations can be quantified to be $O(\taup)$, analogously to Proposition~\ref{prop:perturb_expl-sol_lin}, cf. Remark~\ref{rem:perturb_kspace}.

\allximgDzero{
With the same elimination procedure as in Section~\ref{subsec:MRIex}, this leads to a uniqueness and stability estimate 
\begin{equation}\label{stab_0_lin}
\forall  \uld{x}\in \Xpar^3:
\|\uld{x}^{img}\|_{\mathbb{X}^{img}}\leq C_0'
\|F'(x^{img}_{\text{ref}})\uld{x}^{img}\|_{\mathbb{Y}^{3N}}.
\end{equation}
for $D\equiv D_0$, $\vec{v}\equiv 0$,$c^+\equiv c^+_0$, $c_j^-\equiv {c_j}^-_0$.
}

An elimination procedure as in Section~\ref{subsec:MRIex} does not appear feasible here any more. However, for $\uld{R_2^*}=0$, one can obviously 
recover $\uld{M^{eq}}$ from $F_{(I)}'(x^{img}_{\text{ref}})\uld{x}^{img}$ and then 
recover $\uld{R_1}$ from $F_{(II)}'(x^{img}_{\text{ref}})\uld{x}^{img}$.
Thus reducing $x^{img}$ and $F$ to 
\begin{equation}\label{xred_Fred}
\xred^{img}=(M^{eq},R_1),
\qquad 
\Fred(\xred^{img})=\left(\begin{array}{c}
\mathcal{C}\mathcal{S}(\xred^{img},R_2^*;\check{x}^{mod},{\pp}_{(I)})\\
\mathcal{C}\mathcal{S}(\xred^{img},R_2^*;\check{x}^{mod},{\pp}_{(II)})\\
\end{array}\right),
\end{equation}
we obtain a uniqueness and stability estimate 
\begin{equation}\label{stab_0_lin}
\forall  \uld{\xred}\in \Xpar^2:
\|\uld{\xred}^{img}\|_{\mathbb{X}^{img}}\leq C_0'
\|\Fred'(\xred^{img}_{\text{ref}})\uld{\xred}^{img}\|_{\mathbb{Y}^{2N}}.
\end{equation}
for $D\equiv D_0$, $\vec{v}\equiv 0$,$c^+\equiv c^+_0$, $c_j^-\equiv {c_j}^-_0$.

This analogously to Theorem~\ref{thm:uniqueness_perturb_D0} leads to

\begin{theorem}[\revisionown{uniqueness of $(M^{eq},R_1)$ in Bloch-Torrey PDE}]\label{thm:uniqueness_perturb_red}
Let $\Omega$ be a bounded Lipschitz domain, 
assume that $c_j^-\in L^\infty(\mathbb{R}^3)$ satisfies \eqref{supp_cj-}, $j\in\{1,\ldots,N\}$, 
let ${\vec G}\in L^\infty(0,T;\mathbb{R}^3)$ be piecewise continuous with $\vec k$ according to \eqref{kt}, 
and let $\Xpar\subseteq L^2(\Omega)\cap L^\infty(\Omega)$ be chosen such that \eqref{CI} holds; 
moreover, let $0<\tauone$ and $D_0\in\mathbb{R}^{3\times3}$ positive semidefinite.
\\
For any $x^{img}_{\text{ref}}$ satisfying \eqref{MrefR1refR2ref} for some $R_{1,\text{ref}},\,R_{2,\text{ref}}>0$, $\underline{M}^{eq}_{\text{ref}}>0$, there exist $\rho_{\taup}>0$, $\rho_{\pp}>0$, 
$\rho_2>0$, 
$\rho_v>0$, $\rho_D>0$, 
$\rho_{c^+}>0$, $\rho>0$, $C>0$, such that for 
\[
\begin{aligned}
&0<\taup\,<\rho_{\taup}\\
&\max\{\|{\pp}_{(I)}-{\pp}_{90^\circ}\|_{L^2(0,T)}, \, 
\|{\pp}_{(II)}-{\pp}_{180^\circ-\tau^{(II)}-90^\circ}\|_{L^2(0,T)}\} \, < \rho_{\pp},\quad \\
&R_2^*\in L^\infty(\Omega;\mathbb{C}) , \ 
\|R_2^*-R_{2,0}^*\|_{L^\infty_(\Omega;\mathbb{C})}<\rho_2\\	
&\vec v\in L^1(0,T;W^{1,\infty}(\Omega)) \text{ satisfying \eqref{divv}}, \quad 
\|\vec v\|_{L^\infty_{D_0^{-1/2}}(\Omega;\mathbb{R}^3)}<\rho_v\\ 	
&D\in L^\infty(\Omega;\mathbb{R}^{3\times 3})\text{ nonnegative definite} , \ 
\|D-D_0\|_{L^\infty_{D_0^{-1/2}}(\Omega;\mathbb{R}^{3\times 3})}<\rho_D\\	
&c^+\in L^\infty(\Omega;\mathbb{C}), \ 
\|c^+-c^+_0\|_{L^\infty(\Omega;\mathbb{C}}<\rho_{c^+} \text{ with $c^+_0$ as in \eqref{90_rect}, \eqref{180-tau-90_rect},}
\end{aligned}
\]
the local stability and uniqueness estimate
\[
\forall \xred^{img},\tilde{\xred}^{img}\, \in \Xpar^2\cap \mathcal{B}_\rho^{\check{\mathbb{X}}^{img}}(\xred^{img}_{\text{ref}}):
\|\xred^{img}-\tilde{\xred}^{img}\|_{\check{\mathbb{X}}^{img}}\leq C
\|\Fred(\xred^{img})-\Fred(\tilde{\xred}^{img})\|_{\mathbb{Y}^{2N}}
\] 
holds.
\end{theorem}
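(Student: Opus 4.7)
The plan is to mirror the structure of Theorem~\ref{thm:uniqueness_perturb_D0}, but using the reduced linearized stability \eqref{stab_0_lin} as the starting point instead of \eqref{stab_ex_lin}. Working with $\xred^{img}=(M^{eq},R_1)$ rather than the full $x^{img}$ (so that $R_2^*$ enters only as a fixed model datum) is what allows us to accommodate a nonzero reference diffusion $D_0$; the two-channel elimination is carried out in k-space, using the explicit expressions \eqref{FIprimeD0}--\eqref{FIIprimeD0}.

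First I would verify the baseline. Setting all model data to their reference values ($D=D_0$, $\vec v=0$, $c^+=c^+_0$, $R_2^*=R_{2,0}^*$, $\pp$ equal to the ideal rectangular pulses $\pp_{90^\circ}$, $\pp_{180^\circ-\tauone-90^\circ}$) yields a reduced forward map $\Fred_0$ whose derivative, when restricted to $\uld{R_2^*}\equiv 0$, reduces \eqref{FIprimeD0}--\eqref{FIIprimeD0} to samples of Fourier transforms along the curve $\vec k$: first of $c^-_j\uld{M^{eq}}$ (from the $90^\circ$ experiment alone), and then, upon subtracting the contribution just recovered, of a known linear expression in $\uld{R_1}$ multiplied by $M^{eq}_{\text{ref}}$. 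Applying \eqref{CI_D0} twice and using that $|M^{eq}_{\text{ref}}|\geq\underline{M}^{eq}_{\text{ref}}>0$ thanks to \eqref{MrefR1refR2ref} produces the constant $C_0'$ in \eqref{stab_0_lin}.

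Next I would quantify the distance between $\Fred'(\xred^{img}_{\text{ref}})$ and $\Fred_0'(\xred^{img}_{\text{ref}})$ in the operator norm on $L(\check{\mathbb{X}}^{img},\mathbb{Y}^{2N})$. By Proposition~\ref{prop:perturb_expl-sol_lin} (accounting for pulse duration $\taup$ and pulse shape deviations) together with Proposition~\ref{prop:perturb_gen_deriv} (accounting for deviations of $R_2^*$, $\vec v$, $D$ and $c^+$ from their references), and composing with the bounded observation operator from \eqref{CC}, this distance is bounded by a quantity $c=C_{\mathcal{C}}\bigl(C_{\mathcal{S}_{ex}'}\rho_{\taup}+C_{\mathcal{S}'}(\rho_{\pp}+\rho_2+\rho_v+\rho_D+\rho_{c^+})\bigr)$. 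Choosing all thresholds so small that $c<1/C_0'$, a Neumann-series argument yields a bounded left inverse of $\Fred'(\xred^{img}_{\text{ref}})$ of norm at most $C_0'/(1-cC_0')$. The Implicit Function Theorem, applied using Lipschitz continuity of $\Fred'$ in a neighbourhood of the reference point from Proposition~\ref{prop:perturb_gen_deriv}, then transports this linearized stability to the nonlinear Lipschitz estimate on a sufficiently small ball $\mathcal{B}_\rho^{\check{\mathbb{X}}^{img}}(\xred^{img}_{\text{ref}})$, with $C>C_0'/(1-cC_0')$.

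I expect the main obstacle to be the base estimate \eqref{stab_0_lin} itself rather than the perturbation step. The second-channel elimination requires isolating $\uld{R_1}$ from the operator $\mathbb{L}_\tauone=R_{1,0}\mathcal{A}_1^{-1}(1-e^{-\mathcal{A}_1\tauone})-e^{-\mathcal{A}_1\tauone}$ with $\mathcal{A}_1=-\nabla\cdot(D_0\nabla\,\cdot)+R_{1,0}$, after $\uld{M^{eq}}$ has been recovered from the $90^\circ$ experiment, and this must be done within the subspace $\Xpar$ without inflating the constant $C_{\mathcal I}$ in \eqref{CI_D0}. Once inversion of $\mathbb{L}_\tauone$ is handled --- and division by $M^{eq}_{\text{ref}}$ is legitimate thanks to \eqref{MrefR1refR2ref} --- the remaining perturbation-and-IFT steps are parallel to those in the proof of Theorem~\ref{thm:uniqueness_perturb_D0}.
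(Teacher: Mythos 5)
Your proposal matches the paper's argument: the paper establishes the reduced linearized estimate \eqref{stab_0_lin} exactly as you describe (setting $\uld{R}_2^*=0$, recovering $\uld{M}^{eq}$ from the $90^\circ$ channel and then $\uld{R}_1$ from the second channel via \eqref{FIprimeD0}--\eqref{FIIprimeD0} and the sampled-Fourier condition), and then proves the theorem "analogously to Theorem~\ref{thm:uniqueness_perturb_D0}" by the same perturbation bound from Propositions~\ref{prop:perturb_expl-sol}, \ref{prop:perturb_expl-sol_lin}, \ref{prop:perturb_gen_deriv} and \eqref{CC}, followed by the Implicit Function Theorem with $c<1/C_0'$. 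The step you flag as the main obstacle (isolating $\uld{R}_1$ through $\mathbb{L}_{\tauone}$ within $\Xpar$) is precisely the point the paper treats as immediate, so your level of detail is consistent with, indeed slightly more cautious than, the paper's.
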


\medskip

The above results in fact state well-posedness of the inverse problem and as a consequence, relying on Lipschitz continuous differentiability of the forward operator due to Proposition~\ref{prop:perturb_gen_deriv}, we can use the Newton-Kantorowich Theorem to conclude local and quadratic convergence of Newton's method
\begin{equation}\label{Newton}
x^{img}_{n+1}=x^{img}_n+F'(x^{img}_n)^{-1}(\underline{y}-F(x^{img}_n))
\end{equation} 
or local linear convergence of a frozen Newton method
\begin{equation}\label{frozenNewton}
x^{img}_{n+1}=x^{img}_n+F'(x^{img}_0)^{-1}(\underline{y}-F(x^{img}_n)).
\end{equation} 
In this convergence result we denote by $x^{img}_*$ an exact solution of \eqref{Fxy}
\begin{equation}\label{xstar}
F(x^{img}_*)=\underline{y}.
\end{equation}

\begin{corollary}\label{cor:Newton}
Under the conditions of Theorem~\ref{thm:uniqueness_perturb_D0}, there exists $\rho_0>0$ such that with $\|x^{img}_{\text{ref}}-x^{img}_*\|_{\mathbb{X}^{img}}<\rho_0$, Newton's method \eqref{Newton} (or the frozen Newton method \eqref{frozenNewton}) starting at $x^{img}_0$ with $\|x^{img}_0-x^{img}_*\|_{\mathbb{X}^{img}}<\rho_0$ converges quadratically (or linearly) to $x^{img}_*$.\\
An analogous statement holds under the conditions of Theorem~\ref{thm:uniqueness_perturb_red} for $x$, $F$ replaced by $\xred$, $\Fred$ as in \eqref{xred_Fred}.
\end{corollary}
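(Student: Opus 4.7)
The plan is to deduce Corollary~\ref{cor:Newton} from the Newton--Kantorovich Theorem, using as inputs the bounded invertibility of $F'$ at the reference point (built into the proof of Theorem~\ref{thm:uniqueness_perturb_D0}) together with the local Lipschitz continuity of $F'$ supplied by Proposition~\ref{prop:perturb_gen_deriv}.

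First I would extract from the proof of Theorem~\ref{thm:uniqueness_perturb_D0} the linear stability estimate \eqref{stab_ex_lin} combined with the perturbation bound $\|F'(x^{img}_{\text{ref}})-F_{ex}'(x^{img}_{\text{ref}})\|<1/C_{ex}'$ under the smallness hypotheses on $\taup$, $\pp$ and $c^+-c^+_0$. A standard Neumann-series argument then gives that $F'(x^{img}_{\text{ref}})$ is boundedly invertible from $\mathbb{X}^{img}$ (restricted to $\Xpar^3$) into $\mathbb{Y}^{3N}$, with $\|F'(x^{img}_{\text{ref}})^{-1}\|\leq \frac{C_{ex}'}{1-cC_{ex}'}=:\beta$. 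Combining this with the Lipschitz continuity of $\mathcal{S}'$ in Proposition~\ref{prop:perturb_gen_deriv} and boundedness \eqref{CC} of $\mathcal{C}$ yields a Lipschitz constant $L$ for $F'$ on a neighborhood $\mathcal{B}_{\rho_1}^{\mathbb{X}^{img}}(x^{img}_{\text{ref}})$.

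Next, by shrinking the neighborhood if necessary, the product $\beta L$ times the radius can be made arbitrarily small, so that $F'(x^{img})$ remains boundedly invertible with uniformly bounded inverse on a smaller ball $\mathcal{B}_{\rho_0}^{\mathbb{X}^{img}}(x^{img}_{\text{ref}})$. This is exactly the hypothesis set of the Newton--Kantorovich Theorem (see, e.g., Deuflhard's monograph on affine invariant Newton methods): given an initial guess $x^{img}_0$ with $\|x^{img}_0-x^{img}_*\|<\rho_0$ where $F(x^{img}_*)=\underline y$, the Newton iterates \eqref{Newton} are well defined and satisfy the quadratic error recursion $\|x^{img}_{n+1}-x^{img}_*\|\leq \tfrac12\beta L\|x^{img}_n-x^{img}_*\|^2$, yielding local quadratic convergence. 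For the frozen variant \eqref{frozenNewton} the iteration map $G(x)=x+F'(x^{img}_0)^{-1}(\underline y-F(x))$ has derivative $I-F'(x^{img}_0)^{-1}F'(x)$, whose norm is bounded by $\beta L\|x-x^{img}_0\|$ and thus by a contraction constant $<1$ on a sufficiently small ball. Banach's fixed-point theorem then gives local linear convergence to the unique fixed point $x^{img}_*$ in that ball.

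For the reduced setting under the conditions of Theorem~\ref{thm:uniqueness_perturb_red} the argument is verbatim the same after replacing $x^{img}$, $F$, $\mathbb{X}^{img}$ by $\xred^{img}$, $\Fred$, $\check{\mathbb{X}}^{img}$ and invoking \eqref{stab_0_lin} in place of \eqref{stab_ex_lin}. The only non-routine point is verifying that the Lipschitz constant of $F'$ provided by Proposition~\ref{prop:perturb_gen_deriv}, measured in the $L(\mathbb{X}^{img},\mathbb{Y}^{3N})$ operator norm via the composition $F'=\mathcal{C}\circ \mathcal{S}'$, is indeed finite uniformly on the ball in question; this is immediate because the constants $C_{\mathcal{S}'}$ and $C_{\mathcal{C}}$ depend only on $\|x^{img}_{\text{ref}}\|$, $\|x^{mod}\|$ and the geometry of $\Omega$, all of which are fixed. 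The main (mild) obstacle is therefore purely bookkeeping: ensuring that the neighborhood $\rho_0$ is chosen so that $(i)$ it lies inside the ball $\mathcal{B}_\rho^{\mathbb{X}^{img}}(x^{img}_{\text{ref}})$ of Theorem~\ref{thm:uniqueness_perturb_D0}, $(ii)$ the Neumann perturbation keeps $F'$ invertible throughout the ball, and $(iii)$ the contraction/quadratic recursion self-improves, i.e.\ the Newton iterates never leave the ball. All three conditions reduce to a single smallness requirement of the form $\beta L \rho_0<\tfrac12$, which can be met by shrinking $\rho_0$, completing the proof.
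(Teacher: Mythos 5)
Your proposal is correct and follows essentially the same route as the paper, which proves the corollary simply by invoking the Newton--Kantorovich Theorem on the basis of the bounded invertibility of $F'(x^{img}_{\text{ref}})$ established in the proof of Theorem~\ref{thm:uniqueness_perturb_D0} (via \eqref{stab_ex_lin} plus the perturbation bound) together with the Lipschitz continuity of $F'$ from Proposition~\ref{prop:perturb_gen_deriv} and boundedness \eqref{CC} of $\mathcal{C}$, with the reduced case handled analogously through \eqref{stab_0_lin}. Your write-up merely supplies the standard Neumann-series and contraction-mapping details that the paper leaves implicit, understanding as you do that invertibility is meant on the subspace $\Xpar^3$ (respectively $\Xpar^2$) onto the corresponding range.
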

\begin{remark}\label{rem:noise}
When applied with noisy data $\underline{y}^\delta$ in place of $\underline{y}$, under the conditions of Corollary~\ref{cor:Newton}, we obtain convergence of \eqref{Newton} or \eqref{frozenNewton} to  $x^{img}_\delta=F^{-1}(\underline{y}^\delta)\in\mathbb{X}^{img}$ which satisfies $\|x^{img}_\delta-x^{img}_*\|_{\mathbb{X}^{img}}\leq C\|\underline{y}^\delta-\underline{y}\|_{\mathbb{Y}^{3N}}$ with some constant $C$ depending only on $x^{img}_*$.
Again, the same holds true for $\xred$, $\Fred$ in place of $x$, $F$.
\end{remark}

\subsection{Linearized uniqueness by diffusion}\label{sec:uniqueness_diffusion}

As opposed to the uniqueness proof from the previous section, that relied on sampling of the k-space by varying $\vec G$, along with smallness of diffusion in a perturbation argument, we here make explicit use of diffusion and the resulting infinite speed of propagation to prove uniqueness from observations on a time interval with constant $\vec G\equiv \vec G_0$.
For this purpose, additionally to \eqref{definiteness}, we assume $D$ to be uniformly positive definite
\begin{equation}\label{Dposdef}
\lambda_{\min}(D(\vec r))\geq\underline{\lambda}>0, \quad \vec r\in\Omega.
\end{equation}
To avoid technical difficulties resulting from advection, we here impose $\vec v(t,\vec r)\equiv0$.

While uniqueness will be derived with $\vec G$ taking a single value here, so that only one line in k-space is traversed according to \eqref{kt}, this result comes with no stability at all and so it is advisable to combine this with sampling in k-space by using a piecewise constant field gradient. Such a combination can be carried out within a so-called Kaczmarz approach \cite{BKL:2010,NewtonKaczmarz,HLS:07,HKLS:07,aao_time,KoSc:2002,Ngu:2019}, that is, by cyclically iterating over the subproblems resulting from subdividing the time line into intervals of constant $\vec G$. Since the convergence analysis of Kaczmarz methods requires linearized 
\revision{uniqueness} 
\Margin{R2 9)}
for each subproblem \cite{NewtonKaczmarz}, the uniqueness result obtained here will be very useful for such a convergence proof.
We refer to Remark~\ref{Kaczmarz90deg} for some further thoughts on this, but postpone a full analysis of this Kaczmarz approach to future work.

The models considered in this section are obtained by means of the formal limit $\taup\searrow0$. This is feasible in spite of Remark~\ref{rem:pdelta}, due to the fact that we are not using PDE estimates in this subsection.

\subsubsection{Recovery of $M^{eq}$}
First of all, in order to illustrate the principle of proof, uniqueness is shown for the linear inverse problem of reconstructing $M^{eq}$ alone from observations obtained after a rectangular $90^\circ$ pulse, neglecting diffusion during the pulse, that is, formally letting $\taup\to0$. 
We consider a time interval $[t_0,t_1]$ after the pulse, on which the PDEs for $\Mpp$ and $M_z$ are therefore decoupled, and on which $\vec G$ is constant. 

This inverse problem hence amounts to reconstructing the initial data $\Mpp^{t_0}(\vec r)=\Mpp(t_0,\vec r)$ in the parabolic initial boundary value problem
\begin{equation}\label{ibvp}
\begin{aligned}
&\frac{d}{dt}\Mpp
-\nabla\cdot\bigl(D \nabla \Mpp\bigr) +\bigl(R_2^*+i\gamma\vec r \cdot \vec G_0\bigr)\Mpp=0\text{ in }(t_0,t_1)\times\Omega\\
&\Mpp(t_0)=0\text{ in }(t_0,t_1)\times\partial\Omega\\
&\Mpp(t_0)=\Mpp^{t_0}.
\end{aligned}
\end{equation}
Problems of this type have been studied extensively in the literature and uniqueness from final time or boundary time trace data has been established, see, e.g., \cite{BukhgeimKlibanov:1981,ImanuvilovYamamoto:1998,Isakov:2006}.
However, here we only have finitely many nonlocal observations
\begin{equation}\label{obs_D}
{\yp}_j(t)=\int_{\mathbb{R}^3} c^-_j(\vec r) \Mpp(t, \vec r) \,\textsf{d}\vec r, \quad j\in\{1,\ldots N\}, \quad t\in[t_0,t_1]
\end{equation}
which provide less information, as a simple dimension count reveals.
This fact will influence the type of uniqueness we can obtain here.

Returning for the moment to the 
$\Mpp=\left(\begin{array}{c}{\Mpp}_x\\ {\Mpp}_y\end{array}\right)$ instead of 
${\Mpp}_x+i{\Mpp}_y$ 
notation, we rewrite \eqref{ibvp} as 
\begin{equation}\label{abstractODE}
\begin{aligned}
&\frac{d}{dt}\Mpp+\mathcal{A}\Mpp =0\text{ in }(t_0,t_1)\\
&\Mpp(t_0)=\Mpp^{t_0}
\end{aligned}
\end{equation}
with the non symmetric operator\footnote{More precisely, it is the sum of a symmetric elliptic operator and a lower order skew symmetric operator}
\begin{equation}\label{mathcalA}
\mathcal{A}=
\left(\begin{array}{cc}
-\nabla\cdot\bigl(D \nabla \,.\,\bigr)+\Re(R_2^*)&\Im(R_2^*)+\gamma\vec r \cdot \vec G_0\\ 
-\Im(R_2^*)-\gamma\vec r \cdot \vec G_0&-\nabla\cdot\bigl(D \nabla \,.\,\bigr)+\Re(R_2^*)
\end{array}\right)
\end{equation}
equipped with homogeneous Dirichlet boundary conditions.

Proving uniqueness of $\Mpp^{t_0}$ from the data \eqref{obs_D}, due to linearity amounts to showing that ${\yp}_j=0$ for $j\in\{1,\ldots N\}$ in \eqref{obs_D} together with \eqref{ibvp} implies $\Mpp^{t_0}=0$.

To this end, we apply some spectral arguments taken from \cite{JiangLiPauronYamamoto2023}, where more generally, a fractional time derivative is considered, but uniqueness is shown from time trace boundary observations, which allows the authors of \cite{JiangLiPauronYamamoto2023} to employ a unique continuation argument. When it comes to this point, our analysis will therefore have to deviate from the one in \cite{JiangLiPauronYamamoto2023}.

Under condition \eqref{definiteness}, the operator $\mathcal{A}$ is bounded and coercive on $H_D^1(\Omega)$ (cf. Section~\ref{sec:wellposed}), which is isomorphic to $H^1(\Omega)$ under our uniform positivity assumption \eqref{Dposdef} on $D$. 
Its inverse $\mathcal{A}^{-1}:L^2(\Omega)\to L^2(\Omega)$ is therefore compact\footnote{Here we again need boundedness of $\Omega$} and thus 
its spectrum consists of countably many eigenvalues $\mu_m$ (which may take complex values due to nonsymmetry of $\mathcal{A}^{-1}$) tending to zero, but not including zero, since  $\mathcal{A}^{-1}$ is bijective. By Riesz theory, the eigenspaces have finite dimension $d_m$ and they coincide with the eigenspaces of $\mathcal{A}$ corresponding to the eigenvalues $\lambda_m=\frac{1}{\mu_m}$.

Analyticity with respect to time allows us to extend the identities \eqref{abstractODE} and \eqref{obs_D} (with ${\yp}_j(t)=0$) to the interval $(t_0,\infty)$ and to apply the shifted Laplace transform defined by 
\[
(\mathcal{L}_{t_0}v)(s)=\int_{t_0}^\infty v(t) e^{-st}\,dt, 
\]
which implies
\[
(s+\mathcal{A})(\mathcal{L}_{t_0}\Mpp)(s)=e^{-st_0}\Mpp^{t_0},\quad s\in\mathbb{C}\setminus\{-\lambda_m\,:\,m\in\mathbb{N}\}.
\]
Plugging this into the observations under the assumption \eqref{supp_cj-} and multiplying with $e^{st_0}$ we obtain
\begin{equation}\label{splusAinvobs}
\int_{\mathbb{R}^3} c^-_j(\vec r) ((s+\mathcal{A})^{-1}\Mpp^{t_0})(\vec r) \,\textsf{d}\vec r=0, \quad j\in\{1,\ldots N\}, \quad s\in\mathbb{C}\setminus\{-\lambda_m\,:\,m\in\mathbb{N}\}.
\end{equation}
Now, for any $\ell\in\mathbb{N}$ and a sufficiently small circle $\Gamma_\ell$ encircling $-\lambda_\ell$, the eigenprojector $P_\ell:L^2(\Omega)\to \mathcal{N}(-\lambda_\ell\text{id}+\mathcal{A})$ can be represented as 
\[
P_\ell v=\int_{\Gamma_\ell}(s+\mathcal{A})^{-1}v\,ds.
\]
Thus, integrating over $\Gamma_\ell$ with respect to $s$ in \eqref{splusAinvobs}, we obtain that 
\[
\langle \overline{c_j^-} , P_\ell \Mpp^{t_0}\rangle_{L^2}=\int_{\mathbb{R}^3} c^-_j(\vec r)\, (P_\ell \Mpp^{t_0})(\vec r) \,\textsf{d}\vec r=0, \quad j\in\{1,\ldots N\}, \quad \ell\in\mathbb{N}.
\]
Thus, we can disentangle the components of $\Mpp^{t_0}$ belonging to different eigenspaces.
 The observations available here do not allow to use unique continuation for further disentangling the components inside each eigenspace, though. By restricting the ansatz space (as we also did for the MRI isomorphism in Section~\ref{subsec:MRIiso}), we can enforce uniqueness, though.
The assumption required for this purpose is
(using $\langle \overline{c_j^-} , P_\ell \Mpp^{t_0}\rangle_{L^2}= \langle P_\ell \overline{c_j^-} , \Mpp^{t_0}\rangle_{L^2}$)
\[
\bigcap_{\ell\in\mathbb{N}} \bigcap_{j\in \{1,\ldots,N\}} (P_\ell \overline{c_j^-})^\perp \cap \Xpar =\{0\}
\]
or equivalently (by taking orthogonal complements and using $(U\cap V)^\perp=U^\perp+V^\perp$)
\[
\overline{\text{span}\{P_\ell \overline{c_j^-}\, : \, \ell\in\mathbb{N}, \ j\in \{1,\ldots,N\}\}}+\Xpar^\perp \, = L^2(\Omega) 
\]
A sufficient condition for this, relying on $\overline{\Xpar}+\Xpar^\perp \, = L^2(\Omega)$ is 
\begin{equation}\label{assXpar_uniquenessD}
\Xpar\subseteq\text{span}\{P_\ell \overline{c_j^-}\, : \, \ell\in\mathbb{N}, \ j\in \{1,\ldots,N\}\}. 
\end{equation}

\begin{theorem}
\label{thm:uniqueness_diff_Meq}
Let $\Omega$ be a bounded Lipschitz domain, 
let $c_j^-\in L^\infty(\mathbb{R}^3)$ with \eqref{supp_cj-}, $j\in\{1,\ldots,N\}$, 
let $D\in L^\infty(\Omega;\mathbb{R}^{3\times3})$ with \eqref{Dposdef},
let $R_2^*\in L^\infty(\Omega;\mathbb{C})$ with $\Re(R_2^*)\geq0$, 
and assume that the ansatz space $\Xpar$ satisfies \eqref{assXpar_uniquenessD}. 

Then the observations \eqref{obs_D} uniquely determine $\Mpp^{t_0}\in\Xpar$ in \eqref{ibvp}. 
\end{theorem}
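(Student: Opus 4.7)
The plan is to follow exactly the spectral strategy outlined in the text preceding the theorem, since it essentially provides a complete roadmap. By linearity of the map $\Mpp^{t_0}\mapsto(\yp_j)_{j=1}^N$, it suffices to show that $\yp_j\equiv 0$ on $[t_0,t_1]$ for all $j$ forces $\Mpp^{t_0}=0$ in $\Xpar$. Rewriting \eqref{ibvp} as the abstract Cauchy problem \eqref{abstractODE} with the operator $\mathcal{A}$ in \eqref{mathcalA} on $L^2(\Omega;\mathbb{R}^2)$ with homogeneous Dirichlet boundary conditions, we first establish that $\mathcal{A}$ has a compact resolvent on the bounded Lipschitz domain $\Omega$: coercivity on $H_D^1(\Omega)$ follows from \eqref{definiteness} together with the uniform positive definiteness \eqref{Dposdef} of $D$ (the skew-symmetric lower-order terms being harmless), and the embedding $H^1(\Omega)\hookrightarrow L^2(\Omega)$ is compact. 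By Riesz theory the spectrum consists of countably many eigenvalues $\{\lambda_m\}_{m\in\mathbb{N}}$ (possibly complex because $\mathcal{A}$ is not self-adjoint) with finite-dimensional generalized eigenspaces and $|\lambda_m|\to\infty$.

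Next I would exploit analyticity of $t\mapsto\Mpp(t,\cdot)$, which is a standard consequence of the fact that $-\mathcal{A}$ generates an analytic semigroup on $L^2$; both sides of \eqref{obs_D} are analytic in $t$ on $(t_0,\infty)$, so vanishing of $\yp_j$ on $[t_0,t_1]$ propagates to all of $(t_0,\infty)$. This justifies applying the shifted Laplace transform $\mathcal{L}_{t_0}$ to \eqref{abstractODE} and \eqref{obs_D}, producing
\[
(s+\mathcal{A})(\mathcal{L}_{t_0}\Mpp)(s)=e^{-st_0}\Mpp^{t_0}, \qquad \int_{\mathbb{R}^3}c_j^-(\vec r)\,((s+\mathcal{A})^{-1}\Mpp^{t_0})(\vec r)\,d\vec r=0
\]
for all $s\in\mathbb{C}\setminus\{-\lambda_m\}$ and all $j\in\{1,\dots,N\}$, where \eqref{supp_cj-} is used to pull the $L^2(\Omega)$-pairing out to an integral over $\mathbb{R}^3$.

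The third step is to harvest spectral information by contour integration. For each $\ell\in\mathbb{N}$, choose a small positively oriented circle $\Gamma_\ell\subset\mathbb{C}$ around $-\lambda_\ell$ enclosing no other spectral point of $-\mathcal{A}$; then the Riesz projector $P_\ell=\tfrac{1}{2\pi i}\int_{\Gamma_\ell}(s+\mathcal{A})^{-1}\,ds$ maps $L^2(\Omega;\mathbb{R}^2)$ onto the generalized eigenspace of $\mathcal{A}$ at $\lambda_\ell$. Integrating the identity above over $\Gamma_\ell$ with respect to $s$ and moving the integral past the bounded functional $v\mapsto\langle\overline{c_j^-},v\rangle_{L^2}$ yields
\[
\langle\overline{c_j^-},\,P_\ell\Mpp^{t_0}\rangle_{L^2(\Omega)}=\langle P_\ell^*\overline{c_j^-},\,\Mpp^{t_0}\rangle_{L^2(\Omega)}=0,\qquad j\in\{1,\ldots,N\},\ \ell\in\mathbb{N},
\]
where $P_\ell^*$ is the Riesz projector for the dual operator $\mathcal{A}^*$. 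Identifying $P_\ell^*\overline{c_j^-}$ with $P_\ell\overline{c_j^-}$ as done in the text (this uses the duality between eigenspaces of $\mathcal{A}$ and $\mathcal{A}^*$, and is where I would be most careful), and finally invoking the ansatz-space assumption \eqref{assXpar_uniquenessD}, which places $\Xpar$ inside the closed span of the family $\{P_\ell\overline{c_j^-}\}_{\ell,j}$, we conclude that $\Mpp^{t_0}$ is orthogonal to a dense subspace of $\Xpar$ that contains $\Xpar$ itself, hence $\Mpp^{t_0}=0$.

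The main obstacle I anticipate is the non-self-adjointness of $\mathcal{A}$: one must be careful that the Riesz projectors $P_\ell$ (which are oblique, not orthogonal) correctly decompose $L^2(\Omega;\mathbb{R}^2)$ and that the identification of $P_\ell^*\overline{c_j^-}$ with $P_\ell\overline{c_j^-}$ used in \eqref{assXpar_uniquenessD} is consistent. A secondary technical point is the analytic extension of $\Mpp$ and the validity of the Laplace transform identity on the resolvent set, both of which follow from analytic semigroup theory for $\mathcal{A}$ and are standard once coercivity and compact resolvent have been secured.
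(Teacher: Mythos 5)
Your proposal is correct and follows essentially the same route as the paper's own argument: abstract reformulation with the nonsymmetric operator $\mathcal{A}$, compactness of its inverse giving the eigenvalue sequence, analyticity in time plus the shifted Laplace transform, contour integration around each $-\lambda_\ell$ to obtain $\langle \overline{c_j^-},P_\ell \Mpp^{t_0}\rangle_{L^2}=0$, and then the ansatz-space condition \eqref{assXpar_uniquenessD} to force $\Mpp^{t_0}=0$. Your extra caution about the oblique (non-orthogonal) Riesz projectors and the identification of $P_\ell^*\overline{c_j^-}$ with $P_\ell\overline{c_j^-}$ is well placed, as the paper passes over this point rather briskly, but it does not change the substance of the argument.
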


Theorem~\ref{thm:uniqueness_diff_Meq} states that the linear operator 
\[
F:\Xpar\to L^2(0,T), \ \Mpp^{t_0}\mapsto \int_{\mathbb{R}^3} c^-_j(\vec r) \Mpp(\cdot, \vec r) \,\textsf{d}\vec r,\ \text{ where $\Mpp$ solves \eqref{ibvp}}
\]
is injective.
Note that this holds without the need of explicit knowledge of $R_2^*$; it just needs to have nonnegative real part.
In this sense, it is a result of reconstruction ``in an unknown medium'' in the spirit of, e.g., \cite{JinKian:2023}.

\begin{remark}\label{Kaczmarz90deg}
To embed this in a Kaczmarz context as indicated above, one can apply the above uniqueness argument on a sequence of intervals $[t_\ell,t_{\ell+1}]$ in place of $[t_0,t_1]$.
Consider a setting of $\vec G$ being (approximately) piecewise constant, that is, $\vec G(t)=\vec G_\ell$ on $[t_\ell,t_\ell+\tau_\ell]$, with a $90^\circ$ pulse in between $t_{\ell+1}= t_\ell+\tau_\ell+\taup$, $\tau_\ell>>\taup$, which due to (partial) recovery of $M^{eq}$ in longitudinal direction during $\tau_\ell$ implies that the $90^\circ$ pulse in $[t_{\ell+1}-\taup,t_{\ell+1}]$ turns a nonvanishing  fraction of $M^{eq}$ into transversal direction, which is thus visible to $M_\perp$ on $[t_{\ell+1},t_{\ell+1}+\tau_{\ell+1}]$.
Based on this fact, the uniqueness result proven here will allow to conclude uniqueness of $M^{eq}$ also from observations on the next interval $[t_{\ell+1},t_{\ell+1}+\tau_{\ell+1}]$.
A full substantiation of this argument and its use for proving convergence of a Kaczmarz method that is based on the subdivision of the timeline into the intervals $[t_\ell,t_{\ell+1})$, $\ell=0,1,2,\ldots$ will be subject of future research.
\end{remark}

\subsubsection{Recovery of $(M^{eq},R_1,R_2^*)$; linearized uniqueness}

\medskip
We extend Theorem~\ref{thm:uniqueness_diff_Meq} to a result on linearized uniqueness for the nonlinear inverse problem of simultaneously identifying $(M^{eq},R_1,R_2^*)$ when complementing the evolution \eqref{ibvp} after a $90^\circ$ pulse with two further $180^\circ-\tau-90^\circ$ pulse evolutions with different intermission times $\tau$. This is the same set of experiments as in Section~\ref{sec:uniqueness_perturbation}.

We consider the limit as $\taup\searrow0$ so that the dot in $\dot{=}$ of, e.g., \eqref{init90eps}, \eqref{init180tau90eps}, \eqref{init180tau90tauplus2eps} can be skipped. For this purpose, we will use the shorthand notation $v(t_0^+)=\lim_{t\searrow t_0}v(t)$, $v(t_0^-)=\lim_{t\nearrow t_0}v(t)$.
Neglecting diffusion during the infinitesimally short pulses but taking it into account during the intermission times and after the second pulse, we arrive at the following model, which we cast into an all-at-once formulation
\[
\begin{aligned}
&\mathbb{F}(x)=\ul{\ul{y}}\\
&\text{ where }\mathbb{F}=(\mathbb{F}^{(I)}_{mod},\mathbb{F}^{(I)}_{obs},\mathbb{F}^{(II)}_{mod},\mathbb{F}^{(II)}_{obs},\mathbb{F}^{(III)}_{mod},\mathbb{F}^{(III)}_{obs})\\
&\phantom{\text{ where }}
x=(\Mpp^{(I)},\Mpp^{(II)},M_z^{(II)},\Mpp^{(III)},M_z^{(III)},M^{eq},R_1,R_2^*)\\
&\phantom{\text{ where }}
\ul{\ul{y}}=(0,0,{\yp^{(I)}}_{\!\!1},\ldots,{\yp^{(I)}}_{\!\!N},0,0,0,0,{\yp^{(II)}}_{\!\!\!\!\!1},\ldots,{\yp^{(II)}}_{\!\!\!\!\!N},0,0,0,0,{\yp^{(III)}}_{\!\!\!\!\!\!\!\!1},\ldots,{\yp^{(III)}}_{\!\!\!\!\!\!\!\!N}\ )
\end{aligned}
\]
\Margin{R2 10)}
\begin{equation}\label{Faao}
\begin{aligned}
&\mathbb{F}^{(I)}_{mod}(x)
=\left(\begin{array}{l}
\frac{d}{dt}\Mpp^{(I)}-\nabla\cdot\bigl(D \nabla \Mpp^{(I)}\bigr) +\bigl(R_2^*+i\gamma\vec r \cdot \vec G_0\bigr)\Mpp^{(I)}
\quad\text{ in }(0,T]\\
\Mpp^{(I)}(0^+)+iM^{eq}
\end{array}\right)
\\
&\mathbb{F}^{(I)}_{obs,j}(x)
=\int_{\mathbb{R}^3} c^-_j(\vec r) \Mpp^{(I)}(t, \vec r) \,\textsf{d}\vec r, \quad j\in\{1,\ldots N\}, \quad t\in(0,T]
\end{aligned}
\end{equation}
\[
\begin{aligned}
&\mathbb{F}^{(J)}_{mod}(x)
=\left(\begin{array}{l}
\frac{d}{dt}M_z^{(J)}-\nabla\cdot\bigl(D\nabla M_z^{(J)}\bigr)+R_1 (M_z^{(J)}-M^{eq})
\quad\text{ in }(0,\tau^{(J)}]\\
M_z^{(J)}(0^+)+M^{eq}\\
\frac{d}{dt}\Mpp^{(J)}-\nabla\cdot\bigl(D \nabla \Mpp^{(J)}\bigr) +\bigl(R_2^*+i\gamma\vec r \cdot \vec G_0\bigr)\Mpp^{(J)}
\quad\text{ in }(\tau^{(J)},T]\\
\Mpp^{(J)}({\tau^{(J)}}^+)+iM_z^{(J)}({\tau^{(J)}}^-)
\end{array}\right)
\\
&\mathbb{F}^{(J)}_{obs,j}(x)
=\int_{\mathbb{R}^3} c^-_j(\vec r) \Mpp^{(J)}(t, \vec r) \,\textsf{d}\vec r, \quad j\in\{1,\ldots N\}, \quad t\in(\tau^{(J)},T]\\
&J\in \{II,\, III\}
\end{aligned}
\]
For the initial conditions at 
$0^+$ and $\tau^{(J)}$, 
we refer to \eqref{init90eps}, \eqref{init180tau90eps}, \eqref{init180tau90tauplus2eps}.

To prove linearized uniqueness at some reference point 
\[
x_{\text{ref}}=({\Mpp}_{\text{ref}}^{(I)},{\Mpp}_{\text{ref}}^{(II)},M_{z,\text{ref}}^{(II)},{\Mpp}_{\text{ref}}^{(III)},M_{z,\text{ref}}^{(III)},M^{eq}_{\text{ref}},R_{1\,\text{ref}},R_{2\,\text{ref}}^*),
\] 
we aim to draw the conclusion
\begin{equation}\label{Fprimex0impliesdx0}
\mathbb{F}'(x_{\text{ref}})\uld{x}=0
\quad \stackrel{!}{\Rightarrow}\quad 
\uld{x}=0.
\end{equation}

With the operators (cf. \eqref{mathcalA})
\begin{equation}\label{mathcalA12}
\begin{aligned}
&\mathcal{A}_z=
-\nabla\cdot\bigl(D \nabla \,.\,\bigr)+R_{1,\text{ref}}\\
&\mathcal{A}_\perp=
\left(\begin{array}{cc}
-\nabla\cdot\bigl(D \nabla \,.\,\bigr)+\Re(R_{2,\text{ref}}^*)&\Im(R_{2,\text{ref}}^*)+\gamma\vec r \cdot \vec G_0\\ 
-\Im(R_{2,\text{ref}}^*)-\gamma\vec r \cdot \vec G_0&-\nabla\cdot\bigl(D \nabla \,.\,\bigr)+\Re(R_2^*)
\end{array}\right)
\end{aligned}
\end{equation}
with eigenvalues $(\lambda_z^\ell)_{\ell\in\mathbb{N}}$, $(\lambda_\perp^\ell)_{\ell\in\mathbb{N}}$, and the notation
\[
\left(\begin{array}{cc}
\Re(\uld{R}_2^*)&\Im(\uld{R}_2^*)\\ 
-\Im(\uld{R}_2^*)&\Re(\uld{R}_2^*)
\end{array}\right)
\left(\begin{array}{c}
\Re({\Mpp}_{\text{ref}}^{(I)})\\
\Im({\Mpp}_{\text{ref}}^{(I)})
\end{array}\right)
=\uld{R}_2^*\,{\Mpp}_{\text{ref}}^{(I)},
\]
the model part of the 
\revision{premise}
\Margin{R1 7.} 
in \eqref{Fprimex0impliesdx0} can be written as 
\begin{equation}\label{FImodprime0}
\begin{aligned}
&\frac{d}{dt}{\uldMpp}^{(I)}+\mathcal{A}_\perp{\uldMpp}^{(I)} 
=-\uld{R}_2^*\,{\Mpp}_{\text{ref}}^{(I)}
\text{ in }(0,T]\\
&{\uldMpp}^{(I)}(0^+)=-i\uld{M}^{eq}
\end{aligned}
\end{equation}
\begin{equation}\label{FJmodprime0}
\begin{aligned}
&\frac{d}{dt}\uld{M}_z^{(J)}+\mathcal{A}_z\uld{M}_z^{(J)} 
=-\uld{R}_1\,(M_{z,\text{ref}}^{(J)}-M^{eq}_{\text{ref}})+R_{1\,\text{ref}}\,\uld{M}^{eq}
\text{ in }(0,\tau^{(J)}]\\
&\uld{M}_z^{(J)}(0^+)=-\uld{M}^{eq}
\\
&\frac{d}{dt}{\uldMpp}^{(J)}+\mathcal{A}_\perp{\uldMpp}^{(J)} 
=-\uld{R}_2^*\,{\Mpp}_{\text{ref}}^{(J)}
\text{ in }(\tau^{(J)},T]\\
&{\uldMpp}^{(J)}({\tau^{(J)}}^+)=-i\uld{M}_z^{(J)}({\tau^{(J)}}^-)\\
&J\in\{II,III\}.
\end{aligned}
\end{equation}

We choose the longitudinal reference states to be space-time separable
\begin{equation}\label{spacetime-sep}
\begin{aligned}
&M_{z,\text{ref}}^{(J)}(t, \vec r)-M^{eq}_{\text{ref}}(\vec r)=m_{z,\text{ref}}^{(J)}(t)\, \Phi_z(\vec r), \quad J\in\{II,III\}.
\end{aligned}
\end{equation} 
with 
$\Phi_z$ bounded away from zero and further conditions on 
$m_{z,\text{ref}}^{(J)}$ to be imposed below.

By means of the semigroup $(e^{-\mathcal{A}_z\,t})_{t>0}$ generated by $\mathcal{A}_z$ we can express $\uld{M}_z^{(J)}(\tau^{(J)})$ as 
\[
\begin{aligned}
&\uld{M}_z^{(J)}({\tau^{(J)}}^-)
=\mathbb{E}^{\tau^{(J)}}[\uld{M}^{eq}]-\widetilde{\mathbb{E}}^{\tau^{(J)}}[\uld{R}_1 \Phi_z]\\
&\text{with }\mathbb{E}^{\tau^{(J)}}=-e^{-\mathcal{A}_z\,\tau^{(J)}}+(1-e^{-\mathcal{A}_z\,\tau^{(J)}})\mathcal{A}_z^{-1}[R_{1,\text{ref}}\,.\,] \\
&\phantom{\text{with }}\widetilde{\mathbb{E}}^{\tau^{(J)}}
=\int_0^{\tau^{(J)}} m_{z,\text{ref}}^{(J)}(\sigma)\,e^{-\mathcal{A}_z\,(\tau^{(J)}-\sigma)}\, d\sigma.
\end{aligned}
\]
For the $\uldMpp$ equations, analyticity allows us to extend the time intervals in these abstract ODEs up to $+\infty$ and to apply Laplace transforms (starting at the left hand interval endpoint). Plugging the resulting solutions into the observations 
and multiplying the equations for $J\in\{II,III\}$ with $e^{s\,\tau^{(J)}}$ yields
\[
\begin{aligned}
&0=\int_{\mathbb{R}^3} c^-_j(\vec r) (s+\mathcal{A}_\perp)^{-1} \, w^{(J)}(s,\vec r) \,\textsf{d}\vec r
\\&\hspace*{3cm}
j\in\{1,\ldots N\}, \quad s\in\mathbb{C}\setminus\{-\lambda_\perp^m\,:\,m\in\mathbb{N}\}, \quad J\in\{I,II,III\}, \\
&\text{ where } \\
&w^{(I)}(s,\vec r)=-i\,\uld{M}^{eq}(\vec r)
-(\mathcal{L}_0{\Mpp}_{\text{ref}}^{(I)})(s, \vec r)\,\uld{R}_2^*(\vec r)\\
&w^{(J)}(s,\vec r)=
-i\bigl[\mathbb{E}^{\tau^{(J)}}[\uld{M}^{eq}]-\widetilde{\mathbb{E}}^{\tau^{(J)}}[\uld{R}_1 \Phi_z]\bigr](\vec r)
-e^{s\,\tau^{(J)}}(\mathcal{L}_{\tau^{(J)}}{\Mpp}_{\text{ref}}^{(J)})(s, \vec r)\,\uld{R}_2^*(\vec r)
\\& J\in\{II,III\}
 \end{aligned}
\]
Integrating over a circle $\Gamma_\ell(\rho)$ around $-\lambda_\perp^\ell$ and then shrinking its radius $\rho$ to zero we
obtain that 
\begin{equation}\label{orthoPell}
\langle \overline{c_j^-}, P_\ell w^{(J)}(-\lambda_\perp^\ell)\rangle_{L^2}=0, 
\quad j\in\{1,\ldots N\}, \quad \ell\in\mathbb{N}, \quad J\in\{I,II,III\}.
\end{equation}
Indeed, by Lebesgue's Dominated Convergence Theorem, we have 
\[
\lim_{\rho\to0} \int_{\Gamma_\ell(\rho)} c^-_j(\vec r) ((s+\mathcal{A}_\perp)^{-1} (\vec r) [w^{(J)}(s,\vec r)-w^{(J)}(-\lambda_\perp^\ell),\vec r)]\,\textsf{d}\vec r =0.
\] 
With the shorthand notation
\[
\mu_\ell^{(I)}(\vec r)=
(\mathcal{L}_0{\Mpp}_{\text{ref}}^{(I)})(-\lambda_\perp^\ell, \vec r)
\quad 
\mu_\ell^{(J)}(\vec r)=
(\mathcal{L}_{\tau^{(J)}}{\Mpp}_{\text{ref}}^{(J)})(-\lambda_\perp^\ell, \vec r)
\quad J\in\{II,III\},
\] 
the orthogonality relations \eqref{orthoPell}, that is, $P_\ell \overline{c_j^-}\perp w^{(J)}(-\lambda_\perp^\ell)$, read as
\begin{equation}\label{Pellcjperp}
\begin{aligned}
P_\ell \overline{c_j^-}\perp&-i\,\uld{M}^{eq}-\mu_\ell^{(I)}\,(\uld{R}_2^*
),\\
P_\ell \overline{c_j^-}\perp&-i\,\bigl[\mathbb{E}^{\tau^{(J)}}[\uld{M}^{eq}]-\widetilde{\mathbb{E}}^{\tau^{(J)}}[\uld{R}_1 \Phi_z]\bigr]
-e^{-\lambda_\perp^\ell\,\tau^{(J)}}\,\mu_\ell^{(J)}\,(\uld{R}_2^*
)\quad J\in\{II,III\}
\end{aligned}
\end{equation}
 
In order to allow for some elimination and achieve independence of $\ell$ in the right hand side, we choose 
\begin{equation}\label{assmperp}
{\Mpp}_{\text{ref}}^{(J)}(t,\vec r):=\bar{\mu}^{(J)} {\Mpp}_{\text{ref}}^{(I)}(t-\tau^{(J)},\vec r)
\quad t\geq\tau^{(J)}, \quad J\in\{II,III\}
\end{equation}
for some 
constants 
$\bar{\mu}^{(II)}$, $\bar{\mu}^{(III)}$, 
so that 
\[
\begin{aligned}
e^{-\lambda_\perp^\ell\,\tau^{(J)}}\,\mu_\ell^{(J)}(\vec r)
&
=\bar{\mu}^{(J)}\,e^{-\lambda_\perp^\ell\,\tau^{(J)}}\int_{\tau^{(J)}}^\infty {\Mpp}_{\text{ref}}^{(I)}(t-\tau^{(J)},\vec r) e^{\lambda_\perp^\ell t}\, dt
\\&=
\bar{\mu}^{(J)}\, \mu_\ell^{(I)}(\vec r), \quad J\in\{II,III\}.
\end{aligned}
\] 
Note that our all-at-once formulation provides the freedom to make these choices of the reference states; in Remark~\ref{rem:refstates} we will show that \eqref{assmperp} is satisfied for ${\Mpp}_{\text{ref}}^{(J)}={\Mpp}_{180^\circ-\tau^{(J)}-90^\circ}$ as defined in \eqref{Mperp_ex_180-tau-90_rect_B}.
\\
Elimination of 
$\uld{R}_2^*$ 
thus yields 
\begin{equation}\label{Pellcj-w}
\begin{aligned}
&P_\ell \overline{c_j^-}\perp \ \mathbb{E}^{\tau^{(J)}}[\uld{M}^{eq}]-\widetilde{\mathbb{E}}^{\tau^{(J)}}[\uld{R}_1 \Phi_z]
-\bar{\mu}^{(J)}\uld{M}^{eq}\\ 
&J\in\{II,III\}\quad \ell\in\mathbb{N}
 \end{aligned}
\end{equation}
Therewith, analogously to above, \eqref{Pellcj-w} implies that 
with the operator defined by 
\begin{equation}\label{mathbfE}
\mathbf{E}=
\left(\begin{array}{cc}
\mathbb{E}^{\tau^{(II)}}\!\!\!-\bar{\mu}^{(II)} & \widetilde{\mathbb{E}}^{\tau^{(II)}}\\
\mathbb{E}^{\tau^{(III)}}\!\!\!-\bar{\mu}^{(III)} & \widetilde{\mathbb{E}}^{\tau^{(III)}}
\end{array}\right)
\end{equation}
both components of $\mathbf{E}(\uld{M}^{eq},\uld{R}_1 \Phi_z)$ must vanish, provided they are contained in $\Xpar$ satisfying \eqref{assXpar_uniquenessD}.

In terms of the eigensystem $(\lambda_z^\ell,(\varphi^\ell_k)_{k\in\{1,\ldots,d_\ell\}})_{\ell\in\mathbb{N}}$ of $\mathcal{A}_z$, and the generalized Fourier coefficients
\[
a^\ell_k:=\langle \uld{M}^{eq},\varphi^\ell_k\rangle, \quad
b^\ell_k:=\langle \uld{R}_1\Phi_z,\varphi^\ell_k\rangle, 
\]
with $R_{1,\text{ref}}(\vec r)\equiv R_{1,0}$, the equation $-\mathbf{E}(\uld{M}^{eq},\uld{R}_1 \Phi_z)=0$ can be written as a sequence of $2\times2$ systems
\begin{equation}\label{sysab}
A_\ell^{(J)} \, a^\ell_k 
+ B_\ell^{(J)} \, b^\ell_k=0 \quad J\in\{II,III\} \quad \ell\in\mathbb{N}
\end{equation}
with 
\begin{equation}\label{AellJBellJ}
\begin{aligned}
&A_\ell^{(J)}=e^{-\lambda_z^\ell\,\tau^{(J)}}
-\frac{R_{1,0}}{\lambda_z^\ell}(1-e^{-\lambda_z^\ell\,\tau^{(J)}})+\bar{\mu}^{(J)}\\
&B_\ell^{(J)}=\int_0^{\tau^{(J)}} m_{z,\text{ref}}^{(J)}(\sigma)\,e^{-\lambda_z^\ell\,(\tau^{(J)}-\sigma)}\, d\sigma
\end{aligned}
\end{equation}
We will require ${\Mpp}_{\text{ref}}^{(J)}$, ${M_z}_{\text{ref}}^{(J)}$ to be chosen such that the determinants of each of these systems are nonzero
\begin{equation}\label{det_ell}
A_\ell^{(II)}\, B_\ell^{(III)}-A_\ell^{(III)}\, B_\ell^{(II)}\not=0, \quad \ell\in\mathbb{N}
\end{equation}
and in Remark~\ref{rem:refstates} will show that in fact it this is satisfied with the canonical choice of the reference states by the explicit solution formulas from Subsection~\ref{sec:explsol}.

This implies $a^\ell_k=b^\ell_k=0$, for all $\ell\in\mathbb{N}$, $k\in\{1,\ldots,d_\ell\}$, hence $\uld{M}^{eq}=0$, $\uld{R}_1\Phi_z=0$.
Inserting this into the first equation of \eqref{Pellcjperp} and assuming
\begin{equation}\label{muIell}
\mu_\ell^{(I)}(\vec r)\not=0\text{ for a.e. }{\vec r}\in\Omega
\end{equation}
implies that  
$\uld{R}_2^*$ 
vanishes, provided it is contained in $\Xpar$. 
Thus, the initial and right hand side data in \eqref{FImodprime0}, \eqref{FJmodprime0} vanish and so, as a consequence, do the states ${\uldMpp}^{(J)}$, $\uld{M}_z^{(J)}$.

With this and the time shifted solution spaces defined by 
\[
\begin{aligned}
&\mathbb{V}_{\perp\,\tau}=L^\infty(\tau,T;L^2(\Omega;\mathbb{C}))\cap L^2(\tau,T;H_D^1(\Omega;\mathbb{C}))\\
&\mathbb{V}_{z\,\tau}=L^\infty(\tau,T;L^2(\Omega;\mathbb{R}))\cap L^2(\tau,T;H_D^1(\Omega;\mathbb{R})),
\end{aligned}
\]
we have proven
\begin{theorem}[\revisionown{linearized uniqueness of $(M^{eq},R_1,R_2,\delta B^0)$ in Bloch-Torrey PDE}]\label{thm:uniqueness_diff}
Let the assumptions of Theorem~\ref{thm:uniqueness_diff_Meq} hold.
Moreover, assume that \eqref{spacetime-sep}, \eqref{assmperp}, 
\eqref{AellJBellJ}, \eqref{det_ell}, 
\eqref{muIell} hold with $R_{1,\text{ref}}(\vec r)\equiv R_{1,0}$.

Then $\mathbb{F}$ as defined by \eqref{Faao} has an injective linearization $\mathbb{F}'(x_{\text{ref}})$ on 
\[
\widetilde{X}=
\mathbb{V}_{\perp\,0}\times \mathbb{V}_{\perp\,\tau^{(II)}}\times\mathbb{V}_{z\,0}\times \mathbb{V}_{\perp\,\tau^{(III)}}\times\mathbb{V}_{z\,0}
\times \mathbf{E}^{-1}(\Xpar\times\Xpar)\times 
\Xpar
\]
with $\mathbf{E}$ as in \eqref{mathbfE}.
\end{theorem}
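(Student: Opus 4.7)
The plan is to establish \eqref{Fprimex0impliesdx0} by systematically disentangling the contributions of $\uld{M}^{eq}$, $\uld{R}_1$, and $\uld{R}_2^*$ from the orthogonality conditions produced by the observation equations, using the spectral structure of $\mathcal{A}_\perp$ and $\mathcal{A}_z$. First I would write out $\mathbb{F}'(x_{\text{ref}})\uld{x}=0$ explicitly as the coupled initial-value problems \eqref{FImodprime0}--\eqref{FJmodprime0} supplemented by the observations $\int_\Omega c_j^-(\vec r)\,{\uldMpp}^{(J)}(t,\vec r)\,d\vec r=0$ on the respective intervals. Since the right-hand sides and coefficients are time-analytic, I extend the validity of these identities to $(0,\infty)$ and apply shifted Laplace transforms, converting each equation into an algebraic relation of the form $(s+\mathcal{A}_\perp)(\mathcal{L}_\bullet{\uldMpp}^{(J)})(s,\cdot)=w^{(J)}(s,\cdot)$, and then plug this into the Laplace-transformed observations to obtain $\langle\overline{c_j^-},(s+\mathcal{A}_\perp)^{-1}w^{(J)}(s,\cdot)\rangle_{L^2}=0$ for all $s$ outside the spectrum.

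Next I would reproduce the eigenprojection device from the proof of Theorem~\ref{thm:uniqueness_diff_Meq}: integrate the above identity around a small circle $\Gamma_\ell$ enclosing the eigenvalue $-\lambda_\perp^\ell$ and shrink $\Gamma_\ell$ to $\{-\lambda_\perp^\ell\}$ via dominated convergence, to obtain the family of orthogonality relations \eqref{Pellcjperp}. Here the key structural observation is that the separability hypothesis \eqref{assmperp} makes the $J\in\{II,III\}$ data Laplace-transform proportional to the $J=I$ data, so that $e^{-\lambda_\perp^\ell\tau^{(J)}}\mu_\ell^{(J)}=\bar\mu^{(J)}\mu_\ell^{(I)}$. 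Subtracting the appropriate multiple of the $J=I$ relation from the $J\in\{II,III\}$ relations eliminates $\uld{R}_2^*$ and yields \eqref{Pellcj-w}, an orthogonality relation for $\mathbf{E}(\uld{M}^{eq},\uld{R}_1\Phi_z)$ alone against every $P_\ell\overline{c_j^-}$.

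Then I would invoke the span condition \eqref{assXpar_uniquenessD} on $\Xpar$ to deduce $\mathbf{E}(\uld{M}^{eq},\uld{R}_1\Phi_z)=0$; by construction $(\uld{M}^{eq},\uld{R}_1\Phi_z)\in\mathbf{E}^{-1}(\Xpar\times\Xpar)\subset \widetilde X$ so this is licit. Expanding in the $(\varphi_k^\ell)$-basis reduces the equation to the $2\times 2$ systems \eqref{sysab} with coefficients \eqref{AellJBellJ}, whose determinants are nonzero by \eqref{det_ell}; hence $a_k^\ell=b_k^\ell=0$ for all $\ell,k$, so $\uld{M}^{eq}=0$ and, since $\Phi_z$ is bounded away from zero by \eqref{spacetime-sep}, also $\uld{R}_1=0$. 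Feeding this back into \eqref{Pellcjperp} for $J=I$ produces $P_\ell\overline{c_j^-}\perp\mu_\ell^{(I)}\uld{R}_2^*$ for every $\ell,j$; the nonvanishing hypothesis \eqref{muIell} together with \eqref{assXpar_uniquenessD} then force $\uld{R}_2^*=0$ in $\Xpar$.

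With all three parameter increments zero, the forcing terms and initial data in \eqref{FImodprime0}--\eqref{FJmodprime0} vanish identically, so an appeal to Proposition~\ref{prop:wellposed} gives $\uldMpp^{(J)}=0$ in $\mathbb{V}_{\perp\,\tau^{(J)}}$ and $\uld{M}_z^{(J)}=0$ in $\mathbb{V}_{z\,0}$, which completes the argument. The main obstacle I anticipate is the last elimination step: the relation $P_\ell\overline{c_j^-}\perp \mu_\ell^{(I)}\uld{R}_2^*$ gives information about the product $\mu_\ell^{(I)}\uld{R}_2^*$ rather than about $\uld{R}_2^*$ itself, so to conclude $\uld{R}_2^*=0$ from $\uld{R}_2^*\in\Xpar$ one has to use \eqref{muIell} pointwise a.e.\ together with the way \eqref{assXpar_uniquenessD} tests against \emph{all} eigenindices $\ell$, ensuring that the multiplicative weights $\mu_\ell^{(I)}$ cannot conspire to annihilate a nontrivial $\uld{R}_2^*\in\Xpar$; verifying that the canonical reference states from Section~\ref{sec:explsol} actually satisfy \eqref{assmperp}, \eqref{det_ell} and \eqref{muIell} (deferred to Remark~\ref{rem:refstates}) is the secondary technical point.
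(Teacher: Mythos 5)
Your proposal is correct and follows essentially the same route as the paper's own argument: Laplace transform in time, residue/eigenprojection around each $-\lambda_\perp^\ell$ to obtain \eqref{Pellcjperp}, elimination of $\uld{R}_2^*$ via \eqref{assmperp} leading to \eqref{Pellcj-w}, the condition \eqref{assXpar_uniquenessD} forcing $\mathbf{E}(\uld{M}^{eq},\uld{R}_1\Phi_z)=0$, the $2\times2$ systems \eqref{sysab} with \eqref{det_ell} giving $\uld{M}^{eq}=\uld{R}_1=0$, then \eqref{muIell} giving $\uld{R}_2^*=0$ and finally vanishing of the states by well-posedness. The subtlety you flag about the weights $\mu_\ell^{(I)}$ multiplying $\uld{R}_2^*$ is present in the paper's proof in exactly the same form, so your treatment matches it.
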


\begin{remark}\label{rem:refstates} 
We now verify the assumptions \eqref{spacetime-sep}, \eqref{assmperp}, 
\eqref{AellJBellJ}, \eqref{det_ell}, \eqref{muIell}
of Theorem~\ref{thm:uniqueness_diff} for the evident choice of the reference states provided by the (approximately) explicit formulas in the Bloch ODE case with $R_{1\,\text{ref}}(\vec r)\equiv\tilde{R}_{1,0}$, $R_{2\,\text{ref}}^*(\vec r)\equiv\tilde{R}_{2,0}^*$, cf. \eqref{Mperp_ex_90_rect_B}, \eqref{init180tau90eps}, \eqref{Mperp_ex_180-tau-90_rect_B}. 
Indeed, \eqref{spacetime-sep} with $m_{z,\text{ref}}^{(J)}(t)=-2e^{-\tilde{R}_{1,0}t}$, $\Phi_z(\vec r)=M^{eq}_{\text{ref}}(\vec r)$ is immediate from \eqref{init180tau90eps} 
and a simple computation yields
\[
B_\ell^{(J)}=
\int_0^{\tau^{(J)}} m_{z,\text{ref}}^{(J)}(\sigma)\,e^{-\lambda_z^\ell\,(\tau^{(J)}-\sigma)}\, d\sigma
= - \frac{2}{\lambda_z^\ell-\tilde{R}_{1,0}}\, 
\Bigl(e^{-\tilde{R}_{1,0}\,\tau^{(J)}}-e^{-\lambda_z^\ell\,\tau^{(J)}}\Bigr).
\]
Also \eqref{assmperp} is naturally satisfied with 
\[
\bar{\mu}^{(J)}=1-2e^{-\tilde{R}_{1,0}\tau^{(J)}}, 
\]
cf., \eqref{Mperp_ex_90_rect_B}, \eqref{Mperp_ex_180-tau-90_rect_B} with ${\vec G}(t)\equiv{\vec G}_0$, hence 
${\vec k}(t)={\vec G}_0t$ in \eqref{Mperp_ex_90_rect_B} 
${\vec k}(t)={\vec G}_0(t-\tau^{(J)})$ in \eqref{Mperp_ex_180-tau-90_rect_B}.
To verify \eqref{AellJBellJ}, \eqref{det_ell}, we observe that 
\skipsimple{
\[
\begin{aligned}
A_\ell^{(J)}
&=\frac{\lambda_z^\ell-R_{1,0}}{\lambda_z^\ell}
+\frac{\lambda_z^\ell+R_{1,0}}{\lambda_z^\ell} e^{-\lambda_z^\ell\,\tau^{(J)}} 
-2 e^{-\tilde{R}_{1,0}\,\tau^{(J)}}
\\
&=(\lambda_z^\ell-\tilde{R}_{1,0})B_\ell^{(J)}
+\frac{\lambda_z^\ell-R_{1,0}}{\lambda_z^\ell}
\bigl(1-e^{-\lambda_z^\ell\,\tau^{(J)}}\bigr)
\\[1ex]
B_\ell^{(J)}
&=
\frac{2}{\lambda_z^\ell-\tilde{R}_{1,0}}
\Bigl(\bigl(1-e^{-\tilde{R}_{1,0}\,\tau^{(J)}}\bigr)
-\bigl(1-e^{-\lambda_z^\ell\,\tau^{(J)}}\bigr)\Bigr)
\end{aligned}
\]
hence by linearity of the determinant
}
\[
\begin{aligned}
&A_\ell^{(II)}\, B_\ell^{(III)}-A_\ell^{(III)}\, B_\ell^{(II)}\\
&\skipsimple{
=\frac{\lambda_z^\ell-R_{1,0}}{\lambda_z^\ell}
\Bigl(
\bigl(1-e^{-\lambda_z^\ell\,\tau^{(II)}}\bigr)\, B_\ell^{(III)}-\bigl(1-e^{-\lambda_z^\ell\,\tau^{(III)}}\bigr)\, B_\ell^{(II)}
\Bigr)
}
=\frac{2(\lambda_z^\ell-R_{1,0})}{\lambda_z^\ell(\lambda_z^\ell-\tilde{R}_{1,0})}
\Bigl(
\bigl(1-e^{-\lambda_z^\ell\,\tau^{(II)}}\bigr)\, \bigl(1-e^{-\tilde{R}_{1,0}\,\tau^{(III)}}\bigr)
-\bigl(1-e^{-\lambda_z^\ell\,\tau^{(III)}}\bigr)\, \bigl(1-e^{-\tilde{R}_{1,0}\,\tau^{(II)}}\bigr)
\Bigr)\\
&\not=0, \quad \ell\in\mathbb{N}.
\end{aligned}
\]

Also \eqref{muIell} can be easily verified with a choice of $M^{eq}_{\text{ref}}(\vec r)\not=0$ almost everywhere, by using cf. \eqref{Mperp_ex_90_rect_B}
\[
\begin{aligned}
\mu_\ell^{(I)}(\vec r)
=\int_0^\infty{\Mpp}_{\text{ref}}^{(I)}(t,\vec r)e^{\lambda_\perp^\ell\,t}\,dt
=-i\int_0^\infty e^{-(\tilde{R}_{2,0}^*+2\pi i {\vec G}_0\cdot{\vec r}+\lambda_\perp^\ell)\,t)}\,dt
\,M^{eq}(\vec r).
\end{aligned}
\]

\end{remark}

\appendix
\section{}
\subsection{Relaxation time $R_1$ and $R_2^*$ reconstruction formulas from elementary $90{}^{\circ}$ pulse and $180{}^{\circ}$ pulse -- $\tau$ intermission -- $90{}^{\circ}$ pulse data}
\label{sec:appendix_reconR1R2} 
\subsubsection*{$R_2^*(\vec r)$ recovery from $90{}^{\circ}$ pulse data via differentiaton with respect to time}
Differentiating the identity cf. \eqref{MR-Fourier_rect_B} 
\[
\tilde{y}_j(t):=i\,e^{i\omega_0t} \, y_j(t) 
\dot{=} \int_{\mathbb{R}^3} e^{-R_2^*(\vec r)(t-\taup)} (c^-_j\,M^{eq})(\vec r) \, e^{-2\pi i {\vec k}(t)\cdot{\vec r}}  \,\textsf{d}\vec r
,\qquad j=1,\ldots N,
\]
with respect to time and then multiplying with $e^{R_{2,0}^*(t-\taup)}$ yields
\begin{equation}\label{pre-reconR2_rect}
\begin{aligned}
&e^{R_{2,0}^*(t-\taup)}\, \frac{d}{dt}\tilde{y}_j(t)
\dot{=} -\int_{\mathbb{R}^3} (R_2^*(\vec r)+i\gamma {\vec G}(t)\cdot{\vec r})\,e^{-\delta R_2^*(\vec r)(t-\taup)} (c^-_j\,M^{eq})(\vec r) \, e^{-2\pi i {\vec k}(t)\cdot{\vec r}}  \,\textsf{d}\vec r\\
&\approx
 -\int_{\mathbb{R}^3} (R_2^*(\vec r)+i\gamma {\vec G}(t)\cdot{\vec r}) (c^-_j\,M^{eq})(\vec r) \, e^{-2\pi i {\vec k}(t)\cdot{\vec r}}  \,\textsf{d}\vec r\\
&=-\mathcal{F}[R_2^*c^-_j\,M^{eq}]({\vec k}(t))
+\frac{\gamma}{2\pi} {\vec G}(t)\cdot\nabla_\xi \mathcal{F}[c^-_j\,M^{eq}]({\vec k}(t))
,\qquad j=1,\ldots N,
\end{aligned}
\end{equation}
where again $\approx$ is an approximation for vanishing $\delta R_2^*$, cf. \eqref{R2effdecomp},
thus
\begin{equation}\label{reconR2_rect}
\mathcal{F}[R_2^*c^-_j\,M^{eq}]({\vec k}(t))
\approx \frac{\gamma}{2\pi} {\vec G}(t)\cdot\nabla_\xi \mathcal{F}[c^-_j\,M^{eq}]({\vec k}(t))
-i\,e^{R_{2,0}^*(t-\taup)}\, \frac{d}{dt}\Bigl(e^{i\omega_0t} \, y_j(t)\Bigr)
\end{equation}
The estimate
\[
|1-e^{-\delta R_2^*(\vec r)(t-\taup)}|=
|e^{-\theta}\,\delta R_2^*(\vec r)(t-\taup)|<<|\delta R_2^*(\vec r)|
\]
for some intermediate value $\theta\in[0,\delta R_2^*(\vec r)(t-\taup)]$, justifies neglecting $\delta R_2^*(\vec r)$ in the exponential term of \eqref{pre-reconR2_rect}, while keeping it in the linear term. 

One can use formula \eqref{reconR2_rect} by first of all determining $R_{2,0}^*$ as the unique solution to the scalar nonlinear regression problem 
\[
a_j(t)\, R_{2,0}^* + b_j(t) e^{R_{2,0}^*(t-\taup)}= \frac{\gamma}{2\pi} {\vec G}(t)\cdot\nabla_\xi \mathcal{F}[c^-_j\,M^{eq}]({\vec k}(t))\qquad j=1,\ldots N, \ t>\taup
\] 
with 
\[
a_j(t)=\mathcal{F}[c^-_j\,M^{eq}]({\vec k}(t))\,dt, \quad
b_j(t)= i \, \frac{d}{dt}\Bigl(e^{i\omega_0t} \, y_j(t)\Bigr)
\]
and then using this constant $R_{2,0}^*\in\mathbb{C}$ for reconstructing the spatially variable part from \eqref{reconR2_rect}.

Differentiation with respect to time and (k-)space in \eqref{reconR2_rect} makes this reconstruction formula mildly ill-posed. We thus follow an approach avoiding differentiation in Section~\ref{sec:uniqueness_perturbation}.

\subsubsection*{$R_2^*(\vec r)$ recovery from $180{}^{\circ}$ pulse -- $\tau$ intermission -- $90{}^{\circ}$ pulse data}

From \eqref{obsPhi1_rect_B}, $R_1(\vec r)\in\mathbb{R}$ can be recovered by either using the already reconstructed $M^{eq}(\vec r)$ 
\begin{equation}\label{reconR1_Meq_rect}
R_1(\vec r)=\frac{1}{\tau}\left(\ln 2 -\ln\left(1-\left|\frac{\Phi_\tau(\vec r)}{M^{eq}(\vec r)}\right|\right)\right)
\end{equation}
or two different intermission times $\tau\in\{\tau_1,\tau_2\}$
\begin{equation}\label{reconR1_twotaus_rect}
\left|\frac{\Phi^{(1)}_{\tau_1}(\vec r)}{\Phi^{(1)}_{\tau_2}(\vec r)}\right|
=\psi(-R_1(\vec r))
\text{ with }\psi(x)=\frac{1-2e^{\tau_1 x}}{1-2e^{\tau_2 x}}
\end{equation}
where by elementary computations 
\begin{equation}\label{psiprime}
\begin{aligned}
\psi'(x)&=2(1-2e^{\tau_2 x})^{-2} e^{\tau_1 x}\, \tilde{\psi}(x)\text{ with }
\tilde{\psi}(x)=(\tau_2-\tau_1) (1-2e^{\tau_2 x})+\tau_2(e^{(\tau_2-\tau_1) x}-1)\\
\tilde{\psi}'(x)&=\tau_2(\tau_2-\tau_1)e^{(\tau_2-\tau_1) x}(1-2e^{\tau_2 x})\text{ hence }
\tilde{\psi}'(\tilde{x})=0 \quad \Leftrightarrow \tilde{x}=-\frac{\ln2}{\tau_1}<0\\
\tilde{\psi}(\tilde{x})&=\tau_1(e^{(\tau_2-\tau_1) \tilde{x}}-1)<0
, \quad \tilde{\psi}(0)=-(\tau_2-\tau_1), \quad \lim_{x\to-\infty}\tilde{\psi}(x)= -\tau_1\\
\tilde{\psi}(x)&\leq\max\{\tilde{\psi}(\tilde{x}),\,\tilde{\psi}(0),\,\lim_{x\to-\infty}\tilde{\psi}(x)\}<0
\end{aligned}
\end{equation}
and $2(1-2e^{\tau_2 x})^{-2} e^{\tau_1 x}\geq 2e^{\tau_1 x}\geq 2e^{-\tau_1 \overline{R}_1}>0$ for $x\in[-\overline{R}_1,0]$, 
the derivative $\psi'$ can be bounded away from zero,
which implies bijectivity of $\psi$ and thus unique (and stable) reconstructability of $R_1(\vec r)$ from \eqref{reconR1_Meq_rect}.

\medskip


\subsection{Proofs of Lemma ~\ref{lem_rect} and Proposition~\ref{prop:perturb_expl-sol_lin}} 
\subsubsection*{Proof of Lemma~\ref{lem_rect}}
The eigenvalues and -vectors of the matrix $A$ governing the system \eqref{3by3ODEsys} of first order ODEs are
\[
\begin{aligned}
&\lambda_0=\alpha_2, \quad 
&&\lambda_\pm = \frac{\alpha_1+\alpha_2}{2}\pm i\sqrt{|\bb|^2-\left(\frac{\alpha_2-\alpha_1}{2}\right)^2}, \\
&v_0=\left(\begin{array}{c}\bb_x\\ \bb_y\\0\end{array}\right), \quad 
&&v_\pm=\left(\begin{array}{c}
\left(\frac{\alpha_2-\alpha_1}{2}\pm i\sqrt{|\bb|^2-\left(\frac{\alpha_2-\alpha_1}{2}\right)^2}\right)\bb_y\\ 
-\left(\frac{\alpha_2-\alpha_1}{2}\pm i\sqrt{|\bb|^2-\left(\frac{\alpha_2-\alpha_1}{2}\right)^2}\right)\bb_x\\ 
|\bb|^2\end{array}\right)
\end{aligned}
\]
Thus, with the matrices 
\[
\begin{aligned}
&\mathbb{T}=\left(\begin{array}{ccc}
\frac{\bb_x}{|\bb|}&
(d+i\sqrt{1-d^2})\, \frac{\bb_y}{|\bb|}&
(d-i\sqrt{1-d^2})\, \frac{\bb_y}{|\bb|}
\\ 
\frac{B^1_y}{|B^1|}&
-(d+i\sqrt{1-d^2})\, \frac{\bb_x}{|\bb|}&
-(d-i\sqrt{1-d^2})\, \frac{\bb_x}{|\bb|}
\\ 
0&1&1
\end{array}\right), \\
&\mathbb{T}^{-1}=\left(\begin{array}{ccc}
\frac{\bb_x}{|\bb|}&\frac{\bb_y}{|\bb|}&0\\
\frac{1}{2i\sqrt{1-d^2}}\frac{\bb_y}{|\bb|}&-\frac{1}{2i\sqrt{1-d^2}}\frac{\bb_x}{|\bb|}&
\frac12-\frac{d}{2i\sqrt{1-d^2}}
\\
-\frac{1}{2i\sqrt{1-d^2}}\frac{\bb_y}{|\bb|}&\frac{1}{2i\sqrt{1-d^2}}\frac{\bb_x}{|\bb|}&
\frac12+\frac{d}{2i\sqrt{1-d^2}}
\end{array}\right)
\end{aligned}
\]
with $d=\frac{\alpha_2-\alpha_1}{2|\bb|}$, we get $\mathbb{T}^{-1}A\mathbb{T}=\text{diag}_\lambda:=\text{diag}(\lambda_0,\lambda_+,\lambda_-)$ and can compute the solution as 
\begin{equation}\label{mtexplicit0}
\begin{aligned}
&m(t)=\tilde{f}+\mathbb{T} e^{-(t-t_0)\text{diag}_\lambda}\mathbb{T}^{-1}\left(m(t_0)-\tilde{f}\right)
\\
&\text{where }\tilde{f}=\mathbb{T}\text{diag}_\lambda^{-1}\mathbb{T}^{-1}\left(\begin{array}{c}0\\0\\f\end{array}\right)=
\frac{f}{|\bb|^2+\alpha_1\alpha_2}\left(\begin{array}{c}\bb_y\\-\bb_x\\ \alpha_2\end{array}\right)
\end{aligned}
\end{equation}
Without loss of generality we take $\bb_x=|\bb|$, $\bb_y=0$ to obtain
\begin{equation}\label{TTex}
\begin{aligned}
&\mathbb{T}=
\mathbb{T}_{ex}+O(d), \qquad 
\mathbb{T}^{-1}=\mathbb{T}^{-1}_{ex}+O(d)\\
&\text{with }
\mathbb{T}_{ex}
=\left(\begin{array}{ccc}
1&0&0\\ 
0&-i&i\\ 
0&1&1
\end{array}\right), \qquad
\mathbb{T}^{-1}_{ex}=\left(\begin{array}{ccc}
1&0&0\\
0&\frac{i}{2}&\frac12
\\
0&-\frac{i}{2}&\frac12
\end{array}\right)\\
&\lambda_0=\alpha_2, \quad \lambda_\pm=|\bb|(\pm i+O(|\bb|^{-1})),
\end{aligned}
\end{equation}
where $O(d)=O(|\bb|^{-1})$ and moreover, $\tilde{f}=O(|\bb|^{-1})f$.
Therefore 
\begin{equation}\label{mtexplicit}
\begin{aligned}
&m(t)=m_{ex}(t)\,+O(|\bb|^{-1})m(t_0)\, +O(|\bb|^{-1})f\\
&\text{with }m_{ex}(t)=\left(\begin{array}{ccc}
e^{-\alpha_2(t-t_0)}&0&0\\ 
0&\cos(|\bb|(t-t_0))&-\sin(|\bb|(t-t_0))\\ 
0&\sin(|\bb|(t-t_0))&\cos(|\bb|(t-t_0))
\end{array}\right)m(t_0)
\end{aligned}
\end{equation}

\subsubsection*{Proof of Proposition~\ref{prop:perturb_expl-sol_lin}}
Analogously to \eqref{mtexplicit0}, \eqref{mtexplicit},
for the solution $m$ to the the more general ODE system
\begin{equation*}
\frac{d}{dt}\left(\begin{array}{c}m_x(t)\\m_y(t)\\m_z(t)\end{array}\right)
+
\left(\begin{array}{ccc}
\alpha_2&0&0\\ 
0&\alpha_2&|\bb|\\
0&-|\bb|&\alpha_1\end{array}\right)
\left(\begin{array}{c}m_x(t)\\m_y(t)\\m_z(t)\end{array}\right)
={\vec f}(t)\quad t\in[t_0,t_0+\taup]
\end{equation*}
in place of \eqref{3by3ODEsys} we have 
\begin{equation}\label{mtexplicit_gen}
\begin{aligned}
&m(t)=\mathbb{T}\Bigl(e^{-(t-t_0)\text{diag}_\lambda}\mathbb{T}^{-1}m(t_0)
+\int_{t_0}^te^{-(t-s)\text{diag}_\lambda}\mathbb{T}^{-1}{\vec f}(s)\, ds\Bigr)\\
&=m_{ex}(t)\,+O(|\bb|^{-1})|m(t_0)|\, +O(|\bb|^{-1})|{\vec f}|\\
&\text{with }m_{ex}(t)=\mathbb{T}_{ex}\Bigl(e^{-(t-t_0)\text{diag}_\lambda}\mathbb{T}^{-1}_{ex}m(t_0)
+\int_{t_0}^te^{-(t-s)\text{diag}_{\lambda,ex}}\mathbb{T}^{-1}_{ex}{\vec f}(s)\, ds\Bigr)
\end{aligned}
\end{equation}
with $\mathbb{T}$, $\mathbb{T}_{ex}$
as in \eqref{TTex} and $\lambda_{0,ex}=\alpha_2$, $\lambda_{\pm,ex}=\pm i|\bb|$.
This allows us to assess the error in the linearization of \eqref{B_rect} with respect to $(M^{eq},R_1,R_2^*)$, which is given by 
\begin{equation}\label{B_rect_lin}
\begin{aligned}
&\frac{d}{dt}\uldMpp(t, \vec r) 
+ R_2^*(\vec r) \uldMpp(t, \vec r)-i\gamma \taup^{-1}\,  \uld{M}_z(t, \vec r)c^+(\vec r) \\
&\qquad\qquad=-\uld{R}_2^*(\vec r) \Mpp(t, \vec r)\\    
&\frac{d}{dt}\uld{M}_z(t, \vec r)  
+R_1(\vec r) \uld{M}_z(t, \vec r)+\gamma \taup^{-1}\, \Re\bigl(i\uldMpp(t, \vec r)c^+(\vec r)\bigr)\\
&\qquad\qquad=-\uld{R}_1(\vec r) (M_z(t, \vec r)-M^{eq}(\vec r))+R_1(\vec r)\uld{M}^{eq}(\vec r))\\
&\vec r\in\Omega, \quad t\in[t_0,t_0+\taup]
\end{aligned}
\end{equation}
The estimate therewith relies on \eqref{mtexplicit_gen} and a propagation of this error through \eqref{B_uncoupled}.

\subsection{Derivation of \eqref{FIprimeD0}, \eqref{FIIprimeD0}}
During the $90^\circ$ and $180^\circ$ pulses, the inhomogeneous right hand sides lead to $O(\taup)$ terms and so as in \eqref{init90eps}, \eqref{init180tau90eps}, \eqref{init180tau90tauplus2eps} we obtain (skipping the subscript ``ref'' in $\Mphp$ and $M_z$)
\begin{itemize}
\item with $\pp={\pp}_{90^\circ}$:
\begin{equation*}
\hspace*{-1cm}\begin{aligned}
&\Mpp(\taup,\cdot)\dot{=}-iM_z(0,\cdot)=-iM^{eq}
&&\uldMpp(\taup,\cdot)\dot{=}-iM_z(0,\cdot)=-i\uld{M}^{eq}\\
&M_z(\taup,\cdot)\dot{=}\Im(\Mpp(0,\cdot))=0
&&\uld{M}_z(\taup,\cdot)\dot{=}\Im(\uldMpp(0,\cdot))=0
\end{aligned}
\end{equation*}
\item with $\pp={\pp}_{180^\circ-\tau-90^\circ}$:
\begin{equation*}
\hspace*{-1cm}\begin{aligned}
&\Mpp(\taup,\cdot)\dot{=}-\Mpp(0,\cdot)=0
&&\uldMpp(\taup,\cdot)\dot{=}-\uldMpp(0,\cdot)=0\\
&M_z(\taup,\cdot)\dot{=}-M_z(0,\cdot)=-M^{eq}
&&\uld{M}_z(\taup,\cdot)\dot{=}-\uld{M}_z(0,\cdot)=-\uld{M}^{eq}\\
&\Mpp(\taup+\tau,\cdot)\dot{=}0
&&\uldMpp(\taup+\tau,\cdot)\dot{=}0\\
&M_z(\taup+\tau,\cdot)\dot{=}\mathbb{L}_\tau M^{eq}
&&\uld{M}_z(\taup+\tau,\cdot)\dot{=}\mathbb{L}_\tau \uld{M}^{eq}\\
&&&
-\int_\taup^{\tau+\taup}e^{-\mathcal{A}_1(\tau+\taup-s)}\uld{R}_1(\mathbb{L}_{s-\taup}-1)M^{eq}\\
&\Mpp(\tau+2\taup,\cdot)\dot{=}-iM_z(\taup+\tau,\cdot)
&&\uldMpp(\tau+2\taup,\cdot)\dot{=}-i\uld{M}_z(\taup+\tau,\cdot)\\
&M_z(\tau+2\taup,\cdot)\dot{=}\Im(\Mpp(\taup+\tau,\cdot))\dot{=}0
&&\uld{M}_z(\tau+2\taup,\cdot)\dot{=}\Im(\uldMpp(\taup+\tau,\cdot))\dot{=}0,
\end{aligned}
\end{equation*}
\end{itemize}
By the method of characteristics, we obtain 
\begin{equation*}
\begin{aligned}
&\Mphp(a,\vec\zeta)\dot{=}
e^{-\int_{t_0}^a(\Dterm{(\vec\zeta+{\vec k}(a)-\vec k(\sigma))}+R_{2,0}^*)\, d\sigma}\,\Mphp(t_0,\vec\zeta+{\vec k}(a)-\vec k(t_0))\\[1ex]
&\uld{\Mphp}(t,\vec\xi)\dot{=}
e^{-\int_{t_0}^t(\Dterm{(\vec\xi+{\vec k}(t)-\vec k(\sigma))}+R_{2,0}^*)\, d\sigma}\,\uld{\Mphp}(t_0,\vec\xi+{\vec k}(t)-{\vec k}(t_0))
\\&-\int_{t_0}^t e^{-\int_a^t(\Dterm{(\vec\xi+{\vec k}(t)-\vec k(\sigma))}+R_{2,0}^*)\, d\sigma}
[\widehat{\uld{R_2^*}}*{\Mphp}(a)](\vec\xi+{\vec k}(t)-{\vec k}(a))\, da\\[1ex]
&\text{ for }\vec\xi\in\mathbb{R}^3, \quad t>t_0,
\end{aligned}
\end{equation*}
where
\begin{itemize}
\item with $\pp={\pp}_{90^\circ}$:
\begin{equation*}
\begin{aligned}
&t_0=\taup , \quad
\Mphp(t_0,\cdot)\dot{=}-i\widehat{M^{eq}}, \quad
\uld{\Mphp}(t_0,\cdot)\dot{=}-i\widehat{\uld{M}^{eq}}
\end{aligned}
\end{equation*}
\item with $\pp={\pp}_{180^\circ-\tau-90^\circ}$:
\begin{equation*}
\begin{aligned}
&t_0=\tau+2\taup, \quad
\Mphp(t_0,\cdot)\dot{=}
-i\widehat{\mathbb{L}_\tau M^{eq}}, \\
&\uld{\Mphp}(t_0,\cdot)\dot{=}
-i\Bigl(\widehat{\mathbb{L}_\tau \uld{M}^{eq}}
-\mathcal{F}\left[\int_\taup^{\tau+\taup}e^{-\mathcal{A}_1(\tau+\taup-s)}\uld{R}_1(\mathbb{L}_{s-\taup}-1)M^{eq}
\right]\Bigr)
\end{aligned}
\end{equation*}
\end{itemize}
Putting everything together yields 
\begin{equation*}
\begin{aligned}
&\frac{1}{c_{j,0}^-}\Bigl(F_{(I)j}'(x^{img}_{\text{ref}})\uld{x}^{img}\Bigr)(t)= 
\uld{\Mphp}(t,0) \text{ with }\pp={\pp}_{90^\circ}\\
&\dot{=}
-i\,e^{-\int_{\taup}^t(\Dterm{({\vec k}(t)-\vec k(\sigma))}+R_{2,0}^*)\, d\sigma}\,
\widehat{\uld{M}^{eq}}({\vec k}(t)-{\vec k}(\taup))
\\&+i\,\int_{\taup}^t e^{-\int_a^t(\Dterm{({\vec k}(t)-\vec k(\sigma))}+R_{2,0}^*)\, d\sigma}
\int_{\mathbb{R}^3}\widehat{\uld{R_2^*}}({\vec k}(t)-{\vec k}(a)-\zeta)\\
&\qquad\qquad e^{-\int_{\taup}^a(\Dterm{(\vec\zeta+{\vec k}(a)-\vec k(\sigma))} +R_{2,0}^*)\, d\sigma}\,\widehat{M^{eq}}(\vec\zeta+{\vec k}(a)-\vec k(\taup))
\,d\zeta\, da
\end{aligned}
\end{equation*}
\begin{equation*}
\begin{aligned}
&\frac{1}{c_{j,0}^-}\Bigl(F_{(II)j}'(x^{img}_{\text{ref}})\uld{x}^{img}\Bigr)(t)= 
\uld{\Mphp}(t,0) \text{ with }\pp={\pp}_{180^\circ-\tau-90^\circ}\\
&\dot{=}
-i\,e^{-\int_{\tau+2\taup}^t(\Dterm{({\vec k}(t)-\vec k(\sigma))}+R_{2,0}^*)\, d\sigma}\\
&\qquad\qquad\Bigl(\widehat{\mathbb{L}_\tau \uld{M}^{eq}}
-\mathcal{F}\left[\int_\taup^{\tau+\taup}e^{-\mathcal{A}_1(\tau+\taup-s)}\uld{R}_1(\mathbb{L}_{s-\taup}-1)M^{eq}\right]\Bigr)({\vec k}(t)-{\vec k}(\tau+2\taup))
\\&+i\,\int_{\tau+2\taup}^t e^{-\int_a^t(\Dterm{({\vec k}(t)-\vec k(\sigma))}+R_{2,0}^*)\, d\sigma}
\int_{\mathbb{R}^3}\widehat{\uld{R_2^*}}({\vec k}(t)-{\vec k}(a)-\zeta)\\
&\qquad\qquad e^{-\int_{\tau+2\taup}^a(\Dterm{(\vec\zeta+{\vec k}(a)-\vec k(\sigma))} +R_{2,0}^*)\, d\sigma}\,\widehat{\mathbb{L}_\tau M^{eq}}(\vec\zeta+{\vec k}(a)-\vec k(\tau+2\taup))
\,d\zeta\, da
\end{aligned}
\end{equation*}
and thus, with $\zeta'=\zeta+{\vec k}(a)$ and $\vec k(\taup)=\vec k(\tau+2\taup)=0$, leads to  \eqref{FIprimeD0}, \eqref{FIIprimeD0}.
\section*{Acknowledgment}
This research was funded in part by the Austrian Science Fund (FWF) 
[10.55776/F100800]. 

\end{document}